\numberwithin{equation}{section}
\newtheorem{Thm}[equation]{Theorem}
\newtheorem{Prop}[equation]{Proposition}
\newtheorem{Lem}[equation]{Lemma}
\newtheorem{Cor}[equation]{Corollary}
\theoremstyle{remark}
\newtheorem{Rem}[equation]{Remark}
\newtheorem*{Rem*}{Remark}
\newtheorem{Def}[equation]{Definition}
\newtheorem{Ter}[equation]{Terminology}
\newtheorem{Not}[equation]{Notation}
\newtheorem{Exa}[equation]{Example}
\newtheorem{Cons}[equation]{Construction}
\newtheorem{parag}[equation]{}
\newtheorem{Warning}[equation]{Warning}
\theoremstyle{definition}
\newtheorem*{Ack}{Acknowledgements}
\newcommand{\nc}{\newcommand}
\nc{\dmo}{\DeclareMathOperator}
\dmo{\Ab}{Ab}
\dmo{\AbMon}{AbMon}
\dmo{\Aut}{Aut}
\dmo{\bicMack}{\biMack_{\mathsf{ic}}} 
\dmo{\biMack}{\mathsf{Mack}} 
\dmo{\Ch}{Ch}
\dmo{\CoInd}{CoInd}
\dmo{\Der}{D}
\dmo{\End}{End}
\dmo{\Fun}{\mathrm{Fun}} 
\dmo{\Hom}{Hom}
\dmo{\Ho}{Ho}
\dmo{\img}{im}
\dmo{\Img}{Im}
\dmo{\incl}{incl}
\dmo{\Ind}{Ind}
\dmo{\Infl}{Inf}
\dmo{\Defl}{Def}
\dmo{\Iso}{Iso}
\dmo{\Inj}{Inj} 
\dmo{\Ker}{Ker}
\dmo{\Mackey}{Mack} 
\dmo{\Map}{Map}%
\dmo{\Mod}{Mod}
\dmo{\Mor}{Mor}%
\dmo{\Obj}{Obj}
\dmo{\Proj}{Proj} 
\dmo{\pr}{pr}
\dmo{\PsFunJJ}{\PsFun_{\JJ_!}^{\JJ^\prime\textrm{\!-}\mathsf{oplax}}}
\dmo{\PsFunJop}{\PsFun_{{{\JJ}_{{}_{*}}}}}
\dmo{\PsFunJ}{\PsFun_{\JJ_!}}
\dmo{\PsFunoplax}{\PsFun^{\mathsf{oplax}}}
\dmo{\PsFun}{\mathsf{PsFun}} 
\dmo{\Res}{Res}
\dmo{\SH}{SH}
\dmo{\Spanname}{{\sf Span}}
\dmo{\Stab}{Stab}
\dmo{\twoFun}{2\mathsf{Fun}}
\nc\noloc{\nobreak\mspace{6mu plus 1mu}{:}\nonscript\mkern-\thinmuskip\mathpunct{}\mspace{2mu}}
\nc{\ababs}{{\sl ab absurdo}}
\nc{\Add}{\mathsf{Add}}
\nc{\ADD}{\mathsf{ADD}}
\nc{\adhoc}{{\sl ad hoc}}
\nc{\adjto}{\rightleftarrows}
\nc{\adj}{\dashv\,}
\nc{\afortiori}{{\sl a fortiori}}
\nc{\aka}{{a.\,k.\,a.}\ }
\nc{\all}{\mathsf{all}}
\nc{\apriori}{{\sl a priori}}
\nc{\ass}{\mathrm{ass}} 
\nc{\bbA}{\mathbb{A}}
\nc{\bbB}{\mathbb{B}}
\nc{\bbC}{\mathbb{C}}
\nc{\bbD}{\mathbb{D}}
\nc{\bbF}{\mathbb{F}}
\nc{\bbI}{\mathbb{I}}
\nc{\bbM}{\mathbb{M}}
\nc{\bbN}{\mathbb{N}}
\nc{\bbP}{\mathbb{P}}
\nc{\bbQ}{\mathbb{Q}}
\nc{\bbR}{\mathbb{R}}
\nc{\bbZ}{\mathbb{Z}}
\nc{\bs}{\backslash}
\nc{\BurnG}{\cat{A}(G)}
\nc{\cat}[1]{\mathcal{#1}}
\nc{\Cat}{\mathsf{Cat}}
\nc{\CAT}{\mathsf{CAT}}
\nc{\cf}{{\sl cf.}\ }
\nc{\Cf}{{\sl Cf.}\ }
\nc{\colim}{\mathop{\mathrm{colim}}}
\nc{\costar}{**}
\nc{\co}{{\mathrm{co}}}
\nc{\DD}{\cat{D}}
\nc{\Displ}{\displaystyle}
\nc{\doublequot}[3]{#1\backslash #2/#3}
\nc{\Ecell}{\rotatebox[origin=c]{90}{$\Downarrow$}} 
\nc{\eg}{{\sl e.g.}\ } 
\nc{\Eg}{{\sl E.g.}\ } 
\nc{\eps}{\varepsilon}
\nc{\equalby}[1]{\overset{\textrm{#1}}{=}}
\nc{\exact}{\mathsf{ex}}
\nc{\faithful}{\mathsf{faithful}}
\nc{\faith}{\mathsf{faithf}}
\nc{\final}{\textrm{\scriptsize{\ding{93}}}} 
\nc{\Funadd}{\Fun_{\amalg}}
\nc{\Funplus}{\Fun_{+}}
\nc{\fun}{\mathrm{fun}} 
\nc{\GG}{\mathbb{G}}
\nc{\gpdG}{{\groupoidf_{\!\smallslash\!G}}} 
\nc{\gpdGfuz}{{\groupoid{}^{\smallfaithful,\mathsf{fus}}_{\!\smallslash\!G}}}
\nc{\fuz}{\mathsf{fus}}
\nc{\ssetfuz}{\sset^{\smallfused}} 
\nc{\gpd}{\groupoid}
\nc{\GinG}{{\groupoidf_{G}}}
\nc{\gps}{\mathsf{groups}} 
\nc{\groconn}{\groupoid_{\mathsf{conn}}}
\nc{\groupoidf}{\groupoid{}^{\smallfaithful}}
\nc{\groupoid}{\mathsf{gpd}}
\nc{\group}{\mathsf{gr}} 
\nc{\Gsets}{G\mathsf{-sets}}
\nc{\HGfK}{\doublequot{H}{G}{f(K)}}
\nc{\HGK}{\doublequot HGK}
\nc{\Homcat}[1]{\Hom_{\cat #1}}
\nc{\hooklongleftarrow}{\longleftarrow\joinrel\rhook}
\nc{\hooklongrightarrow}{\lhook\joinrel\longrightarrow}
\nc{\hook}{\hookrightarrow}
\nc{\Hsets}{H\mathsf{-sets}}
\nc{\ICAdd}{\Add_{\mathsf{ic}}}
\nc{\ICADD}{\ADD_{\mathsf{ic}}}
\nc{\Idcat}[1]{\Id_{\cat{#1}}}
\nc{\id}{\mathrm{id}}
\nc{\Id}{\mathrm{Id}}
\nc{\ie}{{\sl i.e.}\ }
\dmo{\Image}{\mathsf{Im}}
\nc{\into}{\mathop{\rightarrowtail}}
\nc{\inv}{^{-1}}
\nc{\Iout}[1]{\Ivo{\sout{#1}}}
\nc{\isocell}[1]{\undersett{ #1}{\overset{\sim}{\Ecell}}} 
\nc{\Isocell}[1]{\undersett{ #1}{\overset{\sim}{\Longrightarrow}}}
\nc{\isoEcell}{\overset{\sim}{\Rightarrow}} 
\nc{\isotoo}{\stackrel{\sim}\longrightarrow}
\nc{\isoto}{\buildrel \sim\over\to}
\nc{\Ivo}[1]{{\color{OliveGreen}#1}}
\nc{\JJ}{\mathbb{J}}
\nc{\kk}{\Bbbk}
\nc{\KK}{\mathrm{KK}}
\nc{\leps}{{}^{\ell}\eps}
\nc{\leta}{{}^{\ell}\eta}
\nc{\loccit}{{\sl loc.\ cit.}}
\nc{\lotoo}[1]{\overset{#1}{\,\longleftarrow\,}}
\nc{\loto}[1]{\overset{#1}{\leftarrow}}
\nc{\lto}{\leftarrow}
\nc{\lun}{\mathrm{lun}} 
\nc{\Mackintro}[1]{(Mack\,\ref{Mack-#1-intro})}
\nc{\Mack}[1]{(Mack\,\ref{Mack-#1})}
\nc{\Mid}{\,\big|\,}
\nc{\MMod}{\text{-}\!\Mod}%
\nc{\MM}{\cat{M}}
\nc{\Muniv}{\cat{M}_{\mathsf{univ}}}
\nc{\Ncell}{\rotatebox[origin=c]{0}{$\Uparrow$}} 
\nc{\NEcell}{\rotatebox[origin=c]{135}{$\Downarrow$}} 
\nc{\NN}{\cat{N}}
\nc{\NWcell}{\rotatebox[origin=c]{-135}{$\Downarrow$}} 
\nc{\oEcell}[1]{\overset{\scriptstyle #1}{\Ecell}} 
\nc{\oWcell}[1]{\overset{\scriptstyle #1}{\Wcell}} 
\nc{\ointo}[1]{\overset{#1}{\rightarrowtail}}
\nc{\olto}[1]{\overset{#1}\lto}
\nc{\onto}{\mathop{\twoheadrightarrow}}
\nc{\op}{{\mathrm{op}}}
\nc{\otoo}[1]{\overset{#1}{\,\longrightarrow\,}}
\nc{\oto}[1]{\overset{#1}\to}
\nc{\Paul}[1]{{\color{Blue}#1}}
\nc{\pih}[1]{\tau_{1}#1}
\nc{\Pout}[1]{\Paul{\sout{#1}}}
\nc{\PsFunJindex}{\PsFun_{{\JJ_!}} \ \ {{\JJ}_{!}}\textrm{-strong pseudo-functors}}
\nc{\qquadtext}[1]{\qquad\textrm{#1}\qquad}
\nc{\quadtext}[1]{\quad\textrm{#1}\quad}
\nc{\ra}{\rightarrow}
\nc{\Real}{\mathrm{Re}}
\nc{\reps}{{}^{r\!}\eps}
\nc{\restr}[1]{{|_{\scriptstyle #1}}}
\nc{\reta}{{}^{r\!}\eta}
\nc{\run}{\mathrm{run}} 
\nc{\Sad}{\mathsf{Sad}}
\nc{\SAD}{\mathsf{SAD}}
\nc{\sbull}{{\scriptscriptstyle\bullet}}
\nc{\Scell}{\rotatebox[origin=c]{0}{$\Downarrow$}} 
\nc{\SEcell}{\rotatebox[origin=c]{45}{$\Downarrow$}} 
\nc{\SET}[2]{\big\{\,#1\Mid#2\,\big\}}
\nc{\set}{\mathsf{set}} 
\nc{\Set}{\mathsf{Set}}
\nc{\smallfaithful}{\mathsf{f}}
\nc{\smallfused}{\mathsf{fus}}
\nc{\ssetfused}{\textrm{-}\underline{\set}} 
\nc{\smallslash}{{}^{\scriptscriptstyle/}}
\nc{\smat}[1]{\left(\begin{smallmatrix} #1 \end{smallmatrix}\right)}
\nc{\spanG}{{\widehat{\mathsf{gp}\,\,}\!\!\mathsf{d}}{}^\smallfaithful_{\!{}^{\scriptscriptstyle/}\!G}}
\nc{\Spanhat}{\textrm{\sf S}\widehat{\textrm{\sf pan}}} %
\nc{\Span}{\Spanname}
\nc{\spancat}{\mathrm{Sp}} 
\nc{\spank}{\spancat_{\kk}}
\nc{\biset}{\mathsf{biset}} 
\nc{\bisetcat}{\mathrm{Bis}} 
\nc{\bisetcatbif}{\mathrm{Bis}^\mathrm{bif}_\kk\!} 
\nc{\bisetcatrf}{\mathrm{Bis}^\mathrm{rf}_\kk\!} 
\nc{\bisetfun}{\mathrm{BisF}} 
\nc{\bifree}{\mathsf{bif}}
\nc{\rfree}{\mathsf{rf}}
\nc{\lfree}{\mathsf{lf}}
\nc{\sset}{\textrm{-}\set}
\nc{\str}{\mathsf{str}}
\nc{\SWcell}{\rotatebox[origin=c]{-45}{$\Downarrow$}} 
\nc{\too}{\mathop{\longrightarrow}\limits}
\nc{\tSpan}{\pih{\Spanname}}
\nc{\undersett}[1]{\underset{\scriptstyle #1}}
\nc{\un}{\mathrm{un}} 
\nc{\vcorrect}[1]{{\vphantom{\vbox to #1em{}}}}
\nc{\Wcell}{\rotatebox[origin=c]{90}{$\Uparrow$}} 
\nc{\what}[1]{\widehat{\cat{#1}}}
\nc{\xra}{\xrightarrow}
\begin{document}


\title[Axiomatic representation theory via groupoids]{Axiomatic representation theory of finite groups by way of groupoids
}
\author{Ivo Dell'Ambrogio}
\date{\today}

\address{
\noindent  Univ.\ Lille, CNRS, UMR 8524 - Laboratoire Paul Painlev\'e, F-59000 Lille, France 
}
\email{ivo.dell-ambrogio@univ-lille.fr}
\urladdr{http://math.univ-lille1.fr/$\sim$dellambr}

\begin{abstract} \normalsize
We survey several notions of Mackey functors and  biset functors  found in the literature and prove some old and new theorems comparing them. While little here will surprise the experts, we draw a conceptual and unified picture by making systematic use of finite groupoids. This provides a `road map' for the various approaches to the axiomatic representation theory of finite groups,  as well as some details which are hard to find in writing.
\end{abstract}

\thanks{Author partially supported by Project ANR ChroK (ANR-16-CE40-0003) and Labex CEMPI (ANR-11-LABX-0007-01).}

\subjclass[2010]{20J05, 18B40, 55P91}
\keywords{Mackey functor, biset functor, groupoid, 2-category, span bicategory.}

\maketitle


\tableofcontents

\section{Introduction and results}
\label{sec:intro}%

This is a survey of several variants of Mackey functors and biset functors for finite groups appearing in the literature. (Beware: we survey \emph{abstract formalisms}; the reader interested in \emph{concrete examples} is referred to~\cite{Webb00}.) Our goal is to show that it becomes quite easy to relate these variants to one another and to rigorously prove comparison theorems, provided one embraces \emph{finite groupoids} (categories with finitely many arrows all of which are invertible) and \emph{2-categories} (categories equipped with 2-morphisms, \ie arrows between arrows). The main reason for using finite groupoids is because they include all finite groups as well as all finite $G$-sets for each group~$G$; the main reason for using the language of 2-categories is in order to exploit the fact that finite groupoids, together with functors and natural transformations, form a very nicely behaved 2-category.

Let us begin with a quick review of what is often referred to as ``axiomatic representation theory''. Roughly speaking, both Mackey functors and biset functors provide ways of encoding the various homomorphisms that arise in  the representation theory of finite groups when one allows the group to vary. One typically encounters \emph{restriction} maps and \emph{induction} (also called \emph{transfer} or \emph{trace}) maps associated with inclusions $H\hookrightarrow G$ of subgroups, and possibly also \emph{inflation} and \emph{deflation} maps, associated with quotients $G\twoheadrightarrow G/N$ by normal subgroups. There may also be \emph{isomorphism} maps coming from abstract isomorphisms of groups $G\cong G'$, or at least the special case of \emph{conjugation} maps induced by conjugations $G\cong gGg^{-1}$ by an element~$g$ of some fixed `ambient' group of which $G$ is a subgroup. These families of maps interact by various (long) lists of basic relations, which are then promoted to the role of axioms of a formal algebraic theory.

Note that the above families of homomorphisms come in pairs of opposite variance: restriction/induction and inflation/deflation, with isomorphisms and conjugations having both variances since they are invertible. A classical idea, due to Lindner \cite{Lindner76}, is to simultaneously encode both variances in some category of `spans', \ie diagrams of the form~$X \gets S \to Y$ where the `wrong-way' maps will induce the `wrong-way' functoriality (variations on this idea abound in mathematics, see \eg the many uses of \emph{correspondences} in geometry). Another way to impose this symmetry is by using bisets, as proposed by Bouc~\cite{Bouc10}. Then a Mackey functor, resp.\ a biset functor, can be simply defined to be a linear representation of the category of spans, resp.\ of bisets.

\begin{Warning} \label{Warning:Mackey}
Our usage of `Mackey functors' for functors defined on any kind of span category originates in \cite{Lindner76} and is now quite widespread. However, it is at odds with the tradition in representation theory, where the qualifier `Mackey' is typically reserved for functors with restriction and induction maps but no inflations or deflations (if they have inflation maps, for instance, they will be called `inflation functors', without the qualifier `Mackey'). The two uses seem hard to reconcile, in particular with respect to the global variants surveyed below.
\end{Warning}

Let us first review Mackey functors, using groupoids. 
%

\bigbreak 
\noindent\textbf{Mackey functors}\\  \nopagebreak 
\vspace{-.2cm}

\noindent The unifying point of view we adopt here is that a \emph{Mackey functor} $M$ should be defined to be an abelian-groups-valued (or later, more generally, taking values in modules over some commutative ring~$\kk$) additive functor on the category of spans formed in a suitable 2-category of finite groupoids (see \Cref{sec:Mackey-for-2-1-category}). Let us insist straight away that the additivity condition, $M(G_1 \sqcup G_2)\cong M(G_1) \oplus M(G_2)$, implies that the \emph{data} of a Mackey functor can always be reduced to what happens to indecomposable groupoids, \ie good old finite groups. Nevertheless, it is convenient to work with groupoids because they make definitions more conceptual and results easier to see and to prove.

We allow two parameters in this definition of Mackey functor. Firstly, the above-mentioned `suitable' 2-category of groupoids, denoted below by~$\GG$, can be adjusted as needed: it will typically be either a (2-full) sub-2-category of the 2-category of all finite groupoids, or a comma 2-category over a fixed group(oid).
Secondly, we further choose a distinguished (wide) sub-2-category $\JJ\subseteq \GG$ which determines which functors of groupoids are allowed to induce `wrong way' maps (\eg inductions or deflations). By choosing the pair $(\GG;\JJ)$ adequately, the resulting notion of Mackey functor can be specialized to those in common use. By way of illustration, we will explicitly consider five of them (leaving further variations to the interested reader):
\begin{enumerate}[(1)]
\item \label{it:Mackey-fixedG}
The original \emph{Mackey functors for a fixed group~$G$} \cite{Green71} \cite{Dress73}. They are equipped with: restriction maps, induction maps and conjugations in~$G$. They appear all over equivariant mathematics, perhaps most notably in equivariant stable homotopy theory, as the algebraic structure with which the homotopy groups of `genuine' $G$-spectra are naturally endowed (\cite{LewisMaySteinbergerMcClure86} \cite{Carlsson92}).  
\item \label{it:Mackey-globaltot}
\emph{Global Mackey functors} in the sense of \cite{Ganter13pp} and \cite{Nakaoka16} \cite{Nakaoka16a}. These are defined on all finite group(oid)s and have maps of each kind (that is: restrictions, inductions, all isomorphisms, inflations and deflations).
\item \label{it:Mackey-globalnoinfl}
Global Mackey functors as above, but without deflation maps. These have been given various names: \emph{functors with regular Mackey structure} \cite{Symonds91}, \emph{inflation functors} \cite{Webb93}, and \emph{global $(\emptyset,\infty)$-Mackey functors} \cite{Lewis99}. They appear, for instance, as the natural algebraic invariant of Schwede's global equivariant spectra (see \cite[Thm.\,4.2.6]{Schwede18} and the discussion after it).
\item \label{it:Mackey-globalnoinfldefl}
Global Mackey functors as above, but without inflation and deflation maps. These are simply called \emph{global Mackey functors} in \cite{Webb93} and \cite{Bouc10} (\cf \Cref{Warning:Mackey}).
\item \label{it:Mackey-fused}
The \emph{fused Mackey functors for~$G$} of \cite{Bouc15}, also called \emph{conjugation invariant Mackey functors} in~\cite{HambletonTaylorWilliams10}, namely those Mackey functors for~$G$ as in~\eqref{it:Mackey-fixedG} which admit a reformulation as a kind of biset functors (\cf \Cref{Rem:biset(G)}).
\end{enumerate}
As we will prove, the above five types of Mackey functors can be obtained by specializing our general definition to the following choices of the parameters $(\GG;\JJ)$, respectively:
\begin{enumerate}
\item[For \!\eqref{it:Mackey-fixedG}] : use $(\GG;\JJ)=(\gpdG;\all)$, where $\gpdG$ is the comma 2-category of groupoids faithfully embedded in~$G$ and $\JJ=\all$ just means that $\JJ=\GG$.  See \Cref{sec:G-Mackey}.
\item[For \!\eqref{it:Mackey-globaltot}] : use $(\GG;\JJ)=(\gpd;\all)$, where we take the whole 2-category $\gpd$ of all finite groupoids and functors between them and where we allow the formation of all spans. See \Cref{Def:Mackeyfun-global}.
\item[For \!\eqref{it:Mackey-globalnoinfl}] : use $(\GG;\JJ)=(\gpd;\groupoidf)$, where we consider the whole category of groupoids but we only allow spans $X\gets S\to Y$ whose right leg $S\to Y$ is a faithful functor. See \Cref{Exa:gpd-gpdf}.
\item[For \!\eqref{it:Mackey-globalnoinfldefl}] : use $(\GG;\JJ)=(\groupoidf;\all)$, where both legs of all spans must be faithful functors. See \Cref{Exa:gpdf}.
\item[For \!\eqref{it:Mackey-fused}] : use $(\GG;\JJ)=(\gpdGfuz;\all)$, where $\gpdGfuz$ is the `fused' variant of the comma 2-category of groupoids faithfully embedded in~$G$. See \Cref{Def:gpdG-fused}.
\end{enumerate}

As the reader may guess, the above 2-categories are all related by evident inclusion and forgetful 2-functors. We will exploit this fact in order to easily establish comparison results.

In order to provide a uniform and conceptual construction of the span category for all of the above examples (and many more), we introduce the general notion of a \emph{spannable pair} $(\GG;\JJ)$ (see \Cref{Def:spannable}). A spannable pair consists of an extensive (2,1)-category $\cat E$ equipped with a suitably closed 2-subcategory~$\JJ$ and sufficiently many \emph{Mackey squares} (\ie pseudo-pullbacks of groupoids). 
This abstract approach is developed in~\Cref{sec:Mackey-for-2-1-category}. 
In \Cref{sec:presentation}, we look in full details at the span category for the basic example~\eqref{it:Mackey-globaltot}, in order to dispel the (possibly intimidating) categorical abstractions of the general definition by reducing it to some classical combinatorics. In particular, we describe a presentation of the linear category of spans of groupoids (see \Cref{Thm:pres-Span}).

Of course, we also need to explain how to connect the above definitions with the more familiar ones found in the literature. We will now briefly explain how to do this, beginning with \eqref{it:Mackey-fixedG} and~\eqref{it:Mackey-fused} and the associated comma 2-categories.
%
%

\bigbreak
\noindent\textbf{From $G$-sets to groupoids: the transport groupoid}\\  \nopagebreak 
\vspace{-.2cm}

\noindent 
Mackey functors for a fixed group $G$, as in type~\eqref{it:Mackey-fixedG} above, are typically expressed in terms of $G$-sets.
The key tool for comparing Mackey functors for a fixed~$G$ with global Mackey functors is the \emph{transport groupoid functor} $G\ltimes-$, which sends a $G$-set $X$ to its \emph{transport groupoid} (a.k.a.\ \emph{action groupoid}, \emph{homotopy quotient} or \emph{Grothendieck construction}) $G\ltimes X$. The latter groupoid is canonically equipped with a faithful functor $G\ltimes X \rightarrowtail G$, which turns the transport groupoid construction into a functor 
\[ 
G\ltimes - \colon G\sset\longrightarrow \gpdG 
\]
from the category of $G$-sets into the comma 2-category of groupoids `faithfully embedded' in~$G$ (\Cref{Def:gpdG}). This functor is a nice inclusion, in fact it is a biequivalence (an equivalence of 2-categories). As a consequence, Mackey functors for $G$, which can be defined to be linear representations of the category of spans in $G$-sets, turn out to be equivalent to representations of the category of spans in~$\gpdG$. Therefore they are the result of specializing our general notion of Mackey functors to the pair $(\GG;\JJ)=(\gpdG;\all)$ (see \Cref{Cor:equivalence-MackeyG}). 
All of this is already contained in \cite[\S\,B.1]{BalmerDellAmbrogio18pp} but is briefly recalled at the beginning of \Cref{sec:G-Mackey} for the reader's convenience.

The `$G$-local' and the `global' settings are now compared by the forgetful 2-functor $\gpdG\to \gpd$ which simply forgets the embedding $H\rightarrowtail G$ into~$G$. This 2-functor is not 2-full, because `being over~$G$' puts a constraint on the natural isomorphisms between (faithful) functors that can be used in the comma 2-category, while in $\gpd$ we can use all of them. 
If we pull back these extra 2-cells and add them to~$\gpdG$, we obtain a variant of the comma 2-category which we denote~$\gpdGfuz$ and call the 2-category of \emph{fused groupoids embedded into~$G$} (see \Cref{Def:gpdG-fused}). The 2-cells can be pulled further back onto the category of $G$-sets, which results in a 2-category $G\ssetfuz$ consisting of finite $G$-sets, $G$-maps and \emph{twisting maps} relating parallel $G$-maps (see \Cref{Def:G-set-fuz}). If we truncate the 2-category $G\ssetfuz$, the result is precisely Bouc's category $G\ssetfused$ of fused Mackey functors for~$G$. By definition, fused Mackey functors (type \eqref{it:Mackey-fused} above) are representations of spans in~$G\ssetfused$. 

It follows that fused Mackey functors can be recovered as the Mackey functors for the 2-category $\GG=\JJ = \gpdGfuz \simeq G\ssetfuz$ (see \Cref{Cor:fusedMackeyfun_vs_gpdGfuz-Mackeyfun}). The above arguments make it also easy to identify fused Mackey functors as those Mackey functors $M$ for $G$ which are \emph{conjugation invariant}, \ie such that the centralizer $C_G(H)$ acts trivially on the value $M(H)$ for every $H\leq G$.

This is all explained in \Cref{sec:G-Mackey}.

\bigbreak 
\noindent\textbf{Biset functors}\\  \nopagebreak 
\vspace{-.2cm}

\noindent
As already mentioned, an alternative way to force symmetry on finite group(oid)s is to use bisets rather than spans. By definition, a (finite) \emph{biset}  (also called a \emph{profunctor} or \emph{bimodule})  $U\colon H\to G$ between two groupoids is a functor $U\colon H^{\op}\times G\to \set$ to (finite) sets\footnote{Here the symmetry appears because every $(G,H)$-biset can be turned into an $(H,G)$-biset by precomposing with the inverse-arrows isomorphisms $(-)^{-1}\colon G^\op\overset{\sim}{\to} G$ and $H\overset{\sim}{\to} H^\op$.}. Taken up to isomorphism and composed by tensor products (coends), bisets are the morphisms of a category with arbitrary finite direct sums. 
Similarly to Mackey functors, and following Bouc~\cite{Bouc10}, we define here a \emph{biset functor} to be an additive functor on some suitable (sub)category of bisets of groupoids. Here too additivity allows us to reduce everything to finite groups, hence in particular our definition of biset functors is equivalent to Bouc's definition, which only uses groups; but again, we want to keep all groupoids as they provide direct sums and allow us to define the realisation of spans  (see below) in a natural way. 

Just as with spans, also with bisets there are parameters we can twiddle: we can restrict the allowed class of objects (groupoids), or the allowed class of morphisms (bisets).  We leave variations of the former kind to the interested reader. For the latter, we will study the following common three choices:
\begin{enumerate}[\rm(1)]
\item[$(2)'$] Allow all bisets.
\item[$(3)'$] Only allow right-free bisets: the resulting biset functors (called \emph{inflation functors} in \cite{Bouc10}) will have all types of maps except for deflations.
\item[$(4)'$] Only allow bi-free bisets: the resulting biset functors (called \emph{global Mackey functors} in \cite{Bouc10}) will have neither deflations nor inflations.
\end{enumerate} 
Note that Webb \cite[\S8]{Webb00} provides a more combinatorial definition of biset functors, directly in terms of the maps induced by morphisms of groups and their relations (as in the classical definition of Mackey functor for~$G$), without mentioning bisets, and under the name \emph{globally defined Mackey functors} (see \Cref{Rem:Webb's-variations}.)

This is all explained in~\Cref{sec:bisets}.
%

\bigbreak 
\noindent\textbf{The realization of spans as bisets}\\  \nopagebreak 
\vspace{-.2cm}

\noindent
The key tool for comparing Mackey functors and biset functors is the \emph{realization functor $\cat R$}  from spans to bisets 
\[
\cat R\colon \Span(\gpd) \longrightarrow \biset
\]
which sends a groupoid to itself and `realizes' each (abstract) span of functors as a (concrete) biset. This construction, which already exists as a pseudo-functor between the \emph{bi}categories of spans and bisets, was conjectured by Hoffnung \cite[Claim~13]{Hoffnung12}, was foreshadowed by Nakaoka \cite{Nakaoka16} \cite{Nakaoka16a}, and was studied in more details in Huglo's thesis~\cite{Huglo19pp}. We recall its relevant features in~\Cref{Thm:Huglo_bifunctor}. 
The way spans are realized as bisets is actually rather obvious and has been known to category theorists for a long time. What is apparently less known, but crucial for us, is the \emph{(pseudo-) functoriality} of the construction, which holds provided one composes spans using Mackey squares, as we do (see \S\,\ref{parag:isocomma}).

As a consequence of the mere existence of the realization functor~$\cat R$, we can take any biset functor and pre-compose it with~$\cat R$ in order to obtain a global Mackey functor.
By matching the parameter choices for spans and bisets, we then obtain various comparison results involving \eg the global Mackey functors of kind (2), (3) or (4) as above. 

What is interesting here is that, \emph{unless deflation maps are included}, this comparison yields an \emph{equivalence} between the corresponding categories of global Mackey functors and of biset functors   (that is, we have $(3)=(3)'$ and $(4)=(4)'$ but $(2)\neq(2)'$); see \Cref{Cor:no-deflations}. 
A version of this result was proved by Miller \cite{Miller17}; \cf \Cref{Rem:Miller}.
If deflations are included in the package, then the two notions diverge and precomposition with $\cat R$ only yields an inclusion of biset functors as a full reflective subcategory of the corresponding category of global Mackey functors. The image of the latter inclusion consists precisely of those global Mackey functors that satisfy an extra identity called the \emph{deflativity relation}.
 This applies \eg to Mackey and biset functors with all maps, as in kind~(2) above; see \Cref{Cor:Nakaoka-embedding}.
This result is due independently to Ganter \cite[App.\,A]{Ganter13pp} (whose proof uses Webb's description of biset functors) and \cite{Nakaoka16} (whose proof uses, instead of groupoids, a biequivalent 2-category $\mathbb S$ of `variable group actions').
%
%
\bigbreak 
\noindent\textbf{A road map of the formal representation theory of finite groups}\\  \nopagebreak 
\vspace{-.2cm}

\noindent
To sum up, let us collect all of the above in a single picture:
\[
\xymatrix@C=1.4em@R=1.5em{
&& &
*+[F]{\small
 \begin{array}{c}
  \text{Mackey} \\
  \text{functors} \\
  \text{on $\groupoidf$}
 \end{array}
}
\ar@{<-}[d] \ar@{<->}[rrr]^-{\simeq}_-{\textrm{Cor.\ \ref{Cor:no-deflations}}}
& && *+[F]{\small
 \begin{array}{c}
  \text{bifree} \\
  \text{biset} \\
  \text{functors}
 \end{array}
}
\ar@{<-}[d]
\\
*+[F]{\small
 \begin{array}{c}
  \text{Mackey} \\
  \text{functors} \\
  \text{for $G$}
 \end{array}
}
\ar@{<->}[rr]^-{\simeq}_-{\textrm{Cor.\ \ref{Cor:equivalence-MackeyG}}}
 &&
*+[F]{\small
 \begin{array}{c}
  \text{Mackey} \\
  \text{functors}\\
  \text{on $\gpdG$}
 \end{array}
}
 \ar@{<-}[r]
 &
*+[F]{\small
 \begin{array}{c}
  \text{Mackey} \\
  \text{functors on} \\
  \text{$(\groupoid;\groupoidf)$}
 \end{array}
}
 \ar@{<-}[d] \ar@{<->}[rrr]^-{\simeq}_-{\textrm{Cor.\ \ref{Cor:no-deflations}}}
  & &&
*+[F]{\small
 \begin{array}{c}
  \text{right-free} \\
  \text{biset} \\
  \text{functors}
 \end{array}
}
\ar@{<-}[d]
\\
*+[F]{\small
 \begin{array}{c}
  \text{fused} \\
  \text{Mackey} \\
  \text{functors} \\
  \text{for $G$}
 \end{array}
}
\ar@{}[u]|{\cup}
&&  *+[F]{\small
 \begin{array}{c}
  \text{Mackey} \\
  \text{functors} \\
  \text{on $\gpdGfuz$}
 \end{array}
}
 \ar@{<->}[ll]_-{\simeq}^-{\textrm{Cor.\ \ref{Cor:fusedMackeyfun_vs_gpdGfuz-Mackeyfun}}} \ar@{}[u]|{\cup}
& *+[F]{\small
 \begin{array}{c}
  \text{Mackey} \\
  \text{functors} \\
  \text{on $\groupoid$}
 \end{array}
}
\ar@{->}[l]
 &
*+[F]{\small
 \begin{array}{c}
  \text{deflative} \\
  \text{Mackey}\\
  \text{functors}\\
  \text{on $\groupoid$}
 \end{array}
}
\ar@{}[l]|-{\supset}
 &&
*+[F]{\small
 \begin{array}{c}
  \text{biset} \\
  \text{functors}
 \end{array}
}
\ar@{<->}[ll]_-{\simeq}^-{\textrm{Cor.\ \ref{Cor:Nakaoka-embedding}}}
}
\]
Each box is an abelian category of some sort of Mackey or biset functors, where we have indicated the parameter pair $(\GG;\JJ)$ where necessary, with $(\GG;\GG)=:\GG$ for short. The arrows represent exact functors, with equivalences marked by ``$\simeq$'' and fully faithful inclusions by ``$\subset$''.
We recall that $\groupoidf$ is the 2-category of finite groupoids with only faithful functors, $\gpdG$ is the comma 2-category of groupoids faithfully embedded in~$G$ (\Cref{Def:gpdG}), while $\gpdGfuz$ denotes its fused variant (\Cref{Def:gpdG-fused}).

As already partly evoked, and as will become clear in the course of the proofs, the above diagram of abelian categories is actually the result of taking representation categories on a diagram of bicategories and pseudo-functors, as follows (all notations will be explained in the article):
\[
\xymatrix@C=1.2em@R=1.5em{
&& \Span(\groupoidf) \ar[r]^-{\cat R} \ar[d]
& \biset^{\bifree}\!(\groupoid) {}_{\vcorrect{.8}} \ar@{^(->}[d]
& \ \biset^{\bifree}\!(\group) {}_{\vcorrect{.8}} \ar@{_(->}[l]^-{\;\;\;\sim_+} \ar@{^(->}[d]
\\
\Span(G\sset)
 \ar[r]_-{\simeq}^-{ \underset{\phantom{m}}{\textrm{Prop.\,\ref{Prop:bieq-Gsets-gpdG}} }}
 \ar[d]
  \ar@{}[dr]|{G\ltimes -}
& \Span({\gpdG})
 \ar[r]
 \ar@{}[dr]|-{\mathrm{(forget)}}
  \ar[d]
& \Span(\groupoid;\groupoidf)
  \ar[r]^-{\cat R}
  \ar[d]
& \biset^{\rfree}\!(\groupoid) {}_{\vcorrect{.8}} \ar@{^(->}[d]
& \ \biset^{\rfree}\!(\group) {}_{\vcorrect{.8}} \ar@{_(->}[l]^-{\;\;\;\;\sim_+} \ar@{^(->}[d]
\\
\Span(G\ssetfuz)
\ar[r]^-{\simeq}_-{ \overset{\phantom{m}}{\textrm{Thm.\,\ref{Thm:fusedMackey_vs_gpdGfuz}} }} &
\Span(\gpdGfuz)
 \ar[r]
& \Span(\groupoid) \ar[r]^-{\cat R}_-{ \overset{\phantom{m}}{\textrm{Thm.\,\ref{Thm:Huglo_bifunctor}} }}
& \biset(\groupoid)
& \ \biset(\group) \ar@{_(->}[l]^-{\;\;\;\;\sim_+}
}
\]
More precisely, in order to obtain the first diagram from the second one we must: First, cut down the latter diagram to one of (usual) categories and functors by identifying isomorphic 1-morphisms (\ie by applying the 1-truncation~$\pih$ of \S\,\ref{parag:truncation}). The result is a diagram of pre-additive categories, \ie categories enriched over abelian monoids, and additive functors between them (see \S\,\Cref{parag:k-lin}). Second, we must apply $\Fun_+(-,\Ab)$ throughout, that is we take categories of additive functors into abelian groups. (The arrows marked $\sim_+$ induce equivalences only after this second operation, as they are inclusions in the additive hull; in fact, the resulting equivalences were omitted from the above diagram of abelian categories.)
Note also that the portion of the diagram of bicategories lying to the left of the realization~$\cat R$ results from applying the span-bicategory construction $\Span(-)$ (see \Cref{Rem:span-bicats}) to a suitable diagram of spannable pairs~$(\GG;\JJ)$.

In conclusion, there exists a rich layer of  underlying 2-categorical  information behind these well-known categories of Mackey and biset functors. The exploration and mining of this stratum was begun in \cite{BalmerDellAmbrogio18pp} and deserves to be taken further.

\begin{Rem}
The literature on Mackey and biset functors for finite groups is rather vast and we have not attempted to list all variations on the theme, nor have we tried to assign historical precedence. Some history of the subject can be found in \cite{Webb00} and \cite[\S\,1.4]{Bouc10}. 
\end{Rem}

\begin{Rem}
This article is an offshoot of \cite{BalmerDellAmbrogio18pp}, which developed the basic theory of \emph{Mackey 2-functors}, a categorified version of Mackey functors whose values are additive categories instead of abelian groups. 
We have nonetheless strived to make this article self-contained. 
Indeed, while the use of groupoids and 2-categories arose quite naturally in the categorified context of \cite{BalmerDellAmbrogio18pp}, it is our hope that the present article will show~-- even to readers who do not particularly care for Mackey \underline{2-}functors~-- how the groupoidal viewpoint offers a useful organizing principle for the \emph{usual}, merely abelian-group-valued Mackey functors. Conversely, we also hope that this survey may function as a guide and point of entry into the Mackey literature for those who are already fluent with 2-categories.
\end{Rem}

\begin{Not}
We will work over an arbitrary commutative ring with unit, denoted by~$\kk$. Common choices are the ring of integers, a field, or a nice local ring. 

To be consistent with \cite{BalmerDellAmbrogio18pp}, a span $X\gets S\to Y$ will be always visually understood as going from left to right, \ie as a morphism $X\to Y$; to be consistent with \cite{Bouc10}, a biset ${}_GX_H$ will always be understood as going from right to left, \ie as a morphism $H\to G$
(both are mere conventions). This will make it slightly awkward in \Cref{sec:bisets} where we compare spans and bisets.
\end{Not}

\begin{Ack}
We are grateful to Paul Balmer and Serge Bouc for their interest and for many useful comments on previous versions of this article. We would also like to thank an anonymous referee for their careful reading.
\end{Ack}

\section{Categorical preliminaries}
\label{sec:prelim}%

We collect here some generalities, mostly to fix our terminology (which is standard and consistent with~\cite{BalmerDellAmbrogio18pp}). Categorically confident readers, or those familiar with~\cite{BalmerDellAmbrogio18pp}, should skip ahead to Section~\ref{sec:Mackey-for-2-1-category} and refer back only if necessary.

\begin{parag} \textbf{Groupoids.}
\label{parag:groupoids}
A \emph{groupoid} is a category where all morphisms are invertible. We will identify a group $G$ with the groupoid having a single object~$\bullet$ whose endomorphism monoid is $\End(\bullet)=G$. Under this identification, a homomorphism $f\colon G\to G'$ between groups is the same thing as a functor. 

We will only consider groups and groupoids which are \emph{finite}, that is, which only have finitely many objects and arrows. 

Recall that a groupoid is \emph{connected} if all its objects are isomorphic, in which case the groupoid is equivalent (as a category) to the full subcategory on any one of its objects, which is just a group. Thus every (finite) groupoid~$G$ is equivalent to a (finite) disjoint union of (finite) groups, with the equivalence depending on a chosen set of representative objects for all connected components.
\end{parag}

\begin{parag}\textbf{Bicategories and 2-categories.}
\label{parag:bicats}
(See \eg \cite[\S\,A.1]{BalmerDellAmbrogio18pp}.) A \emph{2-category} $\mathcal C$ is a category enriched in categories, \ie it consists of a collection $\Obj \mathcal C$ of objects (or \emph{0-morphisms}, \emph{0-cells}), together with `Hom' categories $\mathcal C(X,Y)$ for all pairs of objects $X,Y$ (whose objects are the \emph{1-morphisms} or \emph{1-cells} $u\colon X\to Y$ of~$\mathcal C$ and whose arrows are its \emph{2-morphisms} or \emph{2-cells} $\alpha\colon u\Rightarrow v$), and composition functors $\circ= \circ_{X,Y,Z}\colon \mathcal C(Y,Z)\times \mathcal C(X,Y)\to \mathcal C(X,Z)$ subject to the usual (strict) unit and associativity equations. In particular, each object has a left-and-right identity 1-morphism~$\Id_X$.
Our first example is the 2-category 
\[ \gpd \]
consisting of finite groupoids, functors between them, and (necessarily invertible) natural transformations. 

\begin{Not} \label{Not:gamma_x}
Given two parallel homomorphisms $f_1,f_2\colon G\to H$ between groups, considered as functors between one-object groupoids (\S\,\ref{parag:groupoids}), a natural transformation $f_1\Rightarrow f_2$ (a 2-cell in~$\gpd$) is completely determined by its unique component $x\colon \bullet \overset{\sim}{\to}\bullet$, which is an element $x\in H$ such that ${}^xf_1=f_2$ (that is: $xf_1(g)x^{-1}=f_2(g)$ for all $g\in G$). We will write $\gamma_x\colon f_1\Rightarrow f_2$ for this 2-cell.
\end{Not}

A \emph{bicategory}~$\mathcal B$ is a `relaxed' version of a 2-category, where the unit and associativity axioms only hold up to given coherent natural isomorphisms $\Id_X \circ u \cong u \cong u \circ \Id_X$ and $(w \circ v)\circ u\cong w \circ (v \circ u)$. Here `coherent' means that all reasonable diagrams involving these isomorphisms must commute; as a consequence, each bicategory can effectively be replaced by a 2-category which is biequivalent  (see below) to it. 

In any bicategory, the composition of 2-cells within each Hom category is called \emph{vertical} composition, while the effect of applying the composition functors $\circ_{X,Y,Z}$ to 1- or 2-morphisms is called \emph{horizontal} composition. This is reflected by the usual layout in the `cellular' diagram notation for a 2-morphism:  
\[
\xymatrix@R=0pt{
&& \\
X \ar@/^3ex/[rr]^-u \ar@/_3ex/[rr]_-v \ar@{}[rr]|-{\Scell \,\alpha} && Y \\
&&
}
\]
A 2-category is the same thing as a bicategory which is \emph{strict}, that is whose unit and associativity isomorphisms are identity maps. An (ordinary) category, or \emph{1-category}, can be seen as a \emph{discrete} 2-category, that is one whose Hom categories only have identity arrows (hence `are' just sets).
\end{parag}

\begin{Rem} \label{Rem:2-cat-check}
It is often straightforward to directly check that some given data defines a 2-category, \cf \cite[\S\,XII.3]{MacLane98}. For a non-strict \emph{bi}category, on the other hand, some more work may be required.
\end{Rem}

\begin{Exa}[{See e.g.\ \cite[\S7.8]{Borceux94a}}] 
\label{Def:biset_functor_gpd}
Let $G$ and~$H$ be finite groupoids. A \emph{finite biset (or bimodule, distributor, profunctor) of groupoids}, written $U= {}_GU_H \colon H\to G$, is a functor $U \colon H^{\op}\times G\to \set$ to the category of finite sets. There is a bicategory $\biset(\groupoid)$ with finite groupoids as objects, finite bisets $H\to G$ as 1-morphisms from $H$ to~$G$, and natural transformations between them as 2-morphisms.
The horizontal composition of two bisets ${}_GU_H$ and ${}_KV_G$ is given by their tensor product (\ie coend)
\[
V \otimes U
= {}_KV \underset{G}{\otimes}\ U_H
:= \int^{g\in G} V(g,-) \times U(-,g) \colon H^\op\times K \longrightarrow \set\,.
\]
Concretely, the image under $V\otimes U$ of $(h,k)\in \Obj (H^\op\times K)$ is the quotient set
\[
(V\otimes U)(h,k) = \frac{\coprod_{g\in \Obj G} V(g, k) \times U(h,g)}{(\beta \varphi, \alpha)\sim (\beta, \varphi \alpha) \;\; \textrm{ if } \varphi \in G(g_1,g_2) }
\; \in \; \Obj (\set) \,.
\]
Natural transformations between bisets induce maps on these quotients, and this defines the functoriality on maps of horizontal composition.  
\end{Exa}

\begin{Exa} \label{Def:biset_functor_gps}
If the groupoids $G$ and $H$ are just groups, to give a biset ${}_GU_{H}$ as in \Cref{Def:biset_functor_gpd} is the same thing as to give a $G,H$-biset in the sense of \cite{Bouc10}, that is, a set $U$ together with a left action by $G$ and a right action by~$H$ which commute: $(g \cdot u) \cdot h = g \cdot (u\cdot h)$. 
By restricting attention to bisets between groups, we obtain a 1-full and 2-full sub-bicategory of $\biset(\groupoid)$ that we denote by $\biset(\group)$.
\end{Exa}

\begin{parag}\textbf{Internal adjunctions and equivalences.}
\label{parag:internal-adj-equiv}
Two 1-morphisms $\ell\colon X\to Y$ and $r\colon Y\to X$ in a bicategory~$\cat B$ are \emph{adjoint} if there exist 2-morphisms $\eta\colon \Id_X \Rightarrow r\circ \ell$
and $\varepsilon\colon \ell\circ r \Rightarrow \Id_Y$ such that $(\ell \circ \varepsilon)\circ (\eta\circ \ell) = \id_\ell $ and $(\varepsilon \circ r)\circ (r \circ \eta) =\id_r$. An \emph{adjunction}, sometimes written $\ell\dashv r$, is the data of such a quadruple $(\ell, r, \eta, \varepsilon)$. We say the adjunction is an \emph{adjoint equivalence} if $\eta$ and $\varepsilon$ are (necessarily mutually inverse) isomorphisms. More generally, a 1-morphism $\ell\colon X\to Y$ is an \emph{equivalence} if there exist a 1-morphism $r\colon Y\to X$ and two invertible 2-morphisms $r \ell \cong \id_X$ and $\ell r \cong \id_Y$. 
Every equivalence can be completed to an adjoint equivalence.

Inside the 2-category of all categories, functors and natural transformations, these reduce to the usual notions of adjoint functors and equivalence of categories.
\end{parag}

\begin{parag}\textbf{Pseudo-functors and biequivalences.}
\label{parag:pseudofun}
A useful notion of morphism between two bicategories (or even 2-categories) is that of a pseudo-functor. A \emph{pseudo-functor}  $\cat F\colon \cat B \to \cat C$ consists of an assignment $X\mapsto \cat FX$ between the objects of $ \cat B$ and~$\cat C$, functors $\cat F=\cat F_{X,Y}\colon \cat B(X,Y)\to \cat B(\cat FX, \cat FY)$ between their Hom categories, and specified natural isomorphisms $\Id_{\cat FX} \cong \cat F(\Id_X)$ and $\cat F(v) \circ \cat F(u) \cong  \cat F(v \circ u)$ subject to suitable coherence axioms. 
The correct notion of an equivalence between bicategories is that of a \emph{biequivalence}: a pseudo-functor $\cat F\colon \cat B\to \cat C$ such that there exists another pseudo-functor $\cat G\colon \cat C\to \cat B$ and isomorphisms $\cat F\circ \cat G \cong \Id_{\cat C}$ and $\cat G\circ \cat F \cong \Id_{\cat B}$. Here by isomorphism we mean an invertible \emph{modification}, which is the correct notion of a morphism of pseudo-functors (see~\cite[A.1.14]{BalmerDellAmbrogio18pp}). Equivalently, a pseudo-functor $\cat F\colon \cat B\to \cat C$ is a biequivalence iff each functor $\cat F_{X,Y}$ is an equivalence $\cat B(X,Y)\xrightarrow{\sim}\cat C(\cat FX,\cat FY)$  of Hom categories and moreover each object $Y$ of $\cat C$ is equivalent (in the internal sense of \S\,\ref{parag:internal-adj-equiv})  to one of the form~$\cat FX$. 
\end{parag}

\begin{parag}\textbf{The truncation of a bicategory.}
\label{parag:truncation}
The \emph{1-truncation} (also called \emph{classifying category}) of a bicategory~$\cat B$, denoted~$\pih(\cat B)$, is the ordinary category with the same objects as $\cat B$ and whose morphisms are the isomorphism classes of 1-morphisms of~$\cat B$, with the induced composition. That is, we look at 1-morphisms up to invertible 2-morphisms. This operation is functorial, in that it sends pseudo-functors $\cat F$ to ordinary functors~$\pih \cat F$, and preserves composition and identities.
\end{parag}

\begin{Exa} \label{Exa:truncated-bisets}
Recall from \Cref{Def:biset_functor_gps} the bicategory $\biset(\group)$ of finite groups, finite bisets and morphisms of bisets. Its truncation $\pih \biset(\group)$ is precisely Bouc's category of bisets~$\cat{B}$ of~\cite{Bouc10}, and our conventions for composition are consistent with his. The inclusion 2-functor $\biset(\group) \hookrightarrow \biset(\gpd)$ induces a fully faithful functor $\pih \biset(\group) \hookrightarrow \pih \biset(\gpd)$.
\end{Exa}

\begin{parag}\textbf{Finite coproducts in a bicategory.}
\label{parag:coprods-pb}
Let $\cat B$ be any bicategory. An object $\varnothing $ is \emph{initial} if the unique functor
\[
\cat B(\varnothing,T)
 \overset{\sim}{\longrightarrow} 1
\]
to the final category (one object and one identity arrow) is an equivalence for every object $T\in \cat B$; thus, up to isomorphism there is precisely one 1-morphism $\varnothing \to T$. A diagram $X\xrightarrow{i_X} X\sqcup Y \xleftarrow{i_Y} Y$ of 1-morphisms is a \emph{coproduct} if 
\begin{equation} \label{eq:coprod-equiv}
(i_X^*, i_Y^*)\colon \cat B(X\sqcup Y, T) \overset{\sim}{\longrightarrow} \cat B(X,T) \times \cat B(Y,T)
\end{equation}
is an equivalence; in particular, for all 1-morphisms $u\colon X\to T$ and $v\colon Y\to T$ there is up to isomorphism a unique $(u, v)\colon X\sqcup Y\to T$ such that $(u,v)i_X = u$ and $(u,v)i_Y=v$.
Coproducts can be iterated any finite number of times, with similar uniqueness statements, and an initial object can be understood as the empty coproduct. Finite coproducts are unique only up to equivalence, but in many bicategories there are canonical constructions. The coproducts of $\cat B$ yield coproducts in the truncation~$\pih \cat B$.
 
If we require the above two equivalences to be \emph{isomorphisms} of categories, we obtain the more familiar \emph{strict} initial object and coproducts. In all our examples strict (and canonical) versions will be available, but all constructions will work with the above more relaxed notion, which has the advantage of being stable under biequivalence. Mostly we will ignore the difference.
\end{parag}

\begin{parag}\textbf{Iso-comma squares and Mackey squares.}
\label{parag:isocomma}
(See \cite[\S\,2.1-2]{BalmerDellAmbrogio18pp}.) A central role in this article will be played by certain diagrams which will provide a canonical replacement for Mackey formulas, namely \emph{iso-comma squares} and \emph{Mackey squares}. They are, respectively, a strict and a pseudo version of \emph{homotopy pullbacks}. The second has the added advantage of being stable under biequivalence.
 
In a 2-category (or even a bicategory)~$\cat C$, an invertible 2-cell $\gamma$ of the form
\begin{equation} \label{eq:iso-comma-muster}
\vcenter{
\xymatrix@C=14pt@R=14pt{
& (u/v) \ar[dl]_{p_X} \ar[dr]^-{p_Y} & \\
X \ar[dr]_u \ar@{}[rr]|{\underset{\sim}{\oEcell{\gamma}}} && Y \ar[dl]^v \\
&Z &
}}
\end{equation}
is an \emph{iso-comma square} if it is the 2-universal invertible 2-cell sitting on top of $X\xrightarrow{u} Z \xleftarrow{v} Y$. More precisely, for any other invertible 2-morphism $\sigma$ as on the left-hand side
\begin{equation*}
\vcenter{
\xymatrix@C=14pt@R=14pt{
& T \ar[dl]_{r} \ar[dr]^-{s} & \\
X \ar[dr]_u \ar@{}[rr]|{\underset{\sim}{\oEcell{\sigma}}} && Y \ar[dl]^v \\
&Z &
}}
\quad \quad \quad = \quad \quad \quad
\vcenter{
\xymatrix@C=14pt@R=14pt{
&T \ar@/_3ex/[ddl]_{r} \ar@/^3ex/[ddr]^{s} \ar[d]_-{\exists !}^-{t} & \\
& (u/v) \ar[dl]_{p_X} \ar[dr]^-{p_Y} & \\
X \ar[dr]_u \ar@{}[rr]|{\underset{\sim}{\oEcell{\gamma}}} && Y \ar[dl]^v \\
&Z &
}}
\end{equation*}
there is a unique 1-morphism $t$ such that $r= p_Xt$, $s= p_Yt$ and $\gamma t = \sigma$; we write 
\[ t=\langle r,s, \sigma\rangle \] 
and call $r$, $s$ and $\sigma$ the \emph{components} of~$t$. Moreover, it is also required that for any two parallel 1-morphisms $t,t'\colon T\to (u/v)$, the 2-morphisms $\alpha\colon t\Rightarrow t'$ be in bijection with pairs $(\beta_X\colon p_Xt\Rightarrow p_Xt',\beta_Y\colon p_Yt\Rightarrow p_Yt')$ of 2-morphisms between components, the bijection of course being given by $\alpha \mapsto (p_X\alpha, p_Y\alpha)$. 

Similarly, we call a 2-cell as in \eqref{eq:iso-comma-muster} a \emph{Mackey square}\footnote{Beware that in the literature both iso-comma squares and Mackey squares are sometimes called iso-comma squares, or pseudo-pullbacks, or even just pullbacks.} if it satisfies almost the same 2-universal property as above, with the following difference: for each triple $\langle r,s, \sigma\rangle$, there exist a $t\colon T\to (u/v)$ and two 2-isomorphisms $\varphi\colon r\Rightarrow p_Xt$ and $\psi\colon p_Yt\Rightarrow s$ such that $\sigma=(u\varphi)(\gamma t)(v \psi)$. 
(It follows that such a $t$ is unique up to a non-unique isomorphism.) 

\begin{Exa} \label{Exa:iso-comma-cats}
In the 2-category $\Cat$ of small categories, functors and natural transformations, iso-comma squares have the following canonical construction.
The category $(u/v)$ has for objects all triples $(x,y,\gamma)$ with $x\in \Obj X$, $y\in \Obj Y$ and $\gamma \colon u(x)\overset{\sim}{\to}v(y)$ an isomorphism in~$Z$; a  morphism $(x,y,\gamma)\to (x',y', \gamma')$ is a pair $(\alpha , \beta)$ with $\alpha \in X(x,x')$, $\beta \in Y(y,y')$ and such that $v(\beta) \circ \gamma = \gamma' \circ u(\alpha)$ in~$Z$. The two functors $p_X$ and $p_Y$ are the evident projections $(x,y,\gamma)\mapsto x$ and $(x,y,\gamma)\mapsto y$, and the natural isomorphism $\gamma\colon up_X\Rightarrow vp_Y$ has the `tautological' component $\gamma$ at the object $(x,y,\gamma)$.
If $X,Y,Z$ happen to all be finite groupoids then so is $(u/v)$, hence this construction also provides iso-comma squares for the 2-category~$\gpd$.
\end{Exa}

\begin{Rem}[{See \cite[2.1.11-13]{BalmerDellAmbrogio18pp}}]  \label{Rem:iso-comma-Mack-characterization}
Iso-comma squares and Mackey squares in any 2-category $\cat C$ can be nicely characterized in terms of iso-comma squares of their Hom categories, built in $\Cat$ as in \Cref{Exa:iso-comma-cats}. Namely, consider a 2-cell in~$\cat C$:
\begin{equation} \label{eq:candidate-iso-comma}
\vcenter{
\xymatrix@C=14pt@R=14pt{
& P \ar[dl]_{p} \ar[dr]^-{q} & \\
X \ar[dr]_u \ar@{}[rr]|{\underset{\sim}{\oEcell{\gamma}}} && Y \ar[dl]^v \\
&Z &
}}
\end{equation}
For every object $T\in \Obj \cat C$, we can apply $\cat C(T,-)$ to it in order to obtain the following comparison functor:
\begin{equation} \label{eq:comparison-iso-commas}
\cat C(T, P) \longrightarrow \big(\cat C(T,u) / \cat C(T,v)\big) , \quad \quad  t \longmapsto (pt, qt, \gamma t)
\end{equation}
(this is the unique functor with components $\langle \cat C(T,p) , \cat C(T,q) , \cat C(T,\gamma) \rangle$).
Then, as we see directly from the definitions, \eqref{eq:candidate-iso-comma} is an iso-comma square (resp.\ a Mackey square) iff \eqref{eq:comparison-iso-commas} is an isomorphism of categories (resp.\ an equivalence).
\end{Rem}

\begin{Rem}
Iso-comma squares are, in particular, Mackey squares.
If the iso-comma square over $X \xrightarrow{u} Z \xleftarrow{v}Y$ exists, then a square \eqref{eq:candidate-iso-comma} is a Mackey square iff the comparison functor $\langle p,q,\gamma \rangle $ into the iso-comma square is an equivalence.
\end{Rem}

\end{parag}

\begin{parag}\textbf{Comma 2-categories.}
\label{parag:comma2cat}
Given an object $C$ in a 2-category~$\cat C$, the \emph{comma 2-category} $\cat C_{/C}$ is the 2-category defined as follows (this also works for general bicategories, but we will not need it). Its objects are pairs $(X,p)$ of an object $X$ and a specified 1-cell $p\colon X\to C$ of~$\cat C$. A 1-cell $(X,p)\to (Y,q)$ is a pair $(u,\theta)$ where $u\colon X\to Y$ is a 1-cell and $\theta\colon qu\overset{\sim}{\Rightarrow} p$ an invertible 2-cell of~$\cat C$ (as $\theta$ is invertible, the choice of its direction is a matter of convention). Finally, a 2-cell $(u,\theta)\Rightarrow (u',\theta')$ is a 2-cell $\alpha\colon u\Rightarrow u'$ such that $\theta' (q\alpha) = \theta$ in~$\cat C$:
\[
\vcenter{
\xymatrix@R=8pt{
&& \\
X 
 \ar@/^2ex/[rr]^-u 
  \ar@/_2ex/[rr]_-{u'} 
   \ar@{}[rr]|-{\Scell \,\alpha}
    \ar[dddr]_p & \ar@{}[ddd]|>>>>>>{\SWcell \; \theta'} &
 Y \ar[dddl]^q \\
&& \\
&& \\
&C &
}}
\quad = \quad
\vcenter{
\xymatrix@R=8pt{
& \ar@{}[dddd]|{\SWcell \; \theta} & \\
X 
 \ar@/^2ex/[rr]^-u 
   \ar[dddr]_p &&
 Y \ar[dddl]^q \\
&& \\
&& \\
&C &
}}
\]
The vertical and horizontal compositions in $\cat C_{/C}$ are inherited from those of $\cat C$ in the evident way.
There is a forgetful 2-functor $\cat C_{/C}\to \cat C$ which simply forgets the $p$ and $\theta$ parts.

\begin{Rem} \label{Rem:coprods-comma2cats}
One verifies easily that $\cat C$ admits coproducts (\S\,\ref{parag:coprods-pb}) if and only if $\cat C_{/C}$ does so for every $C\in \cat C$. In this case, the forgetful 2-functor $\cat C_{/C}\to \cat C$ preserves and reflects them, in the sense that a diagram
\[
\xymatrix{
X_1 \ar[r] \ar[dr] &
 X
  \ar@{}[dl]|(.34){{}^{\simeq}\,\SWcell}
   \ar@{}[dr]|(.34){\SEcell\,{}^{\simeq}}
    \ar[d] &
X_2 \ar[l] \ar[dl] \\
& C&
}
\]
is a coproduct in $\cat C_{/C}$ iff the top row is a coproduct in~$\cat C$.
\end{Rem}

\begin{Rem} \label{Rem:pseudofun-for-extensivity}
If $\cat C$ admits coproducts $X\sqcup Y$ of any two objects, then any choice of adjoint quasi-inverses for all the functors \eqref{eq:coprod-equiv} (or no choice at all if the coproducts are strict) defines a pseudo-functor
\[
\cat C_{/X} \times \cat C_{/Y} \longrightarrow \cat C _{/ X \sqcup Y} \,, \quad (A\overset{a}{\to} X, B \overset{b}{\to} Y) \mapsto (A\sqcup B \xrightarrow{a\sqcup b}X\sqcup Y)
\]
in an evident way. Any two choices, of course, yield isomorphic pseudo-functors in a suitable sense.
\end{Rem}
\end{parag}

\begin{parag}\textbf{Additive, semi-additive and $\kk$-linear categories.}
\label{parag:k-lin}
If $\kk$ is a commutative ring with unit, a \emph{$\kk$-linear category} $\cat C$ is a category enriched in $\kk$-modules; this means that each Hom set $\cat C(X,Y)$ carries the structure of a $\kk$-module and the composition maps are all $\kk$-bilinear.

A $\kk$-linear category is \emph{additive} if moreover it has direct sums  (a.k.a.\ biproducts) $X_1\oplus \ldots \oplus X_n$ for every finite set of its objects, including an empty direct sum~$0$, a.k.a.\ a zero object. Direct sums are both categorical product and coproduct diagrams, and morphisms between them are determined by their matrix of components, and can be composed according to matrix multiplication. 
Direct sums can be used to recover the underlying additive monoid structure of each Hom set $\cat C(X,Y)$, since its zero element must be the unique map $x\to 0\to y$ and the sum of $f,g\in \cat C(X,Y)$ must be the composite
\[
f+ g = \Big( \xymatrix{ X \ar[r]^-{({}^1_1)} & X \oplus X \ar[r]^-{\big( {}^f_0 \; {}^0_g\big)} & Y \oplus Y \ar[r]^-{(1\;1)} & Y  } \Big)
\]
(see \cite[VIII.2]{MacLane98} or \cite[A.5]{BalmerDellAmbrogio18pp}).

We call \emph{semi-additive} a category with finite direct sums which is enriched in abelian monoids (in the unique way possible, as above).

A functor $F\colon \cat C\to \cat D$ is \emph{$\kk$-linear} if each component $F_{X,Y}\colon \cat C(X,Y)\to \cat D(FX,FY)$ is a $\kk$-linear map. 
Similarly, if $\cat C$ and $\cat D$ are (only) enriched in abelian monoids, a functor $F\colon \cat C\to \cat D$ is said to be \emph{additive} if it preserves the addition and zero element of each Hom monoid.
Note that additive functors (hence in particular $\kk$-linear ones) always preserve direct sum diagrams when they exist. 

We will denote by $\Fun_\kk(\cat C,\cat D)$ the category of $\kk$-linear functors $\cat C\to \cat D$ and natural transformations. Similarly, if $\cat C,\cat D$ are (only) enriched in abelian monoids, we will denote by $\Funplus(\cat C, \cat D)$ the category of additive functors between them.
\end{parag}

\begin{Cons}[{The $\kk$-linearization}] \label{Cons:kk-linearization}
Let $\cat C$ be a category enriched in additive monoids. Its \emph{$\kk$-linearization} is the $\kk$-linear category 
\[ \kk\cat C \]
 defined as follows. Its objects are the same, $\Obj (\kk\cat C):= \Obj(\cat C)$. Its Hom $\kk$-modules $\kk\cat C(X,Y)$ are obtained by first building the Grothendieck group completion $\cat C(X,Y)^{\pm}$ (the abelian group of formal differences) and then extending scalars: $\kk\cat C(X,Y):= \kk \otimes_\mathbb Z \cat C(X,Y)^{\pm}$. The composition maps of $\kk \cat C$ are the unique $\kk$-bilinear maps extending the composition maps of~$\cat C$ along the canonical maps $\cat C(X,Y)\to \kk\cat C(X,Y)$, $f\mapsto 1\otimes (f-0)$.

The latter canonical maps also define an evident functor $\cat C\to \kk\cat C$ having the universal property that it induces, by precomposition, an isomorphism of categories
\[
\Fun_\kk (\kk\cat C, \cat D) \overset{\sim}{\longrightarrow}  \Funplus (\cat C, \cat D)
\]
for any given $\kk$-linear category~$\cat D$. 

Clearly, if $\cat C$ has direct sums (\ie is semi-additive) then these remain direct sums in the $\kk$-linearization $\kk\cat C$, so that the latter is an \emph{additive} $\kk$-linear category.
\end{Cons}

\begin{Rem}
Note that the above construciton is not the same as the \emph{free} $\kk$-linearization, which can be performed on any category by taking the free $\kk$-module on each Hom set.  It is in fact important for us that we remember the existing (semi-) additive structure already present on our categories of spans and bisets.
\end{Rem}

\section{Mackey functors for (2,1)-categories}
\label{sec:Mackey-for-2-1-category}%

As announced in the introduction, we want to define a Mackey functor to be a linear functor defined on a suitable category of spans of groupoids. Moreover, we wish to allow variations with sub- and comma-2-categories. A natural setting for constructing categories of spans, and covering all such cases, is that of an \emph{extensive (2,1)-category} $\GG$ with enough Mackey squares and equipped with a distinguished sub-2-category $\JJ$ with suitable closure properties (what we call a \emph{spannable pair}; see \Cref{Def:spannable}).
As the next few pages may appear a little abstract, we urge the reader to keep in mind the example $\GG=\JJ=\gpd$ of all finite groupoids, where Mackey squares are provided by the concrete iso-comma construction of \Cref{Exa:iso-comma-cats}, which allow us to check all the following claims by direct computations. This particular example will also be revisited in the next section in much detail. But for now, let us bask in some glorious generality:

\begin{Def} \label{Def:21cat}
A \emph{$(2,1)$-category} is\footnote{Roughly speaking, the general pattern in higher category theory is that an \emph{$(n,k)$-category} has morphisms between morphisms between morphisms etc.\ up to level~$n$, but only those up to level~$k$ are allowed to be non-invertible. Moreover, \emph{$n$-category} is short for $(n,n)$-category (with the notable exception that Lurie \cite{Lurie09} calls  \emph{$\infty$-categories} his models of $(\infty,1)$-categories).} 
a strict 2-category, as in \S\,\ref{parag:bicats}, where moreover all 2-morphisms are invertible. 
\end{Def}

\begin{Def} [{\cite{BungeLack03}}] \label{Def:extensive} 
A 2-category $\cat E$ is \emph{extensive} if it admits all finite coproducts (see \S\,\ref{parag:k-lin}) and if moreover, for any pair of objects $X,Y$ the pseudo-functor 
\begin{equation} \label{eq:comparison-pseudofun-commas}
\cat E_{/X} \times \cat E_{/Y} \overset{\sim}{\longrightarrow} \cat E _{/ X \sqcup Y} \,, \quad (A\overset{a}{\to} X, B \overset{b}{\to} Y) \mapsto (A\sqcup B \xrightarrow{a\sqcup b}X\sqcup Y)
\end{equation}
induced on comma 2-categories (see \S\,\ref{parag:comma2cat} and \Cref{Rem:pseudofun-for-extensivity}) by taking coproducts is a biequivalence (see \S\,\ref{parag:pseudofun}). 
By \cite[Thm.\,2.3]{BungeLack03}, a 2-category with finite coproducts is extensive if and only if: 1) it admits Mackey squares along all coproduct inclusions, and 2) it has the property that in any diagram of the form
\begin{equation} \label{eq:PB-vs-coprod}
\vcenter{
\xymatrix{
T_A \ar[d]_{a} \ar[r] & T \ar[d]^c \ar@{}[dl]|{\simeq} \ar@{}[dr]|{\simeq} & T_B \ar[l] \ar[d]^{b} \\
A \ar[r]^-{i_A} & A\sqcup B & B \ar[l]_-{i_B} 
}}
\end{equation}
the top row is a coproduct (\ie $T_A \sqcup T_B \to T$ is an equivalence) iff the two squares are Mackey squares.
(Beware that in \cite{BungeLack03} Mackey squares are simply called `pullbacks' and the invertible 2-cells are omitted from all such diagrams.)
\end{Def}

\begin{Rem} \label{Rem:extensive-cats}
The notion of extensive category was introduced in \cite{CarboniLackWalters93} to capture, at the categorical level, the intuition of coproducts behaving like set-theoretical disjoint unions. A 1-category is \emph{extensive} if it admits all finite coproducts and if the 1-categorical analogues of \eqref{eq:comparison-pseudofun-commas} are equivalences (or equivalently: if it is extensive when seen as a locally discrete 2-category). Basic examples include elementary toposes such as $\set$ or $G\sset$.
This definition was extended to 2- (and bi-) categories in~\cite{BungeLack03}, with small categories $\Cat$ and (finite) groupoids~$\gpd$ providing basic examples. The definition pins down a certain compatibility between coproducts and (weak) pullbacks, which is what we need here in order to form nice categories of spans (\cf also \cite{PanchadcharamStreet07}). 
\end{Rem}

More precisely, we need the following two lemmas:

\begin{Lem} \label{Lem:Mackey-vs-PB-1}
Let $\cat E$ be an extensive 2-category (\Cref{Def:extensive}). Then squares of the form
\begin{equation} \label{eq:MackeyPB-kind1}
\vcenter{
\xymatrix@!C@C=0pt@R=14pt{
& A \ar[dl]_{u} \ar[dr]^-{i_A} & \\
X \ar[dr]_{i_X} \ar@{}[rr]|{\simeq} && A\sqcup B \ar[dl]^{u \sqcup v} \\
&X\sqcup Y &
}}
\quad\quad \textrm{ and } \quad\quad
\vcenter{
\xymatrix@!C@C=0pt@R=14pt{
& B \ar[dl]_{i_B} \ar[dr]^-{v} & \\
A\sqcup B \ar[dr]_{u\sqcup v} \ar@{}[rr]|{\simeq} && Y \ar[dl]^{i_Y} \\
&X\sqcup Y &
}}
\end{equation}
are Mackey squares for all 1-morphisms $u,v$.
\end{Lem}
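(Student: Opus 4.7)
The plan is to reduce this lemma to a direct invocation of the characterization of extensivity recalled in~\Cref{Def:extensive}. Given any $u\colon A \to X$ and $v\colon B \to Y$, I would assemble the 3-column diagram
\[
\xymatrix{
A \ar[d]_{u} \ar[r]^{i_A} & A \sqcup B \ar[d]^{u \sqcup v} & B \ar[l]_{i_B} \ar[d]^{v} \\
X \ar[r]^-{i_X} & X\sqcup Y & Y \ar[l]_-{i_Y}
}
\]
of the shape displayed in~\eqref{eq:PB-vs-coprod}, where each of the two squares commutes up to the canonical invertible 2-cell that is part of the data of $u \sqcup v$ (\ie the coherence iso $(u \sqcup v) \circ i_A \cong i_X \circ u$, resp.\ $(u \sqcup v) \circ i_B \cong i_Y \circ v$, provided by the universal property of $A \sqcup B$).

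The top row $A\xrightarrow{i_A} A\sqcup B \xleftarrow{i_B} B$ is by construction a coproduct diagram in $\cat E$. The iff criterion of~\Cref{Def:extensive}, condition~(2), applied to this diagram then says that the top row is a coproduct if and only if both squares in the diagram are Mackey squares. The forward direction of this equivalence immediately yields that both squares are Mackey squares, and these are precisely (by removing the opposite column, which is irrelevant) the two triangular diagrams displayed in~\eqref{eq:MackeyPB-kind1}.

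The main obstacle, such as it is, lies not in the lemma itself but in ensuring that the 2-cells decorating the squares really are the canonical coherence isomorphisms built into the construction of $u \sqcup v$, as opposed to some other iso-2-cells that happen to fill the same underlying 1-cell diagram. Since any two choices of such a $u \sqcup v$ differ by a unique invertible 2-cell, and the Mackey-square property is invariant under post-composition with such an isomorphism (via \Cref{Rem:iso-comma-Mack-characterization}), this rigidity is harmless. The genuine technical content is entirely packaged into the cited \cite[Thm.\,2.3]{BungeLack03}, which we are treating as a black box.
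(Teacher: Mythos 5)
Your proposal is correct and is exactly the paper's argument: the paper's proof simply declares the lemma immediate from the characterization of extensivity in Definition~\ref{Def:extensive}, and your diagram (with top row $A\xrightarrow{i_A}A\sqcup B\xleftarrow{i_B}B$ a coproduct, bottom row the coproduct inclusions of $X\sqcup Y$, and vertical 1-cells $u$, $u\sqcup v$, $v$) is precisely the instance of \eqref{eq:PB-vs-coprod} whose ``top row is a coproduct $\Rightarrow$ both squares are Mackey'' direction gives the claim. Your extra remark about the 2-cells being the canonical coherence isomorphisms, and invariance of the Mackey-square property under invertible 2-cells, is a harmless elaboration the paper leaves implicit.
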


\begin{proof}
This is immediate from the characterization of extensive 2-categories recalled in \Cref{Def:extensive}.
\end{proof}

\begin{Lem} \label{Lem:Mackey-vs-PB-2}
In an extensive 2-category~$\cat E$, if the two squares on the left are Mackey squares
\begin{equation} \label{eq:MackeyPB-kind2}
\vcenter{
\xymatrix@C=14pt@R=14pt{
& P_\ell \ar[dl]_{p_\ell} \ar[dr]^-{q_\ell} & \\
X_\ell \ar[dr]_{u_\ell} \ar@{}[rr]|{\underset{\sim}{\oEcell{\gamma_\ell}}} && Y \ar[dl]^{w} \\
&Z &
}}
\;\;\;\; (\ell\in \{1,2\})
\quad\quad \rightsquigarrow \quad\quad
\vcenter{
\xymatrix@!C@C=-3pt@R=14pt{
& P_1\sqcup P_2 \ar[dl]_{p_1 \sqcup p_2} \ar[dr]^-{(q_1,q_2)} & \\
X_1\sqcup X_2 \ar[dr]_{(u_1,u_2)} \ar@{}[rr]|{\underset{\sim}{\oEcell{(\gamma_1,\gamma_2)}}} && Y \ar[dl]^{w} \\
&Z &
}}
\end{equation}
then so is the induced square on the right.
\end{Lem}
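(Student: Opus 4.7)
The strategy is to verify the 2-universal property of a Mackey square directly for the candidate apex $P_1\sqcup P_2$, using the extensivity of $\cat E$ to decompose any test datum into two componentwise test data for the two given Mackey squares and then reassemble the resulting factorizations. Thus the only real content is extensivity; the rest is a careful exercise in using universal properties.

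To carry this out, suppose we are given test data consisting of an object $T\in\cat E$, 1-morphisms $r\colon T\to X_1\sqcup X_2$ and $s\colon T\to Y$, and an invertible 2-cell $\sigma\colon (u_1,u_2)\,r\isoEcell w\,s$. First, form Mackey squares of $r$ against the two coproduct inclusions $i_{X_\ell}\colon X_\ell\to X_1\sqcup X_2$; these exist by clause~(1) of the criterion in~\Cref{Def:extensive} and yield 1-morphisms $r_\ell\colon T_\ell\to X_\ell$ together with comparison maps $T_\ell\to T$ whose coproduct $T_1\sqcup T_2\isotoo T$ is an equivalence by clause~(2). Replacing $T$ by $T_1\sqcup T_2$ along this equivalence, we may assume $r=r_1\sqcup r_2$. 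Then the universal property of the coproduct~\eqref{eq:coprod-equiv} canonically decomposes the remaining data: $s$ corresponds to the pair $(s_1,s_2)$ with $s_\ell:=s\circ i_{T_\ell}\colon T_\ell\to Y$, and $\sigma$ corresponds to a pair of invertible 2-cells $\sigma_\ell\colon u_\ell r_\ell\isoEcell w s_\ell$ obtained by horizontal whiskering with $i_{T_\ell}$.

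Feeding each triple $(T_\ell,r_\ell,s_\ell,\sigma_\ell)$ into the Mackey property of the given square with apex $P_\ell$ produces a factorization $t_\ell\colon T_\ell\to P_\ell$ together with invertible 2-cells $r_\ell\isoEcell p_\ell t_\ell$ and $q_\ell t_\ell\isoEcell s_\ell$ coherent with $\gamma_\ell$. Assembling via the coproduct yields the desired lift $t:=t_1\sqcup t_2\colon T\simeq T_1\sqcup T_2\to P_1\sqcup P_2$, whose coherence 2-cells recombine through~\eqref{eq:coprod-equiv} to match $\sigma$ and $(\gamma_1,\gamma_2)$. The second clause of the 2-universal property (that 2-morphisms $t\Rightarrow t'$ into $P_1\sqcup P_2$ correspond bijectively to pairs of 2-morphisms between their projections to $X_1\sqcup X_2$ and $Y$) is established by the same decomposition principle applied now to 2-cells, again using~\eqref{eq:coprod-equiv}.

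The main obstacle is coherence bookkeeping: one must verify that the two componentwise 2-cells $\sigma_\ell$, produced via~\eqref{eq:coprod-equiv} and then re-assembled, really recover the original $\sigma$ modulo the canonical structure 2-cells identifying $T$ with $T_1\sqcup T_2$, and likewise that the assembled coherence data for $t$ satisfies the required pasting equation $\sigma=(u_1,u_2)\varphi\cdot(\gamma_1,\gamma_2)t\cdot w\psi$. This is formal, following directly from the extensivity axiom and the definition of coproducts in a 2-category, but it is notationally heavy and is the only place where care is required.
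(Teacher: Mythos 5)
Your proposal is correct and follows essentially the same route as the paper: both use extensivity to decompose the test object $T$ along the splitting of $X_1\sqcup X_2$ (yielding $T\simeq T_1\sqcup T_2$ with the data restricting componentwise), feed each component into the corresponding given Mackey square, reassemble the factorizations through the coproduct, and dispatch the 2-cell clause by the same decomposition argument. The only cosmetic difference is that the paper phrases the verification via the criterion of \Cref{Rem:iso-comma-Mack-characterization} (showing the comparison functor $\cat E(T,P_1\sqcup P_2)\to\big(\cat E(T,(u_1,u_2))/\cat E(T,w)\big)$ is essentially surjective and fully faithful) rather than checking the 2-universal property directly, which amounts to the same thing.
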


\begin{proof}
Consider two Mackey squares as in \eqref{eq:MackeyPB-kind2}. By \Cref{Rem:iso-comma-Mack-characterization}, we must show that the functor into the iso-comma category
\begin{align} \label{eq:comparison-E2}
\cat E(T, P_1\sqcup P_2) &\longrightarrow \big(\cat E(T,(u_1,u_2)) / \cat E(T,w)\big) \\ 
 t &\longmapsto (\,(p_1\sqcup p_2)t, (q_1,q_2)t, (\gamma_1,\gamma_2)t \, ) \nonumber
\end{align}
is an equivalence for every~$T$. 
Let $(x,y,\varphi)$ be any object in the target, that is $x\colon T\to X_1\sqcup X_2$ and $y\colon T\to Y$ are 1-cells and $\varphi\colon (u_1,u_2)x\Rightarrow wy$ is a 2-cell in~$\cat E$. By the extensivity of~$\cat E$, there exists a decomposition $T_1\sqcup T_2\overset{\sim}{\to} T$  which identifies $x$ with the coproduct of some $x_1\colon T_1\to X_1$ and $x_2\colon T_2\to X_2$. By precomposing with the canonical inclusions $T_\ell\to T$ ($\ell=1,2$), we can also write $y$ and $\varphi$ in their two components $y_\ell\colon T_\ell\to Y$ and
\[ 
\vcenter{
\xymatrix@C=14pt@R=14pt{
& T_\ell \ar[dl]_{x_\ell} \ar[dr]^-{y_\ell} & \\
X_\ell \ar[dr]_{u_\ell} \ar@{}[rr]|{\underset{\sim}{\oEcell{\varphi_\ell}}} && Y \ar[dl]^w \\
&Z &
}}
\]
respectively. Using the two given Mackey squares on $\overset{u_\ell\;\,}{\to} \overset{w}{\gets}$, the above 2-cells $\varphi_\ell$ can be written as pastings of the form
\[
\vcenter{
\xymatrix@C=14pt@R=14pt{
& T_\ell \ar[d]^{t_\ell} \ar@/_3ex/[ddl]_{x_\ell} \ar@/^3ex/[ddr]^{y_\ell} \ar@{}[ddl]|(.43){\simeq} \ar@{}[ddr]|(.43){\simeq} & \\
& P_\ell \ar[dl]_{p_\ell} \ar[dr]^-{q_\ell} & \\
X_\ell \ar[dr]_{u_\ell} \ar@{}[rr]|{\underset{\sim}{\oEcell{\gamma_\ell}}} && Y \ar[dl]^{w} \\
&Z &
}}
\]
for some $t_\ell\colon T_\ell\to P_\ell$ (for $\ell= 1,2$). Let $t:= (T \simeq T_1\sqcup T_2 \xrightarrow{t_1\sqcup t_2} P_1\sqcup P_2)$. By construction, the image of $t$ under the functor \eqref{eq:comparison-E2} is isomorphic to the given triple $(x,y,\varphi)$. 
This shows that \eqref{eq:comparison-E2} is essentially surjective.

A similar extensivity argument for 2-cells $\alpha\colon t\Rightarrow t'$ shows fully faithfulness, whence the desired equivalence. 
The remaining details are straightforward and are left to the reader.
\end{proof}

\begin{Def} [{Spannable pair}] 
\label{Def:spannable}
We call  \emph{spannable pair} a pair $(\GG;\JJ)$ where:
\begin{enumerate}[(1)]
\item  $\GG$ is a $(2,1)$-category (\Cref{Def:21cat}) which we assume essentially small, \ie the equivalence classes of objects form a set and every Hom category is small; 
\item $\GG$ is also an extensive 2-category (\Cref{Def:extensive}); and
\item $\JJ$ is a distinguished class of 1-cells of~$\GG$;
\end{enumerate}
and the pair $(\GG;\JJ)$ satisfies the following three axioms:
\begin{enumerate}[\rm(a)] 
\item \label{it:J-closures}
The class $\JJ$ contains the equivalences of $\GG$ (see \S\,\ref{parag:pseudofun}) and is closed under horizontal composition and under taking isomorphic 1-cells. 
In particular, we may identify $\JJ$ with the corresponding \emph{2-full} sub-2-category of~$\GG$ (\ie if $\alpha\colon u\Rightarrow v$ is a 2-cell of $\GG$ with $u,v$ in $\JJ$ then $\alpha$ is also in the sub-2-category~$\JJ$).
\item \label{it:Mackey-sq}
For any 1-cells $X \xrightarrow{i}  Z \xleftarrow{u} Y$ with $i\in \JJ$, there exists in $\GG$ a Mackey square
\[ 
\vcenter{
\xymatrix@C=14pt@R=14pt{
& P \ar[dl]_{p} \ar[dr]^-{q} & \\
X \ar[dr]_i \ar@{}[rr]|{\underset{\sim}{\oEcell{\gamma}}} && Y \ar[dl]^u \\
&Z &
}}
\]
(see \S\,\ref{parag:isocomma}), and moreover $q \in \JJ$.
\item \label{it:coprods}
For any finite set  $\{u_\ell\colon X_\ell\to Y\}_\ell$ of 1-cells of~$\GG$ with common target, the 1-cell $(u_\ell)_\ell\colon \coprod_\ell X_\ell \to Y$ is in~$\JJ$ iff each $u_\ell$ is.
\end{enumerate}
 In the special case where $\GG=\JJ$ we write $\GG:= (\GG;\JJ)$ for short and call $\GG$ a \emph{spannable (2,1)-category}. 
\end{Def}

\begin{Rem} \label{Rem:special-JJ}
By \eqref{it:J-closures} and~\eqref{it:coprods}, the canonical 1-cells $X \xrightarrow{i_X} X\sqcup Y  \xleftarrow{i_Y} Y$ of a coproduct belong to~$\JJ$, because the identity $\Id_{X\sqcup Y}= (i_X,i_Y)$ does.
Similarly, for every $X$ the unique 1-cell $\varnothing\to X$ from the initial object ($=$~empty coproduct) is in~$\JJ$, because it factors as  the composite $\varnothing \smash{\xrightarrow{i_\varnothing}} \varnothing \sqcup X \simeq X$ of two 1-cells in~$\JJ$.
\end{Rem}

\begin{Rem} \label{Rem:theG=Jcase}
Notice that a spannable 2-category $\GG$ (\ie the case of a spannable pair with $\JJ=\GG$, which will cover most of our explicit examples) is precisely the same thing as an essentially small extensive (2,1)-category admitting arbitrary Mackey squares. Indeed, the closure properties \eqref{it:J-closures}--\eqref{it:coprods} are then automatically satisfied.
\end{Rem}

\begin{Cons}[{The category of spans $\spancat(\GG;\JJ)$}] 
\label{Cons:1-spans}
Let $(\GG;\JJ)$ be a spannable pair as in \Cref{Def:spannable}. We construct a category
\[\spancat (\GG;\JJ) \]
whose objects are the same as those of $\GG$, and where a morphism $X\to Y$ is the equivalence class of a span of 1-morphisms of~$\GG$
\[
\xymatrix@R=.5em{
&S \ar[dl]_u \ar[dr]^{i \,\in\, \JJ} & \\
X \ar@{..>}[rr] & & Y
}
\]
where the `forward' one belongs to~$\JJ$. Two spans $X \xleftarrow{u} S \xrightarrow{i} Y$ and $X \xleftarrow{u'} S' \xrightarrow{i'} Y$ are \emph{equivalent} iff there exist in $\GG$ a diagram
\[
\vcenter{
\xymatrix@R=.5em@C=4em{ 
& S \ar[dl]_u \ar[dr]^-i \ar[dd]^s_{\simeq} & \\
X \ar@{}[r]|{\;\;\;\;\alpha\SEcell}  && Y \ar@{}[l]|{\SWcell\beta\;\;\;\;} \\
& S' \ar[ul]^{u'} \ar[ur]_{i'} &
}}
\]
where $s\colon S\overset{\sim}{\to} S'$ is an equivalence and $\alpha\colon u\Rightarrow u's$ and $\beta\colon i \Rightarrow i's$ are 2-morphisms (which are invertible, this being a (2,1)-category). The composition of two (equivalence classes of) spans $[X \xleftarrow{u} S \xrightarrow{i} Y]$ and $[Y \xleftarrow{v} T \xrightarrow{j} Z]$ is the span $[ X \xleftarrow{up} P \xrightarrow{jq} Z]$ which is obtained by constructing a Mackey square in the middle:
\[ 
\vcenter{
\xymatrix@R=1em@C=3em{
&& P \ar[dl]_{p} \ar[dr]^-{q} 
   && \\
& S \ar[dl]_u \ar[dr]_i \ar@{}[rr]|{\underset{\sim}{\oEcell{\gamma}}} && T \ar[dl]^v \ar[dr]^j & \\
X \ar@{..>}[rr] &&Y \ar@{..>}[rr] && Z
}}
\]
(Note that such a Mackey square exists and $jq\in \JJ$ by the hypotheses \eqref{it:J-closures} and~\eqref{it:Mackey-sq} on a spannable pair.) The identity morphism of an object $X$ is given by the span $[X\xleftarrow{\Id} X \xrightarrow{\Id} X]$.
\end{Cons}

\begin{Prop} \label{Prop:1-spans}
\Cref{Cons:1-spans} yields a well-defined category $\spancat(\GG;\JJ)$, which moreover is semi-additive (see \S\,\ref{parag:k-lin}). Explicitly, the zero object is given by the initial object $\varnothing$ (empty coproduct) of~$\GG$ and the direct sum diagram for two objects $X,Y$ is given by the four spans (\ie two spans considered in both directions)
\[
\xymatrix@R=1em@C=3em{
& X \ar@{=}[dl] \ar[dr]^{i_X} && Y \ar[dl]_{i_Y} \ar@{=}[dr] & \\
X \ar@{..>}@<.4ex>[rr] \ar@{<..}@<-.4ex>[rr] &&  X\sqcup Y && Y \ar@{..>}@<-.4ex>[ll] \ar@{<..}@<.4ex>[ll] \\
&  X \ar@{=}[ul] \ar[ur]_{i_X} && Y \ar[ul]^{i_Y} \ar@{=}[ur] &
}
\]
where $X \xrightarrow{i_X} X\sqcup Y \xleftarrow{i_Y} Y $ is the coproduct of $X$ and $Y$ in~$\GG$. The spans 
\[
\vcenter{
\xymatrix@R=.5em{
& \varnothing \ar[dl] \ar[dr] & \\
X \ar@{..>}[rr] & & Y
}}
\quad\quad \textrm{ and } \quad\quad
\vcenter{
\xymatrix@R=.5em{
&S_1\sqcup S_2 \ar[dl]_{(u_1,u_2)\;\;} \ar[dr]^{\;\;(i_1 , i_2)} & \\
X \ar@{..>}[rr] & & Y
}}
\]
provide the zero map $X\to Y$ and the sum of two maps $[X\xleftarrow{u_\ell} S_\ell \xrightarrow{i_\ell} Y]$, $\ell\in \{1,2\}$.
\end{Prop}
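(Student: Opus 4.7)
The plan is to verify the assertions in three stages: (i) $\spancat(\GG;\JJ)$ is a well-defined category; (ii) the initial object $\varnothing$ of~$\GG$ becomes a zero object; and (iii) the coproduct $X \sqcup Y$ together with the four displayed spans forms a biproduct, from which the stated formula for the sum of parallel maps follows.

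For (i), the given relation on spans is reflexive, symmetric (by inverting the equivalence~$s$ and the invertible 2-cells~$\alpha,\beta$), and transitive (by vertically pasting two comparison diagrams), hence an equivalence relation. To check that the composition descends to equivalence classes, I would apply the 2-universal property \S\,\ref{parag:isocomma} of Mackey squares twice: first to show that any two Mackey squares built over the same cospan have canonically equivalent apexes, so the composite class does not depend on the choice of Mackey square; and second to transport an equivalence between input representatives to an equivalence of composite apexes. The right leg $jq$ of a composite is automatically in~$\JJ$ by axioms \eqref{it:J-closures} and~\eqref{it:Mackey-sq} of~\Cref{Def:spannable}. Associativity is the main technical step: both bracketings of a triple composite are computed as pastings of two Mackey squares sharing a middle edge, and I would verify directly from the 2-universal property that such horizontal pastings are again Mackey squares (the standard pasting lemma), so that the two bracketings yield equivalent composite spans. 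The identity laws are immediate, since the trivial square on an identity 1-cell is a Mackey square.

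For (ii), the initial object~$\varnothing$ of~$\GG$ satisfies $\GG(\varnothing,X)\simeq 1$ for every~$X$. Combined with the extensivity characterization~\eqref{eq:PB-vs-coprod} applied to the degenerate coproduct $\varnothing\simeq\varnothing\sqcup\varnothing$, this forces every 1-cell with target~$\varnothing$ to be an equivalence with~$\varnothing$. Hence any span in $\spancat(\GG;\JJ)$ admitting a 1-cell between its apex and~$\varnothing$ is equivalent to the canonical span with apex~$\varnothing$, whose membership in~$\JJ$ is guaranteed by \Cref{Rem:special-JJ}. This makes both $\spancat(X,\varnothing)$ and $\spancat(\varnothing,Y)$ singleton sets, so that $\varnothing$ is a zero object.

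For (iii), I would verify the biproduct identities directly. The composite of the projection $[X\sqcup Y\xleftarrow{i_X}X\xrightarrow{=}X]$ with the inclusion $[X\xleftarrow{=}X\xrightarrow{i_X}X\sqcup Y]$ is computed via a Mackey square over $X\xrightarrow{i_X}X\sqcup Y\xleftarrow{i_X}X$, which by \Cref{Lem:Mackey-vs-PB-1} has apex~$X$ with identity maps, yielding the identity span. The cross composite uses a Mackey square over $X\xrightarrow{i_X}X\sqcup Y\xleftarrow{i_Y}Y$; instantiating the extensivity characterization~\eqref{eq:PB-vs-coprod} with $T=Y$ and $c=i_Y$ (and using that the Mackey square with $c=i_Y$ along itself has trivial apex~$Y$) forces its apex to be~$\varnothing$, which by~(ii) represents the zero map. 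This establishes the biproduct matrix identities. The zero map formula is then automatic. Finally, the sum of two parallel spans is computed via the general recipe in \S\,\ref{parag:k-lin}, namely $f_1+f_2 = (1\;1)\circ(f_1\oplus f_2)\circ\binom{1}{1}$; using \Cref{Lem:Mackey-vs-PB-2} to handle the coproduct step and the triviality of Mackey squares along identity 1-cells to handle the diagonal and codiagonal steps, a direct calculation yields precisely the span $[X\xleftarrow{(u_1,u_2)}S_1\sqcup S_2\xrightarrow{(i_1,i_2)}Y]$. Throughout, the principal obstacle is the careful bookkeeping of 2-cells under iterated invocations of the 2-universal property, particularly in the associativity and well-definedness arguments of~(i).
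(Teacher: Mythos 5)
Your part (i) is fine and is exactly how the paper handles well-definedness, unitality and associativity (the paper simply declares these verifications routine), and your part (ii) is also essentially correct: extensivity does make the initial object strict, so $\spancat(\GG;\JJ)(X,\varnothing)$ and $\spancat(\GG;\JJ)(\varnothing,Y)$ are singletons and $\varnothing$ is a zero object. The genuine gap is in part (iii). Verifying the four matrix identities $p_Xi_X=\id$, $p_Yi_Y=\id$, $p_Xi_Y=0$, $p_Yi_X=0$ does \emph{not} establish that $X\sqcup Y$ is a direct sum in $\spancat(\GG;\JJ)$: in a bare category with a zero object such identities can hold for an object that is neither a product nor a coproduct (the wedge in pointed sets is the standard example), and the recipe $f_1+f_2=(1\;1)\circ(f_1\oplus f_2)\circ\smat{1\\1}$ of \S\,\ref{parag:k-lin} only makes sense, and only yields a well-behaved addition, once the category is already known to be semi-additive (or at least once the product/coproduct universal properties of $X\sqcup Y$ in the span category are known). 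As written, you derive the enrichment from the biproducts and the biproducts from identities that do not characterize them, so the argument for the sum formula is circular.

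What is missing is precisely the step the paper supplies with \Cref{Lem:Mackey-vs-PB-1} and \Cref{Lem:Mackey-vs-PB-2}. The paper's route is: first \emph{define} the addition on each Hom set by the displayed formulas (the $\varnothing$-span as zero, the $S_1\sqcup S_2$-span as sum), note that this is an associative, unital operation by the universal property of coproducts in $\GG$, and then prove that composition is bi-additive -- zero spans are absorbed because the relevant squares with apex $\varnothing$ are Mackey squares (\Cref{Lem:Mackey-vs-PB-1}), and sums are preserved because a coproduct of Mackey squares over a common cospan is again a Mackey square (\Cref{Lem:Mackey-vs-PB-2}). Only with this enrichment in hand do your four composites (together with $i_Xp_X+i_Yp_Y=\id_{X\sqcup Y}$, which is immediate from the displayed addition since $(i_X,i_Y)=\Id_{X\sqcup Y}$) certify that the displayed diagram is a biproduct, and then the sum formula is not a computation to be performed but part of the definition. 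Alternatively, you could repair your route by proving directly that $X\sqcup Y$ with the inclusion spans is a coproduct and with the projection spans a product in $\spancat(\GG;\JJ)$; but that requires decomposing an arbitrary span into or out of $X\sqcup Y$ using the extensivity of $\GG$ -- essentially the content of the same two lemmas -- and does not follow from the matrix identities alone.
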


\begin{proof}
These are rather straightforward verifications, as follows. 

To see that the composition of spans is well-defined, associative and unital, one must repeatedly employ, in a routine way, the 2-universal property of Mackey squares and the closure properties~\eqref{it:J-closures} and~\eqref{it:Mackey-sq} of the spannable pair~$(\GG;\JJ)$. 

Thanks to \Cref{Rem:special-JJ} and property~\eqref{it:coprods}, we may form the zero span and the sum of two spans as indicated. The universal property of coproducts ensures that this operation is associative and unital on each Hom set. To verify that the composition of $\spancat(\GG;\JJ)$ preserves zero spans, it suffices to notice that the squares of the form
\[ 
\vcenter{
\xymatrix@C=18pt@R=14pt{
& \varnothing \ar[dl] \ar[dr] & \\
\varnothing \ar[dr] \ar@{}[rr]|{\simeq} && B \ar[dl]^{v} \\
&Y &
}}
\quad\quad \textrm{ and }\quad \quad
\vcenter{
\xymatrix@C=18pt@R=14pt{
& \varnothing \ar[dl] \ar[dr] & \\
A \ar[dr]_{u} \ar@{}[rr]|{\simeq} && \varnothing \ar[dl] \\
&Y &
}}
\]
are Mackey squares by \Cref{Lem:Mackey-vs-PB-1} (set $A=X=\varnothing$ in the first one and $B=Y=\varnothing$ in the second one).
Similarly, composition preserves sums of spans because the sum of two Mackey squares, as in \eqref{eq:MackeyPB-kind2} (and also its left-right mirror version), is again a Mackey square by \Cref{Lem:Mackey-vs-PB-2}. Thus $\spancat(\GG;\JJ)$ is enriched in abelian monoids.

Finally, to verify the four equations for the claimed biproducts we can use that 
\[ 
\vcenter{
\xymatrix@C=14pt@R=14pt{
& X \ar[dl]_{\Id} \ar[dr]^-{\Id} & \\
X \ar[dr]_{i_X} \ar@{}[rr]|{=} && X \ar[dl]^{i_X} \\
&X\sqcup Y &
}}
\quad\quad \textrm{ and }\quad \quad
\vcenter{
\xymatrix@C=14pt@R=14pt{
& \varnothing \ar[dl] \ar[dr] & \\
X \ar[dr]_{i_X} \ar@{}[rr]|{\simeq} && Y \ar[dl]^{i_Y} \\
&X\sqcup Y &
}}
\]
are Mackey squares, which again follows from \Cref{Lem:Mackey-vs-PB-1}, this time by specializing the two squares in \eqref{eq:MackeyPB-kind1} to $u=\Id_X$ and $B=\varnothing$. Note that the four spans comprising a biproduct diagram are all permissible by \Cref{Rem:special-JJ}.
\end{proof}

\begin{Rem} \label{Rem:span-bicats}
The category $\spancat(\GG;\JJ)$ of \Cref{Cons:1-spans} is only the shadow of a richer structure. Indeed, it is precisely the 1-truncation (see \S\,\ref{parag:truncation})
\[
\spancat(\GG;\JJ) = \pih (\Span(\GG;\JJ))
\]
 of a \emph{bi}category of spans $\Span(\GG;\JJ)$. This is explained in \cite[Ch.\,4]{BalmerDellAmbrogio18pp} in all details, for the situation where the necessary Mackey squares of $\GG$ are actually iso-comma squares; the additive aspects are covered in \cite[Ch.\,7]{BalmerDellAmbrogio18pp} but only for the example  $\GG=\gpd$ of groupoids.
It is straightforward to generalize the construction of $\Span(\GG;\JJ)$ to any spannable pair $(\GG;\JJ)$ as in \Cref{Def:spannable}, but it involves some choices (namely, in order to obtain specified composition functors one must choose a pseudo-inverse for each of the equivalences~\eqref{eq:comparison-iso-commas}).
\end{Rem}

\begin{Rem} \label{Rem:alt_descr_Span}
There is an alternative way of describing the category $\spancat(\GG;\JJ)$, where the operations of forming spans and 1-truncating are permuted.
In this other picture, we begin by forming the truncated (ordinary) category $\pih \GG$, where a morphism is an isomorphism class $[u]$ of 1-morphisms in~$\GG$. Then we consider spans in $\pih \GG$, that is diagrams 
\[ 
X \overset{[u]}{\longleftarrow}S \overset{[i]}{\longrightarrow} Y
\]
with $i\in \JJ$, and we identify isomorphic spans, where two spans $([u],[i])$ and $([u'],[i'])$ are \emph{isomorphic} if there exists a commutative diagram
\begin{align*}
\xymatrix@C=24pt@R=10pt{
& S \ar[ld]_-{[u]} \ar[dr]^-{[i]} \ar[dd]^-{[f]}_\simeq & \\
X && Y \\
& S' \ar[ul]^-{[u']} \ar[ru]_-{[i']} &
}
\end{align*}
in $\pih{\GG}$ with $[f]$ invertible. Note however that, in general, one cannot define the composition of $\spancat(\GG;\JJ)$ purely in terms of the ordinary category~$\pih \GG$, because one would still need to remember the (images in $\pih \GG$ of the) Mackey squares of~$\GG$. (Sometimes said images admit an intrinsic characterization  in~$\pih \GG$, but not always.)
\end{Rem}

\begin{Def}[{The $\kk$-linear category of spans, $\spancat_\kk(\GG;\JJ)$}]
\label{Cons:1-spansk}

We define the \emph{$\kk$-linear category of spans in $\GG$ with respect to~$\JJ$} to be the $\kk$-linearization (\Cref{Cons:kk-linearization}) of the semi-additive category $\spancat(\GG;\JJ)$ of \Cref{Prop:1-spans}:
\[ \spancat_\kk(\GG;\JJ) := \kk \big( \spancat (\GG;\JJ) \big) \,. \] 
Thus the objects remain the same and the morphisms of $\spancat_\kk(\GG;\JJ)$ are formal $\kk$-linear combinations of morphisms of $\spancat(\GG;\JJ)$, in a way that preserves sums. 
\end{Def}

The span construction is functorial in the following evident sense:

\begin{Prop} \label{Prop:functoriality-of-spancat}
Let $\cat F\colon (\GG;\JJ)\to (\GG';\JJ')$ be a \emph{morphism of spannable pairs}, by which we mean a pseudo-functor (\eg a 2-functor) $\cat F\colon \GG\to \GG'$ preserving (up to equivalence) finite coproducts and Mackey squares and such that $\cat F(\JJ)\subseteq \JJ'$. Then $\cat F$ induces a $\kk$-linear functor
on the span categories of \Cref{Cons:1-spansk}
\[
\spancat_\kk(\cat F) \colon \spancat_\kk(\GG;\JJ) \longrightarrow \spancat_\kk(\GG';\JJ')
\]
sending $[X \xleftarrow{u} S \xrightarrow{i} Y]$ to $[\cat F X \xleftarrow{\cat Fu} \cat FS \xrightarrow{\cat Fi} \cat FY]$.
Moreover 
\[ \spancat_\kk(\cat F_2 \circ \cat F_1)=\spancat_\kk(\cat F_2) \circ \spancat_\kk( \cat F_1)
\quad \textrm{ and } \quad
\spancat_\kk(\Id_{(\GG;\JJ)})=\Id_{\,\spancat_\kk(\GG;\JJ)}
\]
and if $\cat F$ is a biequivalence then $\spancat_\kk(\cat F)$ is an equivalence of $\kk$-linear categories.
\end{Prop}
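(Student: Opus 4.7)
The plan is to first construct a well-defined additive functor on the underlying semi-additive span categories of Proposition \ref{Prop:1-spans}, and then invoke the universal property of the $\kk$-linearization (Construction \ref{Cons:kk-linearization}) to extend it to the $\kk$-linear setting.

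On objects, simply send $X$ to $\cat FX$. A span $X\xleftarrow{u}S\xrightarrow{i}Y$ with $i\in\JJ$ is sent to $\cat FX\xleftarrow{\cat Fu}\cat FS\xrightarrow{\cat Fi}\cat FY$, whose right leg belongs to $\JJ'$ since $\cat F(\JJ)\subseteq\JJ'$. Well-definedness on equivalence classes of spans is immediate because pseudo-functors preserve equivalences and invertible $2$-cells (\S\ref{parag:pseudofun}): an equivalence datum $(s,\alpha,\beta)$ between two spans in $\GG$ is carried to $(\cat Fs,\cat F\alpha,\cat F\beta)$ in $\GG'$, which again defines an equivalence of spans.

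For functoriality of the assignment $[X\leftarrow S\to Y]\mapsto[\cat FX\leftarrow\cat FS\to\cat FY]$, identities are preserved up to the unit isomorphism $\Id_{\cat FX}\cong\cat F\Id_X$ of the pseudo-functor, which presents an equivalence of identity spans. For composition, pick representatives $[X\xleftarrow uS\xrightarrow iY]$ and $[Y\xleftarrow vT\xrightarrow jZ]$ together with a Mackey square in $\GG$ with apex $P$, legs $p,q$ and filling $2$-cell $ip\cong vq$. Applying $\cat F$ and pasting with its compositors yields an invertible $2$-cell $\cat Fi\cdot\cat Fp\cong\cat Fv\cdot\cat Fq$ over $\cat FS\to\cat FY\leftarrow\cat FT$ with apex $\cat FP$; by the hypothesis on $\cat F$ this is again a Mackey square in $\GG'$. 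Since the composition of spans is independent (up to equivalence) of the chosen Mackey square, we may use this particular one to compute the composite in $\spancat(\GG';\JJ')$, which then visibly agrees with the image of the composite from $\spancat(\GG;\JJ)$.

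For additivity, the zero object and binary direct sums in $\spancat(\GG;\JJ)$ are inherited from the initial object and binary coproducts of $\GG$ (Proposition \ref{Prop:1-spans}), and these are preserved up to equivalence by~$\cat F$ by hypothesis; hence the induced functor sends zero spans to zero spans and sums of spans to sums of spans, so it is additive between semi-additive categories. The universal property of $\kk$-linearization then yields a unique $\kk$-linear extension $\spancat_\kk(\cat F)$. The equalities $\spancat_\kk(\cat F_2\circ\cat F_1)=\spancat_\kk(\cat F_2)\circ\spancat_\kk(\cat F_1)$ and $\spancat_\kk(\Id_{(\GG;\JJ)})=\Id_{\spancat_\kk(\GG;\JJ)}$ hold on the nose by the explicit description of the action on objects and representative spans. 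Finally, if $\cat F$ is a biequivalence with quasi-inverse pseudo-functor $\cat G$, the invertible modifications $\cat G\circ\cat F\cong\Id$ and $\cat F\circ\cat G\cong\Id$ are carried to natural isomorphisms between $\spancat_\kk(\cat G)\circ\spancat_\kk(\cat F)$ and the identity, and dually; hence $\spancat_\kk(\cat F)$ is a $\kk$-linear equivalence, with $\spancat_\kk(\cat G)$ as a quasi-inverse. The only real subtlety throughout is the bookkeeping of the compositors and unitors of~$\cat F$, which must be absorbed into the equivalence relation defining morphisms of $\spancat$ at each step; this is precisely what allows a pseudo-functor between $2$-categories to descend to a strict functor between the $1$-truncated span categories.
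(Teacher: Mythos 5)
Your proposal is correct and is essentially the argument the paper intends: the paper's own proof consists of the single line ``All claims are immediate from the constructions,'' and your write-up simply supplies the routine details (well-definedness on equivalence classes via preservation of invertible 2-cells, compatibility with composition via preserved Mackey squares, additivity via preserved coproducts, then the universal property of $\kk$-linearization and the quasi-inverse argument for biequivalences). The only caveat, inherited from the statement itself rather than introduced by you, is that for the final claim the quasi-inverse $\cat G$ must also respect the chosen classes ($\cat G(\JJ')\subseteq\JJ$ up to isomorphism) for $\spancat_\kk(\cat G)$ to be defined; this holds in all the paper's applications, where $\JJ=\GG$ and $\JJ'=\GG'$.
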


\begin{proof}
All claims are immediate from the constructions.
\end{proof}

Under a Krull-Schmidt-type finiteness hypothesis on~$\GG$, easily checked in all our examples, the Hom modules of $\spancat_\kk(\GG;\JJ)$ become particularly nice:
 
\begin{Prop} \label{Prop:ftfree-kHoms}
Let $(\GG;\JJ)$ be a spannable pair and consider $\spancat_\kk(\GG;\JJ)$, the associated $\kk$-linear category of spans. Then:
\begin{enumerate}[\rm(1)]
\item \label{it:Hom-freeness}
The Hom $\kk$-modules of $\spancat_\kk(\GG;\JJ)$ are all free, provided this holds: For every object $X$ of $\GG$ there exists an equivalence $X\simeq X_1 \sqcup \ldots \sqcup X_n$ to a coproduct of finitely many objects $X_i$ which are indecomposable with respect to coproducts; moreover, if $X\simeq X_1' \sqcup \ldots \sqcup X_m'$ is another such decomposition, then $n=m$ and there exist a permutation $\sigma\in \Sigma_n$ and equivalences $X'_i\simeq X_{\sigma(i)}$.
\item \label{it:Hom-ft}
 Assuming the hypothesis of \eqref{it:Hom-freeness} holds, the Hom $\kk$-modules  of $\spancat_\kk(\GG;\JJ)$ are all finitely generated (and free), provided that for every object $X$ the comma category $(\pih \GG)_{/X}= \pih( \GG_{/X})$ only has finitely many  $\sqcup$-indecomposable objects.
\end{enumerate}
\end{Prop}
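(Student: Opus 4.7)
The key observation is that a $\kk$-module obtained by first group-completing a commutative monoid $M$ and then tensoring with $\kk$ is free (resp.\ finitely generated free) as soon as $M$ itself is a free commutative monoid (resp.\ of finite rank). So the whole proposition reduces to analyzing the abelian-monoid Hom sets of $\spancat(\GG;\JJ)$ before $\kk$-linearization, and showing they are free commutative monoids whose canonical generating set is finite under the hypothesis of~\eqref{it:Hom-ft}.

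The natural candidates for generators are the classes of \emph{indecomposable spans}, meaning spans $X\xleftarrow{u} S\xrightarrow{i} Y$ (with $i\in\JJ$) whose middle object $S$ is $\sqcup$-indecomposable in~$\GG$. For existence of decompositions, given any span use the Krull--Schmidt hypothesis of~\eqref{it:Hom-freeness} to write $S\simeq S_1\sqcup\cdots\sqcup S_n$. The universal property of coproducts identifies $u$ and $i$ with tuples $(u_j)_j$ and $(i_j)_j$; axiom~\eqref{it:coprods} of a spannable pair guarantees that each $i_j$ remains in~$\JJ$, so we obtain indecomposable spans $[X\xleftarrow{u_j} S_j\xrightarrow{i_j} Y]$. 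By the explicit sum formula recorded in \Cref{Prop:1-spans}, the original span equals their sum. For uniqueness, suppose two sums of indecomposable spans are equivalent in $\spancat(\GG;\JJ)$. Then there is an equivalence between their middles $\bigsqcup_j S_j \simeq \bigsqcup_k S'_k$ intertwining both pairs of legs; the uniqueness clause of the Krull--Schmidt hypothesis forces a bijective matching $S'_k\simeq S_{\sigma(k)}$, and this matching automatically promotes to an equivalence of the individual indecomposable spans. This establishes that the Hom monoid is freely generated by (iso classes of) indecomposable spans, which proves~\eqref{it:Hom-freeness}.

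For~\eqref{it:Hom-ft}, the plan is to repackage indecomposable spans as objects of a comma category. An isomorphism class of spans $X\xleftarrow{u} S \xrightarrow{i} Y$ is, via the universal property of the coproduct $X\sqcup Y$, the same data as an isomorphism class of objects $(S,\,\langle u,i\rangle\colon S\to X\sqcup Y)$ in the truncated comma category $\pih(\GG_{/X\sqcup Y})$. Under this bijection, indecomposability of the span (i.e., of $S$) corresponds, thanks to the extensivity of~$\GG$ as reflected in \Cref{Rem:coprods-comma2cats}, exactly to indecomposability of the comma object. Hence the generating set produced in~\eqref{it:Hom-freeness} injects into the set of $\sqcup$-indecomposable objects of $\pih(\GG_{/X\sqcup Y})$, which is finite by hypothesis applied to $X\sqcup Y$. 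Therefore the Hom $\kk$-module is finitely generated and free.

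The main subtlety, and the only place where the hypotheses on $\JJ$ really matter, is ensuring that the decomposition and re-assembly of spans stay inside the class of admissible spans. This is precisely what axiom~\eqref{it:coprods} provides: it makes membership in~$\JJ$ compatible with splitting and joining along coproducts, so that the Krull--Schmidt decomposition of the middle object yields a genuine sum decomposition in $\spancat(\GG;\JJ)$ and conversely. Once this is in place, everything else is formal bookkeeping with the universal properties of coproducts and of the $\kk$-linearization of \Cref{Cons:kk-linearization}.
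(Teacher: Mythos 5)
Your overall strategy coincides with the paper's: show that each Hom abelian monoid of $\spancat(\GG;\JJ)$ is free on the classes of spans with $\sqcup$-indecomposable middle, then pass to the $\kk$-linearization (your shortcut that a free commutative monoid group-completes to a free abelian group is fine and replaces the paper's explicit cancellativity step), and the existence half of the decomposition, including the use of axiom (c) of \Cref{Def:spannable} to keep the components of the right leg in $\JJ$, is correct. The first genuine gap is in the uniqueness step of part (1): the sentence asserting that the Krull--Schmidt clause ``forces a bijective matching $S'_k\simeq S_{\sigma(k)}$, and this matching automatically promotes to an equivalence of the individual indecomposable spans'' is precisely what has to be proved. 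The Krull--Schmidt hypothesis only yields abstract equivalences between middle objects, and these have no reason to be compatible with the legs, so nothing promotes ``automatically''. What is needed is that the given equivalence $f$ between the coproducts of middles --- the one intertwining the legs up to the 2-cells $\alpha,\beta$ --- itself decomposes, up to isomorphism, into equivalences between matched indecomposable summands; this uses extensivity of $\GG$ (a 1-cell from an indecomposable object into a coproduct factors through a single summand), and only then do the restrictions of $\alpha,\beta$ give equivalences of the individual spans. One must also address the fact that two inequivalent indecomposable spans can have equivalent middle objects: if $f$ matched two such summands, leg-compatibility would force the corresponding span classes to coincide. This is exactly how the paper argues, and your proposal skips it.

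The second gap is in part (2), where your repackaging rests on a false universal property: a pair of 1-cells $S\to X$ and $S\to Y$ is \emph{not} the same thing as a 1-cell $S\to X\sqcup Y$ --- that would be the universal property of a \emph{product}, which a spannable pair is not assumed to have; the coproduct's universal property governs maps \emph{out of} $X\sqcup Y$. Worse, by extensivity (the biequivalence $\GG_{/X}\times\GG_{/Y}\simeq\GG_{/X\sqcup Y}$ of \Cref{Def:extensive}), a $\sqcup$-indecomposable object of $\GG_{/X\sqcup Y}$ is simply an indecomposable object of $\GG_{/X}$ or of $\GG_{/Y}$; it encodes no span, so the injection you claim of the generating set into the indecomposables of $\pih(\GG_{/X\sqcup Y})$ does not exist. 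The finiteness in part (2) should instead be extracted from the hypothesis applied to $X$ and to $Y$ separately, using the two legs of an indecomposable span (each leg exhibits the middle as one of the finitely many indecomposable objects of the corresponding comma category), which is the reading the paper intends; your argument for (2) as written does not establish the statement.
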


\begin{proof}
Notice that it suffices to show the claims hold in the special case $\kk=\mathbb Z$, as the general case will immediately follow by extending scalars.

Assume the hypothesis of~\eqref{it:Hom-freeness}, and fix two objects $X,Y$. Choose a full set of representatives 
\[
s_\ell := \left[ X \overset{u_\ell}{\longleftarrow} S_\ell \overset{i_\ell}{\longrightarrow} Y  \right]  \,,\quad \ell\in \Lambda
\]
for those spans in $\spancat (\GG;\JJ)(X,Y)$ whose middle object $S_\ell$ is indecomposable (the latter property is invariant under equivalence of spans). We claim that $\{s_\ell\}_{\ell\in \Lambda}$ is a basis of $\spancat_\mathbb Z (\GG;\JJ)(X,Y)$.

By hypothesis~\eqref{it:Hom-freeness}, we may decompose the middle object of every span between $X$ and $Y$ into a coproduct of indecomposables, which induces a sum-decomposition of the span. Hence the~$s_\ell$ generate the abelian monoid $\spancat(\GG;\JJ)(X,Y)$.

To show they are linearly independent, we first prove that the abelian monoid $\spancat(\GG;\JJ)(X,Y)$ is cancellative, that is: if $s,s',t$ are elements such that $s+t=s'+t$ then $s=s'$.
Thus consider three parallel spans
\[ 
s=\left[ X \overset{u}{\longleftarrow}S  \overset{i}{\longrightarrow} Y \right] \,,\;\;
s'=\left[ X \overset{u'}{\longleftarrow}S'  \overset{i'}{\longrightarrow} Y \right] \textrm{ and } \;\;
t=\left[ X \overset{v}{\longleftarrow}T \overset{j}{\longrightarrow} Y \right] 
\]
and suppose we have an equivalence 
\begin{equation} \label{eq:cancellative-equiv}
\vcenter{
\xymatrix@R=.5em@C=4em{ 
& S \sqcup T \ar[dl]_{(u,v)} \ar[dr]^-{(i,j)} \ar[dd]^f_{\simeq} & \\
X \ar@{}[r]|{\;\;\;\;\alpha\SEcell}  && Y \ar@{}[l]|{\SWcell\beta\;\;\;\;} \\
& S' \sqcup T \ar[ul]^{(u',v)} \ar[ur]_{(i',j)} &
}} \,.
\end{equation}
Now let us decompose the spans $s+t$ and $s'+t$ as sums of indecomposable spans, using the representatives chosen above. In particular, we get decompositions 
\[
S\simeq \coprod_{\ell} S_\ell^{\sqcup n_\ell} 
\,, \quad
S'\simeq \coprod_{\ell} S_\ell^{\sqcup n'_\ell} 
\textrm{ and } \quad 
T\simeq \coprod_{\ell} S_\ell^{\sqcup m_\ell} 
\]
where the $n_\ell,n'_\ell,m_\ell$ are non-negative integers, almost all of which are zero.
By the extensivity of~$\GG$, the equivalence $f$ must be a coproduct of 1-cells between these factors.  By the uniqueness part of hypothesis~\eqref{it:Hom-freeness}, and since we picked one $S_\ell$ per equivalence class, we see that $f$ must be \emph{diagonal} with respect to them.

Now, it is \emph{a priori} possible  that, for a fixed~$\ell$, the equivalence $f$ matches some of the identical factors $S_\ell$ within $S$ and~$T$ or $S'$ and~$T$, but after composing $f$ if necessary with a self-equivalence of the target which permutes said factors, we may assume that it has the form $f\simeq f_1\sqcup f_2$ for two equivalences $f_1\colon S\overset{\sim}{\to} S'$ and $f_2\colon T\overset{\sim}{\to} T$. 
(Note also that two different $s_\ell\neq s_k$ may have equivalent middle objects~$S_\ell\simeq S_k$, but $f$ cannot match two such middle objects because then, by~\eqref{eq:cancellative-equiv}, $f$ together with (suitable components of) $\alpha$ and~$\beta$ would yield an equivalence $s_\ell = s_k$; but this can only hold if $\ell=k$.)
Then $\alpha = (\alpha_1,\alpha_2)$ and $\beta=(\beta_1,\beta_2)$ must decompose accordingly. In particular, we obtain from~\eqref{eq:cancellative-equiv} an equivalence
\[
\vcenter{
\xymatrix@R=.5em@C=4em{ 
& S  \ar[dl]_{u} \ar[dr]^-{i} \ar[dd]^{f_1}_{\simeq} & \\
X \ar@{}[r]|{\;\;\;\;\alpha_1\SEcell}  && Y \ar@{}[l]|{\SWcell\beta_1\;\;\;\;} \\
& S' \ar[ul]^{u'} \ar[ur]_{i'} &
}}
\]
showing that $s=s'$ in $\spancat(\GG;\JJ)(X,Y)$, as claimed.

Next, we show that the set $\{s_\ell\}_\ell$ is a basis of the abelian monoid $\spancat (\GG;\JJ)(X,Y)$ (\ie every element can be written as a sum of generators in a unique way). It will then easily follow from cancellativity that it is also a basis for the $\mathbb Z$-module $\spancat (\GG;\JJ)(X,Y)$.
Consider two finite sums yielding the same element:
\[
\sum_{\ell} n_\ell s_\ell = \sum_\ell m_\ell s_\ell
\]
with $n_\ell, m_\ell\in \mathbb N$ (and almost all zero). This means that there is an equivalence 
\[
\vcenter{
\xymatrix@R=.5em@C=4em{ 
& {\coprod_\ell S_\ell^{\sqcup n_\ell}}  \ar[dl]_{(u_\ell)} \ar[dr]^-{(i_\ell)_\ell} \ar[dd]^{f}_{\simeq} & \\
X \ar@{}[r]|{\;\;\;\;\alpha\SEcell}  && Y \ar@{}[l]|{\SWcell\beta\;\;\;\;} \\
& {\coprod_\ell S_\ell^{\sqcup m_\ell}} \ar[ul]^{(u_\ell)} \ar[ur]_{(i_\ell)_\ell} &
}}
\]
of spans.
By extensivity and~\eqref{it:Hom-freeness} as before, the equivalence $f$ down the middle must decompose diagonally as a coproduct of equivalences $f_\ell\colon S_\ell^{\sqcup n_\ell}\overset{\sim}{\to} S_\ell^{\sqcup m_\ell}$ at each~$\ell$ such that $\alpha\colon u_\ell \cong u_\ell f_\ell$ and $\beta\colon i_\ell\cong i_\ell f_\ell$. Moreover, each of these must match the factors one-on-one, since the $S_\ell$ are indecomposable. Hence we must have $n_\ell = m_\ell$ for all~$\ell$, as claimed.

For part~\eqref{it:Hom-ft}, the finiteness of the basis $\{s_\ell\}_\ell$ in an easy consequence of the hypothesis that each comma category $(\pih \GG)_{/X}$ (or equivalently each comma 2-category $\GG_{/X}$) only has finitely many indecomposable objects. 
\end{proof}

\begin{Def}[{Mackey functor; \cf \cite[Def.\,2.5.4]{BalmerDellAmbrogio18pp}}]
\label{Def:Mackey-fun}
Let $\spancat_\kk(\GG;\JJ)$ be the category of spans of \Cref{Cons:1-spans} for some spannable pair $(\GG;\JJ)$.
A \emph{$\kk$-linear Mackey functor for $\GG$ with respect to $\JJ$} is defined to be a $\kk$-linear (hence also additive, \ie direct-sum preserving) functor 
\[
M\colon \spancat_\kk(\GG;\JJ)\longrightarrow \kk\MMod
\]
to the abelian category of $\kk$-modules, and a morphism of Mackey functors is simply a natural transformation. We denote by 
\[
\Mackey_\kk(\GG;\JJ) := \Fun_\kk(\spancat_\kk(\GG;\JJ), \kk\MMod)
\]
the resulting $\kk$-linear category. By the general properties of $\kk$-linearization, we have a canonical additive functor $\bisetcat(\gpd)\to \bisetcat_\kk(\gpd)$ which induces an isomorphism
\begin{equation} \label{eq:iso+kk}
\Mackey_\kk(\GG;\JJ)  \stackrel{\sim}{\too} \Fun_+(\spancat(\GG;\JJ), \kk\MMod ) 
\end{equation}
of $\kk$-linear categories, identifying additive functors on $\spancat (\GG;\JJ)$ and $\kk$-linear functors on $\spancat_\kk(\GG;\JJ)$.
\end{Def}

\begin{Rem}
\label{Rem:Mackey-functor-for-GI}%
Explicitly, a Mackey functor for $(\GG;\JJ)$ as in \Cref{Def:Mackey-fun} (and via \eqref{eq:iso+kk}) consists of a $\kk$-module $M(X)$ for every object $X$ of~$\GG$, a $\kk$-homomorphism $u^*\colon M(Y)\to M(X)$ for every 1-morphism $u\in \GG(X,Y)$ (corresponding to the image of the span $[Y \overset{u}{\leftarrow} X = X]$) and a $\kk$-homomorphism $u_*\colon M(X)\to M(Y)$ if furthermore $u\in\JJ$ (the image of the span $[X = X \overset{u\;}{\rightarrow} Y]$). This data is subject to the following rules\,:
\begin{enumerate}[(1)]
\item
\emph{Functoriality:} $(\Id_X)^*= (\Id_X)_*= \id_{M(X)}$ and $(uv)^*= v^*u^*$ for all $X\in \GG$ and all composable 1-cells $u,v$; and also $(uv)_*=u_*v_*$ when $u,v$ belong to~$\JJ$.
\item 
\emph{Isomorphism invariance:} If two 1-cells $u\simeq v$ are isomorphic in the category $\GG(X,Y)$ then $u^*=v^*$, and also $u_*=v_*$ whenever $u,v$ belong to~$\JJ$.
\item
\emph{Additivity:} The canonical morphism $M(X_1\sqcup X_2)\isoto M(X_1)\oplus M(X_2)$ is an isomorphism for all $X_1,X_2\in\GG$, as well as $M(\emptyset)\overset{\sim}{\to}0$.
\item 
\emph{Mackey formula:} For every Mackey square in~$\GG$ with $i$ (and thus also~$j$) in~$\JJ$
\[\vcenter{\xymatrix@C=14pt@R=14pt{
& P \ar[dl]_-{v} \ar[dr]^-{j}
 \ar@{}[dd]|(.5){\isocell{\gamma}}
\\
X \ar[dr]_-{i}
&& Y \ar[dl]^-{u}
\\
&Z
}}
\]
we have $u^*i_*=j_*v^*\colon M(X)\to M(Y)$.
\end{enumerate}
\end{Rem}

\begin{Rem} \label{Rem:*notation}
Consistently with the previous remark, we may use the short-hand notation
\[
u^* := [ Y \overset{u}{\gets} X = X] \,, \quad \quad i_* := [Z =  Z \overset{i}{\to} W]
\]
for the `contravariant' and `covariant' spans associated to all 1-cells $u\in \GG$ and $i\in \JJ$. Note that every span can be written as $[\overset{u}{\gets} \,\overset{i}{\to}]= i_*u^*$. This defines two faithful functors 
\[
(-)^*\colon \pih \GG^{\op} \too \spancat(\GG;\JJ) 
\,, \quad \quad 
(-)_*\colon \pih \JJ \too \spancat(\GG;\JJ)
\]
which can be jointly used to formulate a simple universal property for the categories $\spancat(\GG;\JJ)$ and $\spancat_\kk(\GG;\JJ)$; see \eg \cite[A.4]{BalmerDellAmbrogio18pp}.
\end{Rem}

Here is the list of all examples of Mackey functors over spannable pairs~$(\GG;\JJ)$ that will be considered in this article.

\begin{Exa} 
\label{Exa:gpd}
Our first example is $\GG=\JJ=\gpd$, the 2-category of all finite groupoids, functors and natural isomorphisms. This is a spannable (2,1)-category, as one verifies easily (\cf \Cref{Rem:theG=Jcase}).
In this case, \Cref{Def:Mackey-fun} specializes to the notion of \emph{\textup(global\textup) Mackey functor} studied for example in~\cite{Ganter13pp} and~\cite{Nakaoka16}. 
We fully investigate the span category $\spancat_\kk(\gpd)$ in~\Cref{sec:presentation}.
\end{Exa}

The next two examples can also be understood as kinds of biset functors, as will be proved in \Cref{Cor:no-deflations}.

\begin{Exa} \label{Exa:gpdf}
Similarly, we may consider $\GG=\JJ=\groupoidf$, the 2-category of all finite groupoids, \emph{faithful} functors, and natural isomorphisms.
The resulting Mackey functors are also sometimes called \emph{global Mackey functors} (\cf \cite[Ex.\,3.2.6]{Bouc10}).
\end{Exa}

\begin{Exa} \label{Exa:gpd-gpdf}
As a proper example of a spannable \emph{pair}, consider $\GG=\gpd$ and $\JJ=\groupoidf$. The closure properties \eqref{it:J-closures}-\eqref{it:coprods} are easily checked.
The resulting Mackey functors are sometimes called \emph{inflation functors} (\cf \cite[Ex.\,3.2.5]{Bouc10}). 
\end{Exa}

\begin{Exa} \label{Exa:Dress_Mackey_ordinary}
For a fixed finite group~$G$, we may consider the comma 2-category $\GG=\JJ=\gpdG$ of $\groupoidf$ over~$G$. 
The resulting Mackey functors are the classical \emph{Mackey functors for~$G$}. See \Cref{sec:G-Mackey}.
\end{Exa}

\begin{Exa} \label{Exa:fused}
Again for a fixed $G$, we will consider a variant $\gpdGfuz$ of the comma 2-category of \Cref{Exa:Dress_Mackey_ordinary}, where we add all 2-morphisms which exist after forgetting to~$\gpd$. This imposes extra relations on the associated notion of Mackey functors, which turn out to be Bouc's \emph{fused Mackey functors for~$G$}. See \Cref{sec:G-Mackey}.
\end{Exa}

\begin{Rem}
Many more variations are possible and useful. In particular, we may also want to restrict the class of \emph{objects} of~$\GG$. For example, we may consider spans (of various kinds) only between groupoids consisting of coproducts of finite $p$-groups, or coproducts of subquotients of a single fixed group~$G$ (\cf \cite[\S\,9]{Webb00} and \cite[Part III]{Bouc10}).
 \end{Rem}

\section{Presenting spans of groupoids}
\label{sec:presentation}%

In this section we take a hands-on approach to the central object of this article, the category of spans of groupoids. In particular, we provide a presentation by generators and relations as a linear category. This presentation should look familiar to all practitioners of Mackey and biset functors, and indeed will be useful for the comparison results of the last section. The reader who is  easily bored by long lists of relations may read \Cref{Rem:comparison-biset} and then skip ahead to \Cref{sec:G-Mackey}. 
However our intention is not to be soporific, but rather to demonstrate how the usual `combinatorial' approach to Mackey and biset functors works just as well with spans of groupoids. 

Fix a commutative ring~$\kk$. The category we are interested in is 
\[
\spancat_\kk(\gpd) \,,
\]
the $\kk$-linear category of spans in finite groupoids, as defined in \Cref{Cons:1-spansk}.

\begin{Rem} \label{Rem:whySp(gpd)welldef}
\Cref{Cons:1-spansk} can be applied with $\GG:=\gpd$ because the latter is a spannable (2,1)-category in the sense of \Cref{Def:spannable}, as one can easily verify directly. In particular, $\gpd$ has well-behaved (strict) finite coproducts given by disjoint unions, and arbitrary (strict) Mackey squares given by the iso-comma construction of~\Cref{Exa:iso-comma-cats}. Moreover $\gpd$ satisfies the finiteness hypotheses of \Cref{Prop:ftfree-kHoms}, so that the Hom $\kk$-modules of $\spancat_\kk(\gpd)$ are all finitely generated and free.
Let us quickly recall from the previous section that the objects of $\spancat_\kk(\gpd)$ are the finite groupoids, and a morphism $\varphi \colon G\to H$ is a formal $\kk$-linear combination of equivalence classes of spans 
\[[G\xleftarrow{a} P \xrightarrow{b} H]\,,\]
 where $a,b$ are any two functors whose common source is a \emph{connected} groupoid~$P$. In practice, we also use non-connected $P$ but then we must identify $[G\xleftarrow{a} P \xrightarrow{b} H]$ with the sum $\sum_i [G\xleftarrow{a_i} P_i \xrightarrow{b_i} H]$, where $P=\coprod_i P_i$ is any disjoint union decomposition and $a_i,b_i$ are the corresponding components of $a,b$. 
 Here two spans $G\xleftarrow{a} P \xrightarrow{b} H$ and $G\xleftarrow{a'} P' \xrightarrow{b'} H$ are \emph{equivalent} iff there exists an equivalence $f\colon P\xrightarrow{\sim} P'$ making the diagram
\[
\xymatrix@R=.5em@C=4em{
& P \ar[ld]_-{a} \ar[dd]_-{\simeq}^-{f} \ar[rd]^-{b}
\\
G \ar@{}[r]|-{\simeq} && H \ar@{}[l]|-{\simeq}
\\
& P' \ar[lu]^-{a'} \ar[ru]_-{b'}
&
}
\]
commute up to isomorphisms of functors $a'f\simeq a$, $b'f\simeq b$.
Disjoint unions provide the direct sums of objects as well as, when taken at the middle object, the additive structure of each Hom module (see \Cref{Prop:1-spans}).
Composition in $\spancat_\kk(\gpd)$ is induced $\kk$-bilinearly from the composition of two spans $[G\xleftarrow{a} P \xrightarrow{b} H]$ and $[H \xleftarrow{c} Q \xrightarrow{d} K]$ obtained by constructing the iso-comma square~$(b/c)$. 
The groupoid $(b/c)$ will rarely be connected, hence the result of composing two spans is typically a \emph{sum} of classes of connected spans.
\end{Rem}

\begin{Def}[Global Mackey functor]
\label{Def:Mackeyfun-global}
The $\kk$-linear category of \emph{global Mackey functors} is obtained by specializing \Cref{Def:Mackey-fun} to $\GG=\JJ=\gpd$:
\[
\Mackey_\kk(\gpd) := \Fun_\kk(\spancat_\kk(\gpd), \kk\MMod) \,.
\]
This, in the form of \Cref{Rem:Mackey-functor-for-GI},  is precisely the same definition used in~\cite{Ganter13pp} and is equivalent to the definition of Mackey functor used in \cite{Nakaoka16} (via a biequivalence between $\gpd$ and Nakaoka's 2-category $\mathbb S$ of `sets with variable finite group action'; see \cite{Nakaoka16a}).
\end{Def}

\begin{Not}[Elementary spans] 
\label{Not:generators} 
We introduce five families of maps of~$\spancat_\kk(\gpd)$ which we call \emph{elementary spans}. These spans are all connected (\ie their middle object is connected) by virtue of only involving groups.
Let $i\colon H\hookrightarrow G$ denote a subgroup inclusion, $p\colon G\onto G/N$ a quotient homomorphism, and $f \colon G\stackrel{\sim}{\to} G'$ an isomorphism  between finite groups. Then we write
\begin{align*}
& \Res^G_H := i^* = [G \stackrel{i}{\hookleftarrow} H = H]  && \Ind_H^G := i_* = [H = H \stackrel{i}{\hookrightarrow} G] \\
& \Infl_{G/N}^G := p^* = [G/N \stackrel{p}{\twoheadleftarrow} G = G] && \Defl^G_{G/N} := p_* = [G = G \stackrel{p}{\onto} G/N] 
\end{align*}
\vspace{-12pt}
\[
\Iso(f) := (f^{-1})^* = f_* = [G \xleftarrow{f^{-1}} G' = G'] =  [G = G \xrightarrow{f} G'] 
\]
for the corresponding equivalence classes of spans in~$\spancat_\kk(\gpd)$ and call them, respectively, \emph{restrictions}, \emph{inductions}, \emph{deflations}, \emph{inflations} and \emph{isomorphisms}. 
\end{Not}

\begin{Thm}[A presentation of~$\spancat_\kk(\gpd)$]
\label{Thm:pres-Span}
As an additive $\kk$-linear category, $\spancat_\kk(\gpd)$ is generated by the elementary spans of \Cref{Not:generators}; this means that every object  is a direct sum of finite groups, and that every morphism is a matrix of maps between groups, each of which can be obtained as a $\kk$-linear combination of composites of elementary spans. 
Moreover, the ideal of all relations between the maps of $\spancat_\kk(\gpd)$ is generated by the following three families:
\begin{enumerate}[\rm1.] 
\setcounter{enumi}{-1}
\smallbreak
\item \emph{Triviality relations:}
\begin{enumerate}[\rm(a)] 
\item  For every group~$G$: 
\[
\Res^G_G = \id_G \,, \quad \Ind^G_G = \id_G \,, \quad \Defl^G_{G/1} = \id_G, \quad \Infl^G_{G/1} = \id_G \,.
\]
\item For every \emph{inner} automorphism $f \colon G\xrightarrow{\sim}G$:
\[
\Iso(f)  = \id_G \,.
\]
\end{enumerate}
\smallbreak
\item \emph{Transitivity relations:}
\begin{enumerate}[\rm(a)]
\item For all subgroups $K\leq H \leq G$:
\[
\Res^H_K \circ \Res^G_H = \Res^G_K , \quad\quad \Ind_H^G \circ \Ind_K^H = \Ind_K^G  \,.
\]
\item For any two composable isomorphisms $G \xrightarrow{f} G'  \xrightarrow{f'} G''$:
\[
\Iso(f') \circ \Iso(f) = \Iso(f'\circ f)  \,.
\]
\item For any two normal subgroups $N,M\trianglelefteq G$ with $N\leq M$:
\[
\Infl^G_{G/N} \circ \Infl^{G/N}_{G/M} = \Infl^G_{G/M} \,, \quad\quad \Defl^{G/N}_{G/M} \circ \Defl^G_{G/N} = \Defl^G_{G/M}  \,.
\]
\end{enumerate}
\smallbreak
\item \emph{Commutativity relations:}
\begin{enumerate}[\rm(a)]
\item For a subgroup $K\leq G$ and an isomorphism $f\colon G\overset{\sim}{\to} G'$, where $\tilde f\colon K\overset{\sim}{\to}f(K)$ denotes the isomorphism induced by~$f$:
\begin{align*}
\Iso(\tilde f) \circ \Res^G_K &= \Res^{G'}_{f(K)} \circ \Iso(f) \,, \\
\Iso(f) \circ \Ind^G_K &= \Ind^{G'}_{f(K)} \circ \Iso(\tilde f)  \,.
\end{align*}
\item For a normal subgroup $N\trianglelefteq G$ and an isomorphism $f\colon G\xrightarrow{\sim}G'$, where $\overline f\colon G/N \xrightarrow{\sim} G'/f(N)$ is the isomorphism induced by~$f$:
\begin{align*}
\Iso(\overline{f}) \circ \Defl^G_{G/N} &= \Defl^{G'}_{G'/f(N)} \circ \Iso(f)    \,,  \\
\Iso(f)\circ \Infl^G_{G/N} &= \Infl^{G'}_{G'/f(N)} \circ \Iso(\overline{f})  \,.
\end{align*}
\item \emph{(The Mackey formula.)} For any two subgroups $H,K\leq G$,  where $[H\backslash G/ K]$ denotes any full set of representatives of the double cosets $H\backslash G/ K$ and $c_x\colon H^x\cap K \xrightarrow{\sim} H\cap{}^x\!K$ is the conjugation isomorphism $g\mapsto {}^xg = xgx^{-1}$:
\[
\Res^G_H \circ \Ind^G_K = \sum_{x \in [H\backslash G/ K]} \Ind^H_{H\cap {}^x\!K} \circ \Iso(c_x) \circ \Res^K_{H^x\cap K}  \,.
\]
\item For any two normal subgroups $M,N\trianglelefteq G$ such that $M\cap N= 1$
we have:
\[
\Defl^G_{G/N} \circ \Infl^G_{G/M} = \Infl^{G/N}_{G/MN} \circ \Defl^{G/M}_{G/MN}  \,.
\]

\item 
For a subgroup $H\leq G$ and a normal subgroup $N\trianglelefteq G$, where $f\colon H/H\cap N \xrightarrow{\sim} HN/N$ denotes the canonical isomorphism:
\begin{align*}
\Defl^G_{G/N} \circ \Ind^G_H &= \Ind^{G/N}_{HN/N} \circ \Iso(f) \circ \Defl^H_{H/H\cap N}  \,, \\
\Res^G_H \circ \Infl^G_{G/N} &= \Infl^H_{H/H\cap N} \circ \Iso(f^{-1}) \circ \Res^{G/N}_{HN/N}  \,.
\end{align*}

\item
For a subgroup $H\leq G$ and a normal subgroup $N\trianglelefteq G$ such that $N\leq H$:
\begin{align*}
\Res^{G/N}_{H/N}\circ \Defl^G_{G/N} &= \Defl^H_{H/N} \circ \Res^G_{H} \,, \\
\Ind^G_H \circ \Infl^H_{H/N}  &= \Infl^G_{G/N} \circ \Ind^{G/N}_{H/N}  \,.
\end{align*}
\end{enumerate}
\end{enumerate}
\end{Thm}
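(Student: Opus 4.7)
My plan is to introduce the $\kk$-linear category $\cat A$ presented by the stated generators and relations, construct the canonical functor $\Phi\colon \cat A \to \spancat_\kk(\gpd)$ sending each generator to the corresponding elementary span of \Cref{Not:generators}, and then establish (i) that $\Phi$ is well-defined, i.e.\ all listed relations already hold in $\spancat_\kk(\gpd)$, (ii) that $\Phi$ is essentially surjective and full (the ``generation'' claim), and (iii) that $\Phi$ is faithful (the ``completeness'' of the relations). Together (i)--(iii) give the equivalence of the theorem. Throughout, the concrete iso-comma construction of \Cref{Exa:iso-comma-cats} and the basis description of \Cref{Prop:ftfree-kHoms} are the main computational tools.

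Step (i) is a direct check: the triviality and transitivity relations follow from the functoriality of the two faithful embeddings $(-)^*\colon \pih{\gpd}^\op\to \spancat(\gpd)$ and $(-)_*\colon \pih{\gpd}\to \spancat(\gpd)$ of \Cref{Rem:*notation}. Each commutativity relation encodes a single iso-comma computation: the Mackey formula 2.(c)iii is the classical description of the iso-comma groupoid $(i_H / i_K)$ for inclusions $i_H\colon H\hookrightarrow G$, $i_K\colon K\hookrightarrow G$, whose set of connected components is $H\backslash G/K$, each component being isomorphic to $H\cap {}^x\!K$; relation 2.(d) is the analogous computation of the iso-comma square of two quotient maps $G\onto G/M$, $G\onto G/N$ when $M\cap N=1$, yielding the common quotient $G/MN$; and relations 2.(a), 2.(b), 2.(e), 2.(f) are similar direct computations of iso-comma squares involving inclusions, quotients and isomorphisms. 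For step (ii), every finite groupoid is equivalent in $\gpd$ to a disjoint union of groups, so every object of $\spancat_\kk(\gpd)$ is a direct sum of objects in $\Phi(\cat A)$. For morphisms, additivity reduces to a span $[G \xleftarrow{a} P \xrightarrow{b} H]$ with $P$ connected, hence (up to equivalence) a group. Factoring the homomorphisms $a$ and $b$ as surjection, isomorphism, inclusion, one obtains
\[
[G \xleftarrow{a} P \xrightarrow{b} H] \;=\; b_* \circ a^* \;=\; \bigl(\Ind^H_{\img b} \circ \Iso(b') \circ \Defl^P_{P/\ker b}\bigr) \circ \bigl(\Infl^P_{P/\ker a} \circ \Iso((a')^{-1}) \circ \Res^G_{\img a}\bigr),
\]
an explicit composite of elementary spans in the image of $\Phi$.

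The main obstacle is step (iii), faithfulness. I would exploit the basis of \Cref{Prop:ftfree-kHoms}: each Hom $\kk$-module $\spancat_\kk(\gpd)(G,H)$ is freely generated by the isomorphism classes of connected spans. I associate to each such basis element the canonical word $w(a,b)$ displayed above in $\cat A$; using the triviality relation~0.(b) (inner automorphisms act trivially), the commutativity relations 2.(a)--(b) for $\Iso$, and transitivity, one checks that $w(a,b)$ depends only on the equivalence class of the span, so this defines a $\kk$-linear map $\Psi$ on Hom-modules. The identity $\Phi\circ\Psi=\id$ holds by step (ii). The content of faithfulness is $\Psi\circ\Phi=\id$, namely that any word in the generators can be rewritten, modulo the stated relations, as a $\kk$-linear combination of canonical words $w(a,b)$. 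I would prove this by a systematic normalization procedure: the Mackey formula 2.(c)iii is used to push every $\Res$ past every $\Ind$ (producing a sum indexed by double cosets); relations 2.(e)--(f) push every $\Res$ past $\Infl$ and every $\Ind$ past $\Defl$; relation 2.(d), combined with 2.(e)--(f), is used to separate the $\Defl$/$\Infl$ block in the middle; finally the transitivity relations 1.(a), 1.(c) collapse chains of same-typed operators, and 1.(b), 2.(a)--(b) shuffle the $\Iso$'s into their canonical positions. The delicate bookkeeping is the Mackey-formula bookkeeping: one must verify that, after all expansions, the double-coset summands produced by the relations in $\cat A$ match exactly the connected-component decomposition of the relevant iso-comma groupoid computed by $\Phi$. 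This matches the basis of \Cref{Prop:ftfree-kHoms}, completing the proof.
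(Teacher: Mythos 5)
Your proposal is correct and follows essentially the same route as the paper: verify the relations in $\spancat_\kk(\gpd)$ by iso-comma computations (Mackey formula via double cosets, relation 2.(d) via the pullback-vs-iso-comma comparison), prove generation via the canonical six-fold factorization of a connected span, and prove faithfulness by normalizing arbitrary words to that canonical form and checking that equivalent spans yield the same canonical word modulo the relations 0.(b), 1.(b), 2.(a)--(b), invoking the freeness of the Hom modules from \Cref{Prop:ftfree-kHoms}. The only difference is cosmetic: you package faithfulness as an explicit inverse $\Psi$ on the free basis, whereas the paper argues that the quotiented functor restricts to a bijection from a generating set onto a basis of the free target module.
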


\begin{Rem}
As commonly done, we have identified quotients such as $G/1= G$ in 0.(a) or $(G/M)/MN=G/MN$ in~2.(d), \ie we have omitted some (very canonical) isomorphism maps from the presentation. This should not cause any problems.
\end{Rem}

\begin{Rem} \label{Rem:comparison-biset}
The presentation in \Cref{Thm:pres-Span} is almost identical to the presentation provided in \cite[\S\,1.1.3]{Bouc10} for the biset category; for ease of comparison, we have kept the same notations and numbering, except for changing `3.'\ to~`0.'. Indeed, the \emph{only difference} lies in the family~2.(d), where Bouc's 2.(d) is stronger than ours because it holds for any pair $M,N\trianglelefteq G$ of normal subgroups, with no condition on their intersection.
Spans and bisets will be compared in~\Cref{sec:bisets}, but the conclusion should already be clear: bisets can be obtained from spans simply by requiring 2.(d) to hold for all normal subgroups $M,N \trianglelefteq G$, not only those with $M\cap N=1$. We will see that it actually suffices to replace the above 2.(d) by the following \emph{deflativity relation}
\[\Defl^G_{G/N} \circ \Infl^G_{G/N} = \id_{G/N}\]
for all normal subgroups $N\trianglelefteq G$. 
\end{Rem}

\begin{Rem} \label{Rem:pullbacks-and-2d}
The condition $M\cap N= 1$ in relation 2.(d) is equivalent to the square 
\[
\vcenter{
\xymatrix@C=8pt@R=8pt{ 
& G \ar@{->>}[dl] \ar@{->>}[dr] & \\
G/M \ar@{->>}[dr] && G/N \ar@{->>}[dl] \\
& G/MN&
}}
\]
of quotient maps being a pull-back of groups. Indeed, the comparison map to the pull-back  $f\colon G\to G/M \times_{G/MN} G/N=:P$, $g\mapsto f(g)=(gM,gN)$, is injective iff $M\cap N= 1$, and is always automatically surjective: Given $x=(g_1M, g_2N)\in P$, we can write $g_1m_1n_1 = g_2m_2n_2$ for some $m_1,m_2\in M$ and $n_1,n_2\in N$, hence $g_2= g_1m_1n_1n_2^{-1}m_2^{-1}\in g_1mN$ for some $m\in M$ (recall $N$ is normal), hence $g_2N=g_1mN$ and of course also $g_1M=g_1mM$, hence $x= f(g_1m)$, showing surjectivity.
\end{Rem}

To prove the theorem, we can first reduce the problem from groupoids to groups.

\begin{Lem} \label{Lem:add-closure-spans}
Let $\spancat_\kk(\group)\subset \spancat_\kk(\gpd)$ denote (by a slight abuse of notation) the full subcategory whose objects are all finite groups. Then $\spancat_\kk(\gpd)$ is the additive hull of $\spancat_\kk(\group)$, that is: every object of $\spancat_\kk(\gpd)$ is (isomorphic to) a direct sum of objects of $\spancat_\kk(\group)$, and every map decomposes as a matrix of maps in $\spancat_\kk(\group)$.
\end{Lem}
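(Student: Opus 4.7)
The plan is to reduce the claim to two elementary observations about the spannable $(2,1)$-category $\gpd$ and then invoke the generalities on additive categories from \S\,\ref{parag:k-lin}. Throughout, I identify a group with its associated one-object groupoid, as in \S\,\ref{parag:groupoids}.

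First I would dispose of the object-level statement. Every finite groupoid $G$ decomposes as a disjoint union $G = \bigsqcup_{i\in \pi_0(G)} G_i$ of its connected components, and each connected component $G_i$ is equivalent, as a category, to the endomorphism group $\mathrm{Aut}_{G_i}(x_i)$ at any chosen object $x_i\in G_i$ (see \S\,\ref{parag:groupoids}). Picking such representatives yields an equivalence of groupoids $G \xrightarrow{\sim} \bigsqcup_i \mathrm{Aut}_{G_i}(x_i)$. Any equivalence $e\colon G \xrightarrow{\sim} G'$ in $\gpd$ becomes an isomorphism in $\spancat_\kk(\gpd)$, represented by the span $[G = G \xrightarrow{e} G']$: indeed, by \Cref{Prop:1-spans} and \Cref{Rem:*notation}, a quasi-inverse of $e$ together with the unit/counit 2-cells witness that $e_*\circ (e^{-1})_* = \id$ and conversely. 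Combining this with the fact that disjoint unions in $\gpd$ are (strict) coproducts and that coproducts of objects in $\gpd$ provide direct sum diagrams in $\spancat(\gpd)$, and hence in its $\kk$-linearization $\spancat_\kk(\gpd)$ (\Cref{Prop:1-spans} and \Cref{Cons:kk-linearization}), one concludes that $G$ is isomorphic to the direct sum $\bigoplus_i \mathrm{Aut}_{G_i}(x_i)$ in $\spancat_\kk(\gpd)$, and each summand is an object of the subcategory $\spancat_\kk(\group)$.

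For the morphism-level statement, once every object is written as a finite direct sum of groups, the decomposition of morphisms into matrices of components is automatic in any additive category: given $G \simeq \bigoplus_i G_i$ and $H \simeq \bigoplus_j H_j$ with each $G_i, H_j$ a group, any morphism $\varphi \colon G \to H$ in $\spancat_\kk(\gpd)$ is recovered from its components $\varphi_{ji} := \pi_j \circ \varphi \circ \iota_i \colon G_i \to H_j$ via the universal property of the biproduct (see \S\,\ref{parag:k-lin}). Each $\varphi_{ji}$ is a morphism between groups, that is, a morphism in the full subcategory $\spancat_\kk(\group)$, which completes the proof.

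The steps are all routine once the two key facts are in place: equivalences of groupoids become isomorphisms of spans, and disjoint unions realize the direct sums of $\spancat_\kk(\gpd)$. There is no real obstacle here; the only small point to be careful about is that we must work with $\spancat_\kk(\gpd)$ and not attempt to decompose spans $[G \gets P \to H]$ themselves via a non-connected middle object $P$, which is a separate (and independently useful) sum decomposition already recorded in \Cref{Rem:whySp(gpd)welldef}.
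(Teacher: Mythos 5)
Your proof is correct and follows essentially the same route as the paper's: decompose a groupoid up to equivalence as a disjoint union of groups, note that disjoint unions give the direct sums of $\spancat_\kk(\gpd)$, and let the matrix decomposition of morphisms follow from the general properties of biproducts in \S\,\ref{parag:k-lin}. You merely spell out two points the paper leaves implicit (equivalences of groupoids becoming isomorphisms of spans, and the componentwise argument), which is fine.
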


\begin{proof}
Every finite groupoid is equivalent to a finite disjoint union of groups, and disjoint unions provide the direct sums ($=$~biproducts) in~$\spancat_\kk(\gpd)$. The rest follows immediately from the basic properties of direct sums (\S\,\ref{parag:k-lin}).
\end{proof}

\begin{proof}[Proof of \Cref{Thm:pres-Span}]
Since the relations listed in \Cref{Thm:pres-Span} only involve maps between groups, it will suffice to show that $\spancat_\kk(\group)$ is generated \emph{as a $\kk$-linear category} (without using direct sums) by the same generators and relations. The theorem will then follow from \Cref{Lem:add-closure-spans} by taking additive hulls.

Let $\mathcal F$ denote the free $\kk$-linear category generated by the elementary spans and let $\mathcal J$ denote the $\kk$-linear categorical ideal of maps in $\mathcal F$ generated by all the relations in the theorem (\ie by the corresponding differences, of course). Thus by construction we have a $\kk$-linear functor $\Phi\colon \mathcal F\to \mathcal \spancat_\kk(\group)$, and we must show that it induces an equivalence
\[
\overline{\Phi}\colon \mathcal F/\mathcal J\xrightarrow{\sim} \spancat_\kk(\group)\,.
\]

Let us prove that the factorization $\overline{\Phi}$ exists, that is, that the three families of relations are all satisfied in~$\spancat_\kk(\group)$.
This is a straightforward verification, which involves constructing many iso-comma squares in order to compute the composites of various pairs of elementary spans $[a,b]$ and~$[c,d]$:
\begin{equation} \label{eq:comp-Mackey-square}
\vcenter{
\xymatrix@L=1pt@R=14pt@C=18pt{
&& \ar[ld]_-{\tilde c} \ar[rd]^-{\tilde b} 
&&
\\
& \ar[ld]^-a \ar[rd]_-{b} \ar@{}[rr]|{\overset{\sim}{\Ecell}} && \ar[ld]^-{c} \ar[rd]_-d &
\\
 \ar@{..>}[rr] && \ar@{..>}[rr] &&
}}
\end{equation}
Most relations are immediately obtained by composing spans where either $b$ or $c$ is an isomorphism (\eg an identity map), in which case the iso-comma square can be replaced by an equivalent \emph{commutative} square where two parallel sides are isomorphisms (\eg identities); see \cite[Rem.\,2.1.9]{BalmerDellAmbrogio18pp} if necessary.    
This takes care of all transitivity relations 1.\ and of the commutativity relations 2.(a),(b) as well as 2.(e); for the latter, we may also use the equality of maps
$(H \hookrightarrow G\twoheadrightarrow G/N)=(H \twoheadrightarrow H/H\cap N \cong HN/N\hookrightarrow G/N)$.

 The triviality relations 0.(a) are trivial and 0.(b) can be checked as follows. Recall that we are viewing groups as one-object groupoids and homomorphisms as functors (\S\,\ref{parag:groupoids}-\ref{parag:bicats}), so that if the isomorphism $f$ is $c_x\colon G\xrightarrow{\sim} G$, $g \mapsto xgx^{-1}$, conjugation by an element~$x\in G$, then there is a natural isomorphism $\gamma_x\colon \Id_G\Rightarrow c_x$ as in \Cref{Not:gamma_x}. This yields an equivalence
\[
\vcenter{
\xymatrix@R=.5em@C=4em{ 
& G \ar@{=}[dl] \ar[dr]^-{c_x} \ar[dd]^{c_x} & \\
G \ar@{}[r]|{\;\;\;\;\gamma_x\SEcell}  && G  \\
& G \ar@{=}[ul] \ar@{=}[ur] &
}}
\]
between the spans $\Iso(c_x)=[G=G\xrightarrow{c_x} G]$ and $\id_G=[G=G=G]$.

For the remaining relations, we will compose elementary spans where neither $b$ nor $c$ is an isomorphism, \ie where both are non-trivial inclusions or quotient maps 
(and hence where $a$ and $d$ are identities, since these are elementary spans). We will deduce various commutativity relations of the form $c^*b_* = \tilde b_* \tilde c^*$ (in the notation $(-)^*$ and $(-)_*$ of \Cref{Rem:*notation} applied to \eqref{eq:comp-Mackey-square}).

\begin{Lem}  \label{Lem:pullback-vs-isocomma}
If either $b$ or $c$ is a surjection of groups, then the iso-comma square and the pullback square over $\xrightarrow{b}\xleftarrow{c}$ are equivalent.
More precisely, the canonical comparison functor $w$ with components $\langle \pr_1, \pr_2 , \id \rangle$ (see \S\,\ref{parag:isocomma})
\begin{equation*}
\vcenter{
\xymatrix@C=14pt@R=14pt{
&B\times_DC \ar@/_3ex/[ddl]_{\pr_1} \ar@/^3ex/[ddr]^{\pr_2} \ar[d]^-{\simeq}_-{w} & \\
& (b/c) \ar[dl]_{\tilde c} \ar[dr]^-{\tilde b} & \\
B \ar[dr]_b \ar@{}[rr]|{\oEcell{\sim}} && C \ar[dl]^c \\
&D &
}}
\end{equation*}
is an equivalence. 
\end{Lem}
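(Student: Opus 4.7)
The plan is to unfold the constructions of the strict pullback $B\times_D C$ and of the iso-comma $(b/c)$ in the one-object setting (since $B$, $C$, $D$ are groups viewed as groupoids), describe the comparison functor $w$ explicitly on objects and arrows, and then check essential surjectivity and fully faithfulness directly. By \S\,\ref{parag:pseudofun}, showing that $w$ is both essentially surjective and fully faithful will suffice to conclude it is an equivalence.

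First I would spell out the two sides. The pullback $B\times_D C$ is a one-object groupoid whose arrows are the pairs $(\alpha,\beta)\in B\times C$ with $b(\alpha)=c(\beta)$. Using the recipe of \Cref{Exa:iso-comma-cats}, the iso-comma $(b/c)$ has objects the elements $\gamma\in D$ (the possible values of the natural isomorphism between $b(\bullet)$ and $c(\bullet)$), and a morphism $\gamma\to\gamma'$ is a pair $(\alpha,\beta)\in B\times C$ satisfying $c(\beta)\gamma = \gamma' b(\alpha)$. The comparison functor $w=\langle\pr_1,\pr_2,\id\rangle$ sends the unique object of $B\times_D C$ to $\gamma=e\in D$ (since the pullback makes $b\pr_1=c\pr_2$ strictly, the natural isomorphism is the identity), and sends an arrow $(\alpha,\beta)$ to itself, viewed now as a morphism $e\to e$ in $(b/c)$.

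Next I would verify fully faithfulness. The endomorphisms of $e$ in $(b/c)$ are exactly the pairs $(\alpha,\beta)$ with $c(\beta)\cdot e = e\cdot b(\alpha)$, i.e.\ $b(\alpha)=c(\beta)$; these are literally the arrows of $B\times_D C$ and $w$ acts as the identity on them. Since a functor between groupoids which is fully faithful on endomorphisms of one object is fully faithful on the whole connected component of that object, it only remains to show that $(b/c)$ is connected, which will come from essential surjectivity.

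The main step, and the only place where the surjectivity hypothesis enters, is essential surjectivity. Given any object $\gamma\in D$ of $(b/c)$, I want an isomorphism $e\isoto\gamma$, that is a pair $(\alpha,\beta)\in B\times C$ with $c(\beta)=\gamma\,b(\alpha)$, equivalently $\gamma = c(\beta)\,b(\alpha)^{-1}$. If $b$ is surjective, choose $\alpha\in B$ with $b(\alpha)=\gamma^{-1}$ and take $\beta=e_C$; if $c$ is surjective, take $\alpha=e_B$ and choose $\beta\in C$ with $c(\beta)=\gamma$. In either case we obtain the required isomorphism. This shows every object of $(b/c)$ lies in the image-component of $w$, giving both essential surjectivity and connectedness.

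The only subtle point, and the reason the hypothesis is needed, is precisely this last step: without surjectivity of $b$ or $c$, the iso-comma $(b/c)$ can have components indexed by the nontrivial double cosets $\img(b)\backslash D/\img(c)$, and $w$ only hits the trivial one. Assembling the three steps yields the equivalence $w$, as desired.
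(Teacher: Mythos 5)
Your proof is correct and follows essentially the same route as the paper's: the paper also checks essential surjectivity by lifting an object $\delta\in D$ of $(b/c)$ through the surjective homomorphism to get an isomorphism with the image of the unique pullback object, and notes that full faithfulness is immediate (which you verify explicitly on endomorphisms). One tiny remark: since $B\times_D C$ has a single object, full faithfulness of $w$ is already settled by your endomorphism computation, so the appeal to connectedness of $(b/c)$ at that point is unnecessary (connectedness is exactly what your essential-surjectivity step establishes anyway).
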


\begin{proof} 
Indeed, say that $c$ is surjective (the case of $b$ is similar). Then $w$ sends the unique object $\bullet$ of the pullback group $B\times_D C$ to $(\bullet_B,\bullet_C, \id_{\bullet_D})$, and every object $(\bullet,\bullet,\delta)\in \Obj (b/c)$ is isomorphic to $(\bullet,\bullet,\id)$ via a map $(\id, \gamma)$ for any $\gamma\in C$ such that $c(\gamma)=\delta$.
It is immediate to see that $w$ is also fully faithful.

 (More generally, pull-backs and iso-commas along a functor $f$ are equivalent if $f$ has the invertible path lifting property, \ie if it is a fibration in the canonical model structure; see \cite{JoyalStreet93}.)
\end{proof}

This lemma takes care of most remaining cases. 
For instance, a pull-back as in \Cref{Rem:pullbacks-and-2d} is equivalent to the iso-comma square over $G/M\to G/MN \gets G/N$ and the commutivity relation stated in 2.(d) then follows immediately. 
This was the case where $b,c$ are both quotient maps; the cases where one is a quotient and the other an inclusion prove the relations~2.(f). (For the latter, use also that the square
\[
\vcenter{
\xymatrix@!C=8pt@R=9pt{ 
& H \ar@{->>}[dl] \ar[dr]^{\incl} & \\
H/N \ar[dr]_{\incl} && G \ar@{->>}[dl] \\
& G/N&
}}
\]
is automatically a pullback.)

The only remaining relation is the Mackey formula 2.(c), which corresponds to the case where both $b=:j$ and $c=:i$ are inclusions. 
This is proved by decomposing the relevant iso-comma square by the equivalence $v:=\langle ( \incl), ( \incl c_x) , (\gamma_x) \rangle$ below which, concretely, sends the object $\bullet_{H^x\cap K}$ to $(\bullet_K,\bullet_H,x)\in \Obj (j/i)$ and sends $h^x=k\in H^x\cap K$ of the component at $[x]$ to the map $(k,h)$ of~$(j/i)$:
\begin{equation*}
\vcenter{
\xymatrix@C=14pt@R=18pt{
&{\coprod\limits_{[x]\in H\bs G/K}} \!\!\! H^{x}\!\cap K 
 \ar@/_3ex/[ddl]_{(\incl)}
  \ar@/^3ex/[ddr]^{( \incl \,\circ\, c_x )}
   \ar[d]^-{\simeq}_-{v} & \\
& (j/i) \ar[dl]_{} \ar[dr]^-{} & \\
K \ar[dr]_{j \, = \, \incl\;\;}
 \ar@{}[rr]|{\oEcell{\sim}} &&
  H \ar[dl]^{\;\; i \,= \,\incl} \\
&G &
}}
\end{equation*}
Here $\gamma_x$ denotes the natural isomorphism 
\[
\vcenter{
\xymatrix@C=12pt@R=14pt{ 
& H^x\cap K \ar[dl]_\incl \ar[dr]^{\incl \, c_x} & \\
K \ar[dr]_j \ar@{}[rr]|{\oEcell{\gamma_x}} && H \ar[dl]^i \\
& G&
}}
\]
whose sole component is $x\in G$ 
(\cf \cite[Rem.\,2.2.7]{BalmerDellAmbrogio18pp}).

Thus we have proved so far that the functor $\overline{\Phi}\colon \mathcal F/\mathcal J\to \spancat_\kk(\group)$ is well-defined, and we must show that it is an equivalence. 
As $\overline{\Phi}$ is the identity on objects, it only remains to show that it is fully faithful. It is certainly full, because every span 
\begin{equation} \label{eq:typical-span}
\vcenter{
\xymatrix@L=1pt@R=14pt@C=18pt{
& S \ar[ld]_-b \ar[rd]^-{a} &
\\
H \ar@{..>}[rr] &&  G
}}
\end{equation}
of group homomorphisms admits the decomposition
\[
\xymatrix@R=14pt@C=18pt{
&&& 
S
 \ar@{->>}[dl] 
 \ar@{->>}[dr]  
 \ar@/_1ex/[dlll]_-b 
 \ar@/^1ex/[drrr]^-a
&&& \\
H & \;\;D \ar@{>->}[l] & S/N \ar[l]^-{\sim}_-{\ell} && S/M \ar[r]_-\sim^-{f} & B\;\; \ar@{>->}[r] & G
}
\]
where $B:=\Img(a)$, $D:=\Img(b)$, $M:=\Ker(a)$ and $N:=\Ker(b)$, and therefore can be written as the following composite of six elementary spans:
\begin{equation} \label{eq:can-form}
\Ind^G_B \circ \Iso(f) \circ \Defl^S_{S/M} \circ \Infl^S_{S/N} \circ \Iso(\ell^{-1}) \circ \Res^H_D \,.
\end{equation}
That is, the image in $\spancat_\kk(\group)$ of this formal composite is \eqref{eq:typical-span} by construction.

In order to prove faithfulness, let us first notice that the relations of $\mathcal J$ allow us to transform an arbitrary (composable) finite string of elementary spans in $\mathcal F/\mathcal J$ into a linear combination of strings of the form~\eqref{eq:can-form}, so that the latter form a $\kk$-linear generating set. Indeed, the relations 2.(a),(c),(e),(f) let us bring all induction maps to the left of all other elementary spans, where they can be combined in a single $\Ind$ by relation~1.(a).  Similarly, 2.(d),(f) let us drag any deflations to the left of all inflations and restrictions, where they can be combined into a single $\Defl$ by 1.(c). Inflations can be brought to the left of restrictions by 2.(e) and combined into one by~1.(c).
Finally, by 2.(a),(b), isomorphisms end up cumulating in only two spots, as indicated, where they can be combined by~1.(b).

Consider now two such length-six formal strings, one as in \eqref{eq:can-form} and one with primed notations $a',b',S', B',D',M',N',f',\ell'$.
Assume that they have the same image in $\spancat_\kk(\group)$, \ie that there exists an equivalence of spans as follows:
\[
\vcenter{
\xymatrix@R=.5em@C=4em{ 
& S \ar[dl]_b \ar[dr]^-a \ar[dd]^s_\simeq & \\
H \ar@{}[r]|{\;\;\;\;\beta\SEcell}  && G \ar@{}[l]|{\SWcell\alpha\;\;\;\;} \\
& S' \ar[ul]^{b'} \ar[ur]_{a'} &
}}
\]
Here $\alpha\colon a\Rightarrow a's$ and $\beta\colon b\Rightarrow b's$ are some natural isomorphisms, which (since $S$ is a group) are simply given by two elements $x\in G$ and $y\in H$ such that $a's={}^xa$ and $b's={}^yb$. Moreover, $s$ is a group isomorphism.
All this information can be reorganized into a commutative diagram of groups
\begin{equation} \label{eq:standard-form-almost-uniqueness}
\vcenter{
\xymatrix{
H &
 \;\;D \ar@{>->}[l] &
  S/N \ar[l]^-\simeq_-{f} \ar[d]^\simeq_{s_N} &
   S \ar@{->>}[l] \ar@{->>}[r] \ar[d]_s^\simeq &
    S/M \ar[d]^\simeq_{s_M} \ar[r]_-\simeq^-{\ell} &
     B \;\;\ar@{>->}[r] & G \\
H \ar@{<-}[u]_{c_y}^\simeq &
 \;\;D' \ar@{>->}[l] \ar@{<-}[u]^\simeq_{c_y} &
  S'/N' \ar[l]_-{\simeq}^-{f'} &
   S' \ar@{->>}[l] \ar@{->>}[r] &
    S'/M' \ar[r]^-{\simeq}_-{\ell'} &
     B'\;\; \ar@{<-}[u]^{c_x}_\simeq \ar@{>->}[r] &
      G \ar@{<-}[u]_\simeq^{c_x}
}}
\end{equation}
where all maps are either inclusions, quotients or isomorphisms; here $c_x$ and $c_y$ denote conjugation isomorphisms by $x$ and~$y$, respectively, and $s_M,s_N$ are the isomorphisms induced by $s$ on the quotients (indeed $M'=s(M)$ and $N'=s(N)$). Each square corresponds to a relation of~$\mathcal J$, with which we compute:
\begin{align*}
& \phantom{ = \;\;} \Ind^G_{B'} \Iso(\ell') \Defl^{S'}_{S'/M'} \Infl^{S'}_{S'/N'} \Iso(f'^{-1}) \Res^H_{D'}  
 &&  \\
&= \, \Iso(c_x^{-1}) \Ind^G_{B'} \Iso(\ell') \Defl^{S'}_{S'/M'} \Infl^{S'}_{S'/N'} \Iso(f'^{-1}) \Res^H_{D'} \Iso(c_y) 
 && \textrm{by 0.(b)} \\
&= \, \Ind^G_{B} \Iso(c_x^{-1}) \Iso(\ell') \Defl^{S'}_{S'/M'} \Infl^{S'}_{S'/N'} \Iso(f'^{-1}) \Iso(c_y) \Res^H_{D}  
  && \textrm{by 2.(a)}\\
&= \, \Ind^G_{B} \Iso(\ell) \Iso(s_M^{-1}) \Defl^{S'}_{S'/M'} \Infl^{S'}_{S'/N'} \Iso(s_N) \Iso(f^{-1}) \Res^H_{D}  
  && \textrm{by 1.(b)}\\
&= \, \Ind^G_{B} \Iso(\ell) \Defl^{S}_{S/M} \Iso(s^{-1}) \Iso(s) \Infl^{S}_{S/N} \Iso(f^{-1}) \Res^H_{D}
  && \textrm{by 2.(b)} \\
  &= \, \Ind^G_{B} \Iso(\ell) \Defl^{S}_{S/M} \Infl^{S}_{S/N} \Iso(f^{-1}) \Res^H_{D}
  && \textrm{by 1.(b).}
\end{align*}
Hence the two given parallel length-six composites are already equal in~$\mathcal F/\mathcal J$. 

Thus the $\kk$-linear map $\overline{\Phi}\colon \mathcal F/\mathcal J(H,G)\to \spancat_\kk(\group)(H,G)$ can be restricted to a bijection between a generating set of  $\mathcal F/\mathcal J(H,G)$ and a generating set of $\spancat_\kk(\group)(H,G)$. As the latter $\kk$-module is free (\Cref{Rem:whySp(gpd)welldef}), this map is an isomorphism.
\end{proof}

\section{$G$-sets vs groupoids}
\label{sec:G-Mackey}%

Let us fix a finite group $G$ throughout this section. We begin by  recalling from \cite[App.\,B]{BalmerDellAmbrogio18pp} how to use groupoids in order to capture the classical notion of Mackey functor for~$G$.

First of all, recall that the category of $\kk$-linear Mackey functors for $G$, which first (implicitly) appeared in~\cite{Green71}, can be equivalently defined as the functor category over spans of $G$-sets:
\begin{equation} \label{def:Mack(G)}
\Mackey_\kk(G) := \Fun_\kk( \spancat_\kk (G\sset) , \kk\MMod)\,.
\end{equation}
This approach is due to Dress~\cite{Dress73} and Lindner~\cite{Lindner76}. 

\begin{Rem}
Here $G\sset$ denotes the ordinary category of finite left $G$-sets, seen as a discrete 2-category. It makes sense to apply to $G\sset$ the span category construction $\spancat_\kk(-)$ of \Cref{Cons:1-spansk} because it is a spannable 2-category as in \Cref{Def:spannable}. Indeed, it is a (2,1)-category (like any discrete 2-category) which is extensive (like any elementary topos) and which moreover has arbitrary Mackey squares, because in an ordinary category the latter are the same thing as the usual pullbacks.
\end{Rem}

\begin{Def}[Groupoids faithfully embedded in~$G$] \label{Def:gpdG}
Let $\groupoidf$ denote the 2-category of finite groupoids, \emph{faithful} functors between them, and natural transformations.
We will consider the comma 2-category $\gpdG$ of $\groupoidf$ over~$G$ (as in~\S\,\ref{parag:comma2cat}), and call its objects $(H,i_H\colon H\to G)$ \emph{groupoids faithfully embedded in~$G$}. 
\end{Def}

\begin{Def}[Transport groupoid]
\label{Def:transport-gpd}
The \emph{transport groupoid} $G\ltimes X$ of a $G$-set $X$ is the groupoid with set of objects $\Obj (G\ltimes X):=X$ and where an arrow $x\to y$ is a pair $(g,x)\in G\times X$ such that $gx=y$ (we will occasionally also write $g\colon x\to y$ for simplicity). Composition is induced by the multiplication in $G$ via $(h,y)(g,x):=(hg,x)$. 
A $G$-equivariant map $f\colon X\to Y$ induces a faithful (!) functor $G\ltimes f\colon G\ltimes X\to G\ltimes Y$ which sends $x$ to $f(x)$ and $(g,x)$ to $(g,f(x))$. 
The transport groupoid comes equipped with a faithful functor $\pi_X\colon G\ltimes X\to G$ mapping $x\mapsto \bullet$ and $(g,x)\mapsto g$. 
(Note that the latter is just $G\ltimes (X\twoheadrightarrow G/G)$ followed by the obvious isomorphism $G\ltimes G/G = G$.)
\end{Def}

\begin{Prop}[{\cite[Prop.\,B.08]{BalmerDellAmbrogio18pp}}]
\label{Prop:bieq-Gsets-gpdG}
The transport groupoid of \Cref{Def:transport-gpd} defines a 2-functor (strict pseudo-functor) $G\ltimes - \colon G\sset \to \gpdG$, $X\mapsto (G\ltimes X, \pi_X)$, which is a biequivalence between the discrete 2-category of $G$-sets and the comma 2-category of groupoids faithfully embedded in~$G$.
\qed
\end{Prop}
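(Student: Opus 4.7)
The proof proceeds in three stages: verifying that $G\ltimes-$ is a strict 2-functor, checking it is essentially surjective on objects, and checking it induces equivalences of Hom categories. These together yield a biequivalence by the characterization of \S\,\ref{parag:pseudofun}.

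\emph{Well-definedness.} Since the projection $\pi_X\colon G\ltimes X\to G$ sends a morphism $(g,x)$ to~$g$, and a morphism in $G\ltimes X$ is determined by its source and its image in~$G$, the functor $\pi_X$ is faithful, so $(G\ltimes X,\pi_X)\in\gpdG$. For a $G$-equivariant map $f\colon X\to Y$, the induced functor $G\ltimes f$ is faithful and satisfies $\pi_Y(G\ltimes f)=\pi_X$ strictly, so we pair it with the identity 2-cell in~$\gpdG$. Strict functoriality is then immediate from the definitions.

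\emph{Essential surjectivity on objects.} Given $(H,i\colon H\rightarrowtail G)$ with $i$ faithful, choose a representative $h_\alpha$ in each isomorphism class of $\Obj(H)$ and set $K_\alpha:=i(\Aut_H(h_\alpha))\leq G$. Define $X:=\coprod_\alpha G/K_\alpha$. The standard equivalence $G\ltimes(G/K)\xrightarrow{\sim} K$ (pick the object $eK$ and restrict) respects the projections to~$G$, and the composite $\coprod_\alpha G\ltimes(G/K_\alpha)\simeq\coprod_\alpha K_\alpha\simeq H$ of equivalences in $\gpdG$ does the job.

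\emph{Equivalence of Hom categories.} We must show that for all $X,Y\in G\sset$ the functor
\[
G\ltimes-\colon G\sset(X,Y)\longrightarrow\gpdG\bigl((G\ltimes X,\pi_X),(G\ltimes Y,\pi_Y)\bigr)
\]
is an equivalence, where the source is discrete. Faithfulness and fullness (on the discrete source) amount to checking that a 2-cell $\beta\colon(G\ltimes f,\id)\Rightarrow(G\ltimes f',\id)$ in~$\gpdG$ forces $f=f'$ and $\beta=\id$: the condition $\id\circ(\pi_Y\beta)=\id$ gives $\pi_Y\beta=\id$, and since $\pi_Y$ is faithful, each component $\beta_x=\id_{f(x)}$, so $f(x)=f'(x)$. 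For essential surjectivity of the Hom functor, given $(F,\theta)$ with $F\colon G\ltimes X\to G\ltimes Y$ faithful and $\theta\colon\pi_YF\overset{\sim}{\Rightarrow}\pi_X$ invertible, view each component $\theta_x\in G$, and define $f\colon X\to Y$ by $f(x):=\theta_x\cdot F(x)$. Writing $F(g,x)=(g',F(x))$ with $g'F(x)=F(gx)$, the naturality of $\theta$ reads $\theta_{gx}g'=g\theta_x$, whence
\[
f(gx)=\theta_{gx}F(gx)=\theta_{gx}g'F(x)=g\theta_xF(x)=g\cdot f(x),
\]
so $f$ is $G$-equivariant. The morphisms $\alpha_x:=(\theta_x,F(x))\colon F(x)\to f(x)$ in $G\ltimes Y$ define a natural iso $\alpha\colon F\Rightarrow G\ltimes f$ with $\pi_Y\alpha=\theta$, exhibiting the required 2-iso $(F,\theta)\cong(G\ltimes f,\id)$ in~$\gpdG$.

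\emph{Main obstacle.} The only subtle point is the ``twisting'' step that absorbs the non-trivial natural isomorphism~$\theta$ into the 1-cell to yield a strict $G$-map: a functor over $G$ \emph{up to isomorphism} is not automatically (induced by) a $G$-equivariant map, but the formula $f(x)=\theta_x F(x)$ rectifies this, and the faithfulness of $\pi_Y$ ensures that the rectification is uniquely determined. Everything else is a routine unwinding of the definitions of comma 2-category and transport groupoid.
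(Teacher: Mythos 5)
Your proposal is correct and takes the expected route: the paper itself gives no argument for this statement (it defers to \cite[Prop.\,B.0.8]{BalmerDellAmbrogio18pp}), and your three steps — strict 2-functoriality with $\pi_Y\circ(G\ltimes f)=\pi_X$ on the nose, essential surjectivity via the orbit/stabilizer decomposition $H\simeq\coprod_\alpha K_\alpha\simeq G\ltimes\bigl(\coprod_\alpha G/K_\alpha\bigr)$ over~$G$, and Hom-wise equivalence by rectifying $(F,\theta)$ to the $G$-map $f(x)=\theta_x\cdot F(x)$ with the 2-cell $\alpha_x=(\theta_x,F(x))$ — are exactly the standard argument behind the cited result. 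I see no gaps; the only points left implicit (that an internal equivalence in the comma 2-category is detected in $\gpd$, and the naturality check for $\alpha$) are routine.
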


Like any biequivalence, this one preserves Mackey squares and generally shows that $\gpdG$, just like $G\sset$, is a spannable 2-category. One can also check the latter by hand, for instance the fact that the iso-comma squares of $\gpd$ induce iso-comma squares in~$\gpdG$. 

By \Cref{Prop:functoriality-of-spancat}, if we  apply the span construction of \Cref{Cons:1-spansk} to the biequivalence of \Cref{Prop:bieq-Gsets-gpdG} we get:

\begin{Cor} \label{Cor:span-equiv-Gsets-gpdG}
The transport groupoid induces an equivalence 
\[ \spancat_\kk(G\sset)\overset{\sim}{\to} \spancat_\kk(\gpdG) \]
of $\kk$-linear categories of spans. \qed
\end{Cor}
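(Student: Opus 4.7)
The plan is to derive this corollary as a direct application of the functoriality statement in Proposition~\ref{Prop:functoriality-of-spancat} to the biequivalence of Proposition~\ref{Prop:bieq-Gsets-gpdG}. In other words, I would check that the transport groupoid 2-functor $G\ltimes -\colon G\sset\to \gpdG$ qualifies as a \emph{morphism of spannable pairs} in the sense of Proposition~\ref{Prop:functoriality-of-spancat}; once this is done, the last clause of that proposition immediately yields the desired equivalence on span categories, since $G\ltimes -$ is a biequivalence.

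More concretely, the first step is to observe that both $G\sset$ and $\gpdG$ (each taken with $\JJ$ equal to itself) satisfy the hypotheses of Definition~\ref{Def:spannable}. For $G\sset$ this is recalled in the remark preceding Definition~\ref{def:Mack(G)}: it is an elementary topos, hence extensive, seen as a locally discrete (2,1)-category, and its Mackey squares are just ordinary pullbacks. For $\gpdG$, one could check extensivity and the existence of Mackey squares directly (the iso-comma construction of Example~\ref{Exa:iso-comma-cats} descends from $\groupoidf$ to $\gpdG$ because faithful functors compose and are stable under iso-comma pullback), but it is conceptually cleaner to simply transport these properties across the biequivalence of Proposition~\ref{Prop:bieq-Gsets-gpdG}: all the relevant structures (initial object, binary coproducts, iso-comma/Mackey squares) are defined via 2-universal properties and are therefore preserved and reflected by biequivalences.

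The second step is to verify that $G\ltimes -$ preserves finite coproducts and Mackey squares up to equivalence. Again this is automatic for any biequivalence, since biequivalences preserve all bicategorical limits and colimits characterized by pseudo-universal properties, and both coproducts (\S\,\ref{parag:coprods-pb}) and Mackey squares (\S\,\ref{parag:isocomma}) fall into this class. (If one prefers an explicit check, the disjoint union of $G$-sets $X\sqcup Y$ is sent to $G\ltimes(X\sqcup Y)=(G\ltimes X)\sqcup(G\ltimes Y)$ on the nose, and a pullback square of $G$-sets maps to a pullback, hence Mackey, square of groupoids over $G$.) Since in both cases $\JJ$ is the whole 2-category, the condition $\cat F(\JJ)\subseteq \JJ'$ is trivial.

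With these verifications in hand, Proposition~\ref{Prop:functoriality-of-spancat} applies and gives a $\kk$-linear functor $\spancat_\kk(G\ltimes -)\colon \spancat_\kk(G\sset)\to \spancat_\kk(\gpdG)$ which, by the final sentence of that proposition, is an equivalence because $G\ltimes -$ is a biequivalence. There is no real obstacle here; the only point requiring a moment of care is to remember that we work with Mackey squares (pseudo-pullbacks) rather than strict pullbacks, so that the biequivalence automatically preserves them---whereas it would \emph{not} in general preserve strict pullbacks of categories.
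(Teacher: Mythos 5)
Your proposal is correct and follows exactly the paper's route: the text preceding the corollary notes that the biequivalence of Proposition~\ref{Prop:bieq-Gsets-gpdG} preserves Mackey squares and shows $\gpdG$ is spannable, and the corollary is then obtained precisely by applying Proposition~\ref{Prop:functoriality-of-spancat} to that biequivalence. Your additional verifications (extensivity, preservation of coproducts and Mackey squares) are the same points the paper delegates to ``like any biequivalence'' and ``one can also check by hand''.
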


Finally, by taking categories of $\kk$-linear functors:

\begin{Cor} \label{Cor:equivalence-MackeyG}
The transport groupoid induces a $\kk$-linear equivalence 
\[
\Mackey_\kk(G) \overset{\sim}{\gets} \Mackey_\kk(\gpdG)
\]
between the classical category of Mackey functors for $G$ and the category of generalized Mackey functors $\Mackey_\kk(\GG)$ (\Cref{Def:Mackey-fun}) applied to the 2-category $\GG=\gpdG$ of groupoids faithfully embedded in~$G$ (\Cref{Def:gpdG}). \qed
\end{Cor}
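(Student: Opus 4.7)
My plan is to deduce this corollary as an essentially formal consequence of the previous Corollary~\ref{Cor:span-equiv-Gsets-gpdG}. The point is that both sides are by definition $\kk$-linear functor categories into $\kk\MMod$: recall that $\Mackey_\kk(G)$ is defined in~\eqref{def:Mack(G)} as $\Fun_\kk(\spancat_\kk(G\sset), \kk\MMod)$, while $\Mackey_\kk(\gpdG)=\Fun_\kk(\spancat_\kk(\gpdG), \kk\MMod)$ by \Cref{Def:Mackey-fun}. So the only thing we need is the 2-functoriality of $\Fun_\kk(-,\kk\MMod)$ in its first variable.

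Concretely, I would first recall the general fact that, for any $\kk$-linear functor $\Phi\colon \cat C\to \cat D$ between $\kk$-linear categories, precomposition yields a $\kk$-linear functor
\[
\Phi^*\colon \Fun_\kk(\cat D,\kk\MMod)\longrightarrow \Fun_\kk(\cat C,\kk\MMod),\qquad M\mapsto M\circ\Phi ,
\]
and that if $\Phi$ is a $\kk$-linear equivalence then so is $\Phi^*$: any $\kk$-linear quasi-inverse $\Psi\colon \cat D\to \cat C$ together with $\kk$-linear natural isomorphisms $\Psi\Phi\cong \Id_{\cat C}$ and $\Phi\Psi\cong \Id_{\cat D}$ produces, by precomposition, a $\kk$-linear quasi-inverse $\Psi^*$ with the required natural isomorphisms. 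This is a routine, purely formal verification that I would not spell out.

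I would then apply this general fact to the equivalence $\Phi:=\spancat_\kk(G\ltimes -)\colon \spancat_\kk(G\sset)\overset{\sim}{\to}\spancat_\kk(\gpdG)$ provided by Corollary~\ref{Cor:span-equiv-Gsets-gpdG}, obtaining a $\kk$-linear equivalence
\[
\Mackey_\kk(G)=\Fun_\kk(\spancat_\kk(G\sset),\kk\MMod)\;\overset{\sim}{\longleftarrow}\;\Fun_\kk(\spancat_\kk(\gpdG),\kk\MMod)=\Mackey_\kk(\gpdG),
\]
which is precisely the statement. There is no obstacle to speak of: all the real content is already packaged into Corollary~\ref{Cor:span-equiv-Gsets-gpdG} (hence ultimately into the biequivalence of \Cref{Prop:bieq-Gsets-gpdG} and the functoriality of \Cref{Prop:functoriality-of-spancat}), and the remaining step is the standard 2-functoriality of taking $\kk$-linear functor categories.
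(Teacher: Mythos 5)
Your proposal is correct and matches the paper's own argument: the paper likewise deduces the corollary from \Cref{Cor:span-equiv-Gsets-gpdG} simply ``by taking categories of $\kk$-linear functors,'' i.e.\ by precomposing with the equivalence $\spancat_\kk(G\ltimes -)$. Your spelled-out verification that precomposition with a $\kk$-linear equivalence gives a $\kk$-linear equivalence of functor categories is exactly the implicit standard step.
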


\begin{center}$***$\end{center}

For the remainder of this section, we adapt the above ideas in order to capture the \emph{fused} Mackey functors of~\cite{Bouc15} (whose definition will be recalled later). 

First, we `correct' the fact that the forgetful functor $\gpdG\to \gpd$ is not 2-full:

\begin{Def}[The fused comma category]
\label{Def:gpdG-fused}%
\index{$\gpdGfuz$}%
We consider the following variant $\gpdGfuz$ of the comma 2-category $\gpdG$ considered above, which we  call the 2-category of \emph{fused groupoids embedded into~$G$}. The 2-category $\gpdGfuz$ has the same 0-cells and 1-cells as $\gpdG$,
but has the larger class of 2-cells obtained by ignoring the compatibility requirement with the embeddings. Explicitly, the 0-cells of $\gpdGfuz$ are finite groupoids embedded into~$G$, \ie pairs $(H,i_H)$ with $i_H\colon H\into G$ faithful. The 1-cells $(H,i_H)\to (K,i_K)$ are pairs $(u\colon H\into K , \alpha_u\colon i_K u\isoEcell i_H)$. The new 2-cells $(u\colon H\to K , \alpha_u)\Rightarrow(v\colon H\to K , \alpha_v)$ are simply 2-cells $\alpha\colon u\Rightarrow v$ in~$\gpd$, without further condition. 
There are obvious inclusion and forgetful 2-functors
\[
\gpdG \hookrightarrow \gpdGfuz \to \gpd
\]
which are the identity on objects.
\end{Def}

We now introduce a 2-category which is to $G\sset$ what $\gpdGfuz$ is to~$\gpdG$:

\begin{Def}[The 2-category of fused $G$-sets]
\label{Def:G-set-fuz}%
Let $G^c$ denote the set $G$ equipped with the conjugation left $G$-action $(g,x)\mapsto {}^gx=gxg^{-1}$.
We define a 2-category $G\ssetfuz$ \emph{of fused $G$-sets}, whose 0-cells and 1-cells are those of $G\sset$, namely finite $G$-sets and $G$-equivariant maps, and whose 2-cells $\tau\colon f_1\Rightarrow f_2\colon X\to Y$ are given by $G$-maps $\tau\colon X\to G^c$ such that 
\[
\tau*f_1=f_2,
\]
where the notation means $\tau(x)\cdot f_1(x)=f_2(x)$ for all~$x\in X$. (Such $G$-maps $\tau\colon X\to G^c$ are called \emph{twisting maps}.) Vertical composition of 2-cells in~$G\ssetfuz$ is defined by multiplication in~$G$, that is, $(\tau'\cdot \tau)(x)=\tau'(x)\cdot \tau(x)$ for all~$x\in X$, and horizontal composition of 2-cells
\begin{equation} \label{eq:hor-comp}
\vcenter{
\xymatrix{
X \ar@/^3ex/[r]^-{f_1} \ar@/_3ex/[r]_-{f_2} \ar@{}[r]|{\Scell\;\tau} &
 Y \ar@/^3ex/[r]^-{f_3} \ar@/_3ex/[r]_-{f_4} \ar@{}[r]|{\Scell\;\sigma} &
 Z
}}
\quad = \quad
\vcenter{
\xymatrix{
X \ar@/^3ex/[r]^-{f_3\circ f_1} \ar@/_3ex/[r]_-{f_4\circ f_2} \ar@{}[r]|{\Scell\;\sigma\circ \tau} &
 Y
}}
\end{equation}
is given by $(\sigma \circ \tau)(x) = \tau(x)\cdot \sigma(f_1(x))$ for all $x\in X$. The identity 2-cell $\id_f$ of a 1-cell $f\colon X\to Y$ is given by the constant map $X\to G^c$, $\id_f (x)=e$ for all $x\in X$.
\end{Def}

\begin{Prop} \label{Prop:G-set-fuz}
The construction in \Cref{Def:G-set-fuz} yields a well-defined (2,1)-category $G\ssetfuz$ whose 1-truncation $\pih(G\ssetfuz)$ is equal to Bouc's ordinary \emph{category of fused $G$-sets}, $G\ssetfused$, as defined in \cite{Bouc15}.
Moreover, this (2,1)-category has Mackey squares for all cospans, provided by the usual pullback squares of $G$-sets, and is in fact a spannable (2,1)-category (\Cref{Def:spannable}).
\end{Prop}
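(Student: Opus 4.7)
I will proceed in three stages, corresponding to the three claims: 2-category structure (plus identification of the 1-truncation), existence of Mackey squares as ordinary pullbacks, and the remaining spannability axioms.

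\emph{Stage 1: The (2,1)-category axioms and 1-truncation.} I would first verify that the data of Definition~4.6 satisfies the 2-category axioms by routine pointwise computations in $G$: vertical composition is associative and unital because $G$ is a group with identity $e$; horizontal composition as defined in~\eqref{eq:hor-comp} is functorial with respect to vertical composition; and the interchange law reduces to a pointwise identity in $G$ involving the defining twist $\tau * f_1 = f_2$. Every 2-cell $\tau\colon X\to G^c$ is invertible, the inverse being $\tau^{-1}(x):=\tau(x)^{-1}$, which is still $G$-equivariant because the conjugation action on $G^c$ is by group automorphisms; so we get a (2,1)-category. For the 1-truncation: two parallel $G$-maps $f_1,f_2\colon X\to Y$ become equal in $\pih(G\ssetfuz)$ iff there exists some $\tau\colon X\to G^c$ with $\tau*f_1=f_2$, which is by definition the equivalence relation used to define Bouc's category $G\ssetfused$ in~\cite{Bouc15}.

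\emph{Stage 2: Mackey squares via pullbacks.} Given a cospan $X\xrightarrow{u}Z\xleftarrow{v}Y$ in $G\sset$, form the usual pullback $P:=X\times_Z Y$ with projections $p,q$; then $up=vq$ strictly, so the candidate square carries the identity 2-cell. By the characterization in Remark~2.21, I must show that for every $T$ the comparison functor $G\ssetfuz(T,P)\to\big(G\ssetfuz(T,u)/G\ssetfuz(T,v)\big)$ is an equivalence. For essential surjectivity, given a triple $(r,s,\sigma)$ with $\sigma(x)\cdot u(r(x))=v(s(x))$, I define $t\colon T\to P$ by $t(x):=(\sigma(x)\cdot r(x),\,s(x))$; this lands in $P$ by the defining relation on $\sigma$, and it is $G$-equivariant because $\sigma(gx)=g\sigma(x)g^{-1}$ (as $\sigma$ is equivariant into $G^c$) and $r(gx)=gr(x)$. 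Then $\sigma\colon r\Rightarrow pt$ and $\id\colon s\Rightarrow qt$ exhibit $(pt,qt,\id)\cong(r,s,\sigma)$ inside the iso-comma. For fullness and faithfulness one uses the crucial (and easy) observation that whiskering a 2-cell $\alpha\colon T\to G^c$ on either side by any 1-cell leaves its underlying map $T\to G^c$ unchanged; so the comparison is essentially the identity on 2-cells (modulo relabeling source and target), and each axiom collapses to an elementary check. I expect this stage to be the main technical obstacle, although it is really only a careful unwinding of definitions.

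\emph{Stage 3: Extensivity and conclusion.} The 2-category $G\ssetfuz$ is essentially small since $G\sset$ is. Finite coproducts are the underlying disjoint unions of $G$-sets, which remain coproducts in $G\ssetfuz$ because a 2-cell $\tau\colon X_1\sqcup X_2\to G^c$ decomposes into its restrictions to each $X_i$, and because a 2-cell into $Y=Y_1\sqcup Y_2$ preserves components (if $f_1(x)\in Y_i$ then $f_2(x)=\tau(x)f_1(x)\in Y_i$ too, since $Y_i$ is $G$-stable). Using the Bunge--Lack criterion recalled in Definition~3.2, extensivity now follows because Mackey squares were constructed in Stage~2 as ordinary $G$-set pullbacks (hence in particular along coproduct inclusions), and the characterization of coproduct decompositions in terms of Mackey squares~\eqref{eq:PB-vs-coprod} reduces to the well-known extensivity of the 1-category $G\sset$. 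Since $\JJ=\GG$ here, axioms~\eqref{it:J-closures}--\eqref{it:coprods} of Definition~3.3 are automatic (Remark~3.5), so $G\ssetfuz$ is a spannable (2,1)-category, completing the proof.
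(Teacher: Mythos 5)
Your proposal is correct and follows essentially the same route as the paper: the well-definedness and truncation claims are the same routine checks, and your Mackey-square verification is exactly the paper's argument — build the comparison map into the pullback by twisting the first component by $\sigma$, then use the observation that post-whiskering a 2-cell by a 1-cell leaves the underlying map $T\to G^c$ unchanged to get full faithfulness. The only minor divergence is extensivity, which the paper establishes by directly checking that the comparison pseudo-functor on comma 2-categories is a biequivalence, whereas you invoke the Bunge--Lack pullback criterion; both hinge on the same fact that 2-cells decompose over, and preserve, coproduct components, so either route works.
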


\begin{proof}
The first claim is a direct verification from the definitions which we leave to the reader (\cf \Cref{Rem:2-cat-check}).
Bouc \cite{Bouc15} defines $G\ssetfused$ as the quotient category of $G\sset$ obtained by identifying any two parallel maps $f_1,f_2$ such that $\tau*f_1=f_2$ for some twisting map~$\tau$ (the resulting relation turns out to be a congruence); clearly, this is precisely the same as the truncated category $\pih(G\ssetfuz)$.

Let us verify the claim about Mackey squares. Consider a pullback square of $G$-sets and view it inside $G\ssetfuz$:
\begin{equation*}
\vcenter{
\xymatrix@C=14pt@R=14pt{
& X\times_ZY \ar[dl]_{p} \ar[dr]^-{q} & \\
X \ar[dr]_f \ar@{}[rr]|{\underset{\sim}{\oEcell{\id}}} && Y \ar[dl]^g \\
&Z &
}}
\end{equation*}
By \Cref{Rem:iso-comma-Mack-characterization}, we must show that for any $G$-set $T$ the functor 
\begin{align*}
(p,q,\id)_*\colon G\ssetfuz(T, X\times_ZY) &\longrightarrow G\ssetfuz(T,f)/G\ssetfuz(T,g)\\ 
(T\xrightarrow{u}X\times_ZY) &\longmapsto (pu,qu, fpu \overset{\id}{\Rightarrow} gqu)
\end{align*}
is an equivalence of categories.

Let $(t, s,\gamma\colon ft \Rightarrow gs)$ be any object of the target iso-comma category; thus $\gamma\colon T\to G^c$ is a $G$-map such that $\gamma*ft=gs$.
Then for all $x\in T$ we compute
\[
(f\circ (\gamma * t))(x) = f(\gamma(x)\cdot t(x)) = \gamma(x)\cdot f(t(x)) = (\gamma * (f \circ t))(x) \overset{\textrm{hyp.}}{=} (g\circ s)(x)
\]
showing that $f (\gamma*t)= gs$, so that we may define a $G$-map $u\colon T\to X\times_ZY$ into the pullback with components $(\gamma*t,s)$. 
The square of 2-morphisms of $G\ssetfuz$
\[
\xymatrix@L=6pt{
ft
 \ar@{=>}[r]^-{\gamma}
 \ar@{=>}[d]_{f\gamma} & 
gs \ar@{=>}[d]^{\id} \\
fpu \ar@{=>}[r]^-{\id} & 
gqu
}
\]
commutes by the calculation ($x\in T$)
\[
(f\circ \gamma)(x) 
= (\id_f \circ \gamma)(x) 
= \gamma(x) \cdot \id_f(t(x)) 
= \gamma(x)
\]
showing that the pair $(\gamma,\id)$ is a well-defined isomorphism $(t,s,\gamma)\overset{\sim}{\to}(pu,qu,\id)$. This proves the essential surjectivity of $(p,q,\id)_*$.

To show it is fully faithful, consider two $G$-maps $u,v\colon T\to X\times_ZY$. A morphism $(pu,qu,\id)\to (pv,qv,\id)$ between their images is a pair $(\varphi,\psi)$ of 2-cells $\varphi\colon pu \Rightarrow pv$ and $\psi\colon qu \Rightarrow qv$ of $G\ssetfuz$ such that $f\varphi = g\psi$, that is such that
\[
\varphi (x) 
= \varphi(x) \cdot \id_f(pu(x)) 
= (f\circ \varphi)(x)
\overset{\textrm{hyp.}}{=} (g\circ \psi)(x)
= \psi(x) \cdot \id_g( qu(x)) 
= \psi(x)
\]
for all $x\in T$, so that in fact $\varphi$ and $\psi$ must be the same map $T\to G^c$. Moreover, by looking at the pullback components we obtain $\varphi* u= v$:
\begin{align*}
(\varphi*u)(x) 
&= \varphi(x) \cdot u(x) 
= \varphi(x) \cdot \big( pu(x), qu(x)\big)
=  \big(\varphi(x) \cdot  pu(x), \varphi(x) \cdot  qu(x)\big) \\
&= \big( pv (x) ,qv (x) \big)  = v(x)
\end{align*}
In other words, $\varphi$ also defines a 2-cell $u\Rightarrow v$ such that $(p,q,\id)_*(\varphi)=(\varphi, \psi)$, and we see that $(p,q,\id)_*$ induces a bijection on each Hom set as claimed.
 
By \Cref{Rem:theG=Jcase}, it remains to show that $G\ssetfuz$ is an extensive bicategory (\Cref{Def:extensive}).
Indeed, the usual coproducts of $G$-sets are also (strict) coproducts in $G\ssetfuz$, and the 2-functor
\[
G\ssetfuz_{/X} \times G\ssetfuz_{/Y} \longrightarrow G\ssetfuz _{/ X \sqcup Y} \,, \quad (A\overset{a}{\to} X, B \overset{b}{\to} Y) \mapsto (A\sqcup B \xrightarrow{a\sqcup b}X\sqcup Y)
\]
is a biequivalence; both follow easily from the fact that the underlying ordinary category $G\sset$ is extensive (see \Cref{Rem:extensive-cats}), together with the fact that a 2-cell $\varphi\colon (a\sqcup b)\Rightarrow (a'\sqcup b')$ in $G\ssetfuz$, that is a $G$-map $A\sqcup B\to G^c$ such that $\tau*(a\sqcup  b)=a'\sqcup b'$, amounts to the same thing as a pair of 2-cells $\tau_A \colon a \Rightarrow a'$ and $\tau_B\colon b\Rightarrow b'$, that is $G$-maps $\tau_A\colon A\to G^c$ and $\tau_B\colon B\to B^c$ such that $\tau_A*a=a'$ and $\tau_B*b=b'$.
\end{proof}

\begin{Rem}
It is apparent from the proof that the Mackey squares of \Cref{Prop:G-set-fuz} are not strict in general, \ie they are not iso-comma squares. Indeed, it seems that the 2-category $G\ssetfuz$ does not admit any nontrivial iso-comma squares.
\end{Rem}

The two 2-categories we have just defined are actually equivalent:

\begin{Thm} \label{Thm:fusedMackey_vs_gpdGfuz}
For every finite group~$G$, the transport groupoid 2-functor $G\ltimes -$ lifts to a biequivalence of spannable (2,1)-categories
\[
\xymatrix{
G\sset \ar@{->}[d]_{\incl} \ar[r]^-{\sim}_-{\textrm{\textup{Prop.\,\ref{Prop:bieq-Gsets-gpdG}}}}   & \gpdG  \ar@{->}[d]^{\incl}  \\
G\ssetfuz  \ar@{-->}[r]_-\sim^-\exists & \gpdGfuz
}
\]
extending the biequivalence of \Cref{Prop:bieq-Gsets-gpdG} along the two inclusions. 
\end{Thm}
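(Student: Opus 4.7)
The strategy is to extend the biequivalence $G\ltimes-\colon G\sset\xrightarrow{\sim}\gpdG$ of \Cref{Prop:bieq-Gsets-gpdG} by defining its action on the new 2-cells of $G\ssetfuz$. Since the two vertical inclusions in the statement are the identity on 0-cells and on 1-cells, the entire construction reduces to the 2-cell level.

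Given a twisting map $\tau\colon f_1\Rightarrow f_2\colon X\to Y$ in $G\ssetfuz$, that is a $G$-equivariant map $\tau\colon X\to G^c$ satisfying $\tau\ast f_1=f_2$, I would define $G\ltimes\tau\colon G\ltimes f_1\Rightarrow G\ltimes f_2$ to be the natural transformation whose component at $x\in X=\Obj(G\ltimes X)$ is the arrow $(\tau(x),f_1(x))\colon f_1(x)\to f_2(x)$ of $G\ltimes Y$ (legitimate precisely because $\tau(x)\cdot f_1(x)=f_2(x)$). Unfolding the naturality square on an arrow $(g,x)\colon x\to gx$ of $G\ltimes X$ reduces to the identity $\tau(gx)\cdot g=g\cdot\tau(x)$ in~$G$, i.e.\ $\tau(gx)={}^g\tau(x)$, which is exactly the $G$-equivariance of~$\tau$. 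Running this correspondence backwards, every natural transformation $G\ltimes f_1\Rightarrow G\ltimes f_2$ in $\gpd$ is determined by components in $G$ satisfying precisely these conditions, so twisting maps are in canonical bijection with such natural transformations---\emph{without} any compatibility with the projections to $G$ being required. This is exactly what places $G\ltimes\tau$ inside the larger class of 2-cells of~$\gpdGfuz$ rather than merely of~$\gpdG$.

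It remains to verify that this assignment on 2-cells is functorial. Identity 2-cells (the constant map with value $e\in G$) manifestly go to identity natural transformations, and vertical composition is preserved since $(\tau'\cdot\tau)(x)=\tau'(x)\tau(x)$ corresponds to composing the arrows $(\tau(x),f_1(x))$ and $(\tau'(x),f_2(x))$ in $G\ltimes Y$. Horizontal composition---which I expect to be the main bookkeeping obstacle---is checked by a short computation: for $\tau\colon f_1\Rightarrow f_2\colon X\to Y$ and $\sigma\colon f_3\Rightarrow f_4\colon Y\to Z$, tracing the component at $x$ of $(G\ltimes\sigma)\circ(G\ltimes\tau)$ through the composition law of $G\ltimes Z$ yields the arrow $(\tau(x)\cdot\sigma(f_1(x)),\,f_3f_1(x))$ (the apparent alternative $\sigma(f_2(x))\cdot\tau(x)$ is reconciled by the $G$-equivariance relation $\sigma(f_2(x))=\sigma(\tau(x)\cdot f_1(x))={}^{\tau(x)}\sigma(f_1(x))$), which matches Bouc's formula $(\sigma\circ\tau)(x)=\tau(x)\cdot\sigma(f_1(x))$ of \Cref{Def:G-set-fuz}.

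Biequivalence then follows immediately: essential surjectivity on 0-cells and local essential surjectivity on 1-cells are inherited from \Cref{Prop:bieq-Gsets-gpdG}, and the bijection between twisting maps and natural transformations established above promotes each hom-functor to an isomorphism of hom-categories, \emph{a fortiori} an equivalence. Preservation of the spannable (2,1)-structure is then automatic, since biequivalences transport finite coproducts and Mackey squares, and $G\ssetfuz$ is already known to be spannable by \Cref{Prop:G-set-fuz}.
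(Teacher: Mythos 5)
Your proposal is correct and follows essentially the same route as the paper: the same definition of $G\ltimes\tau$ with component $(\tau(x),f_1(x))$, the same horizontal-composition bookkeeping, the bijection between twisting maps and natural transformations (which is exactly the paper's \Cref{Lem:twists_nat}), and \Cref{Prop:G-set-fuz} for the spannable structure. The only nitpick is that the induced hom-functors are equivalences rather than isomorphisms of hom-categories (they need only be essentially surjective on 1-cells), but this does not affect the biequivalence conclusion.
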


\begin{proof}
The transport groupoid construction $X\mapsto (G\ltimes X,\pi_X)$ of \Cref{Def:transport-gpd}, which gave us the biequivalence $G\sset\isoto\gpdG$, can be extended to a well-defined 2-functor $G\ssetfuz\to \gpdGfuz$ by mapping a 2-cell $\tau\colon f_1\Rightarrow f_2$, that is a $G$-map $\tau\colon X\to G^c$ such that $\tau*f_1=f_2$, to the natural transformation
\[
G\ltimes \tau\colon G\ltimes f_1\Rightarrow G\ltimes f_2
\]
of functors $G\ltimes X\to G\ltimes Y$ whose component at an object $x\in X$ of $G\ltimes X$ is given by the arrow 
$(\tau(x),f_1(x)) \in G\ltimes Y (f_1(x), f_2(x))$.

To see that this $G\ltimes -$ is a well-defined 2-functor it only remains to check that it preserve identity 2-cells and vertical and horizontal composition, all of which is straightforward from the definitions. For horizontal composition this may look a little counter-intuitive, so let us spell it out. 
Consider a horizontal composite \eqref{eq:hor-comp} in $G\ssetfuz$. 
By applying $G\ltimes-$ to the right-hand side, we get the natural transformation $G\ltimes (f_3f_1)\Rightarrow G\ltimes (f_4f_2)$ with component $\tau(x)\cdot \sigma(f_1(x))\colon f_3f_1(x)\to f_4f_2(x)$ at $x\in X$.
After applying $G\ltimes-$ to the left-hand side, we may form the horizontal composite of $G\ltimes \tau$ and $G\ltimes \sigma$ which by definition is the diagonal of the following commutative square of natural transformations:
\[
\xymatrix@L=6pt{
(G\ltimes f_3)(G\ltimes f_1)
 \ar@{=>}[rr]^-{(G\ltimes \sigma)\, \Id}
 \ar@{=>}[d]_{\Id\, (G\ltimes \tau)} && 
(G\ltimes f_4)(G\ltimes f_1) \ar@{=>}[d]^{\Id\, (G\ltimes \tau)} \\
(G\ltimes f_3)(G\ltimes f_2) \ar@{=>}[rr]^-{(G\ltimes \sigma)\, \Id} && 
(G\ltimes f_4)(G\ltimes f_2)
}
\]
By following the right-then-down path, we obtain the (vertical!) composite
\[ (G\ltimes f_4)(G\ltimes \tau) \circ (G\ltimes \sigma)(G\ltimes f_1) \]
whose component at $x\in X$ is given by the following element of~$G$:
\[ \underbrace{(G\ltimes f_4)(G\ltimes \tau)(x)}_{\tau(x)} \cdot \underbrace{(G\ltimes \sigma)(G\ltimes f_1)(x)}_{\sigma(f_1(x))} \,. \]
We see that the two agree, hence $G\ltimes-$ preserves horizontal composites. 

To verify that this 2-functor is a biequivalence as claimed, it suffices to prove that it yields a bijection on each set of 2-cells, which is an immediate consequence of \Cref{Lem:twists_nat} below. 

By \Cref{Prop:G-set-fuz}, the biequivalence of \Cref{Thm:fusedMackey_vs_gpdGfuz} is in fact a biequivalence of spannable (2,1)-categories.
\end{proof}

\begin{Lem} \label{Lem:twists_nat}
Let $f_i\colon X\to Y$ ($i=1,2$) be two $G$-maps. For every natural transformation $\alpha =\{\alpha_x\}_{x\in X}\colon G\ltimes f_1 \Rightarrow G\ltimes f_2$ in $\groupoid$ we have $\alpha = G\ltimes \tau$ for a unique twisting $G$-map $\tau\colon X\to G^c$ such that $\tau*f_1=f_2$. Explicitly, $\tau$ is determined by setting $\alpha_x=(\tau(x),f_1(x))$ for all $x\in X$.
\end{Lem}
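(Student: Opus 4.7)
The plan is to \emph{read off} the map $\tau$ directly from the components of~$\alpha$, then extract $G$-equivariance from naturality.

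First, I would use the very definition of the transport groupoid to pin down uniqueness. For each $x\in X$, the component $\alpha_x$ is an arrow $f_1(x)\to f_2(x)$ in $G\ltimes Y$, hence of the form $\alpha_x=(g_x,f_1(x))$ for a unique $g_x\in G$ satisfying $g_x\cdot f_1(x)=f_2(x)$. Setting $\tau(x):=g_x$ defines the only possible function $\tau\colon X\to G$ compatible with the formula $\alpha_x=(\tau(x),f_1(x))$, and the identity $\tau\ast f_1=f_2$ is built in. This forces uniqueness of $\tau$ as a \emph{set} map and delivers the equation relating $f_1$ and $f_2$.

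Second, I would verify that $\tau$ is equivariant for the conjugation action on $G^c$. The only additional arrows in $G\ltimes X$ beyond identities are the basic arrows $(h,x)\colon x\to hx$ for $h\in G$ and $x\in X$, so it suffices to check naturality of $\alpha$ against these. The naturality square reads
\[
(G\ltimes f_2)(h,x)\circ \alpha_x \;=\; \alpha_{hx}\circ (G\ltimes f_1)(h,x).
\]
Substituting the definition of $G\ltimes f_i$ on arrows and composing in $G\ltimes Y$, the left-hand side becomes $(h\tau(x),f_1(x))$ while the right-hand side becomes $(\tau(hx)h,f_1(x))$. Comparing the $G$-components yields $\tau(hx)=h\tau(x)h^{-1}={}^h\tau(x)$, which is precisely $G$-equivariance of $\tau\colon X\to G^c$. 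In particular $\tau$ is a morphism in $G\sset$, as required to be a twisting map.

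Finally, the equality $\alpha=G\ltimes\tau$ is automatic: by the construction of the 2-functor $G\ltimes-$ in the proof of \Cref{Thm:fusedMackey_vs_gpdGfuz}, the component of $G\ltimes\tau$ at $x$ is $(\tau(x),f_1(x))$, which is $\alpha_x$ by definition of~$\tau$. There is no real obstacle here; the only point requiring a moment's care is noting that the twisted equivariance condition coming out of naturality is conjugation (not translation), which is the reason for considering $G^c$ rather than~$G$ with its regular action.
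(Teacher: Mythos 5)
Your proof is correct and follows essentially the same route as the paper: read off $\tau(x)$ from the component $\alpha_x=(\tau(x),f_1(x))$, note that naturality against the arrows $(h,x)$ of $G\ltimes X$ gives exactly $\tau(hx)h=h\tau(x)$, i.e.\ equivariance into $G^c$, and conclude $\alpha=G\ltimes\tau$ by the construction of $G\ltimes-$ on 2-cells. The explicit composition check you spell out is precisely the content of the commuting square in the paper's argument.
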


\begin{proof}
This follows by inspecting the definitions, because a natural transformation $\alpha\colon G\ltimes f_1 \Rightarrow G\ltimes f_2$ is precisely a collection of pairs $\{\alpha_x= (\tau(x), f_1(x))\}_{x\in X}$ for elements $\tau(x)\in G$ satisfying $\tau(x)f_1(x)= f_2(x)$ and such that
\[
\xymatrix@C=3em{
f_1(x) \ar[d]_-{\alpha_x} \ar[r]^-{(g,f_1(x))} &
 f_1(gx) \ar[d]^-{\alpha_{gx}} \\
f_2(x) \ar[r]^-{(g,f_2(x))} & f_2(gx)
}
\]
commutes in $G\ltimes X$ for all $g\in G$. The latter means that $\tau(gx)g=g\tau(x)$ for all~$g$, that is $x\mapsto \tau(x)$ is a $G$-map $X\to G^c$.
Moreover, the requirement that $f_2(x)= \tau(x)f_1(x)= (\tau*f_1)(x)$ for all $x\in X$ means that $f_2= \tau*f_1$. Hence $\tau\colon f_1\Rightarrow f_2$ is a 2-cell in~$G\ssetfuz$ and $\alpha=G\ltimes \tau$ by the above.
\end{proof}

\begin{Cor} \label{Cor:fusedMackey_vs_gpdGfuz}
By applying the span construction $\spancat_\kk(-)$ of \Cref{Cons:1-spansk} to \Cref{Thm:fusedMackey_vs_gpdGfuz}, we obtain a commutative square of $\kk$-linear categories
\[
\vcenter{
\xymatrix{
\spancat_\kk(G\sset) \ar[d] \ar[r]^-\simeq \ar[d] \ar[dr]|{\spancat_\kk(G\ltimes-)} & \spancat_\kk(\gpdG)  \ar[d]\\
\spancat_\kk(G\ssetfuz) \ar[r]^-{\simeq}  & \spancat_\kk(\gpdGfuz) 
}}
\]
with horizontal equivalences. 
\qed
\end{Cor}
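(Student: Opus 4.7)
The plan is to invoke the functoriality of $\spancat_\kk(-)$ established in Proposition \ref{Prop:functoriality-of-spancat}, applied to a commutative square of spannable $(2,1)$-categories and pseudo-functors: the top and bottom rows are the biequivalences of Proposition \ref{Prop:bieq-Gsets-gpdG} and Theorem \ref{Thm:fusedMackey_vs_gpdGfuz}, while the vertical arrows are the evident inclusion $2$-functors $G\sset \hookrightarrow G\ssetfuz$ and $\gpdG \hookrightarrow \gpdGfuz$. Once such a square of spannable pairs is in place, the desired square of $\kk$-linear categories comes for free, its diagonal being the image of the diagonal $G\ltimes-\colon G\sset\to\gpdGfuz$ under $\spancat_\kk(-)$.

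First I would check that each of the four $2$-functors is a morphism of spannable pairs, i.e.\ preserves finite coproducts and Mackey squares (up to equivalence). For the two horizontal biequivalences this is automatic: between spannable $(2,1)$-categories any biequivalence transports coproducts to coproducts and Mackey squares to Mackey squares, by the intrinsic characterization of Mackey squares given in Remark \ref{Rem:iso-comma-Mack-characterization}. For the vertical inclusions, both are identities on $0$- and $1$-cells, hence trivially preserve coproducts; and they preserve Mackey squares because the Mackey squares supplied by the smaller $2$-category remain Mackey squares in the larger one. For $G\sset \hookrightarrow G\ssetfuz$ this is precisely the content of the computation in the proof of Proposition \ref{Prop:G-set-fuz}. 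For $\gpdG \hookrightarrow \gpdGfuz$ one can either redo the analogous direct check using the iso-comma construction of Example \ref{Exa:iso-comma-cats} (the key point being that a natural transformation of faithful functors into $G$ which happens to lie over $G$ is the same thing as an abstract natural transformation, once restricted to a Mackey square), or simply transport Mackey squares along the two already-established biequivalences.

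Second I would observe that the square of $(2,1)$-categories strictly commutes. By construction in the proof of Theorem \ref{Thm:fusedMackey_vs_gpdGfuz}, the extended $2$-functor $G\ssetfuz \to \gpdGfuz$ acts on $0$- and $1$-cells exactly as the original $G\ltimes -\colon G\sset\to\gpdG$; since $G\sset$ has only identity $2$-cells, the composites $G\sset \hookrightarrow G\ssetfuz \to \gpdGfuz$ and $G\sset \to \gpdG \hookrightarrow \gpdGfuz$ literally coincide as $2$-functors. Then Proposition \ref{Prop:functoriality-of-spancat} gives the commutative square of $\kk$-linear categories, with the horizontal arrows being equivalences because $\spancat_\kk(-)$ sends biequivalences to equivalences. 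Labelling the diagonal is then just a matter of unwinding definitions.

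The main, and really only, non-routine step is confirming that the inclusion $\gpdG\hookrightarrow\gpdGfuz$ preserves Mackey squares; everything else amounts to direct bookkeeping against Theorem \ref{Thm:fusedMackey_vs_gpdGfuz} and Proposition \ref{Prop:functoriality-of-spancat}.
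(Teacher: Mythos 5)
Your proposal is correct and is essentially the paper's own (implicit) argument: the corollary is treated there as an immediate application of Proposition~\ref{Prop:functoriality-of-spancat} to the strictly commuting square of spannable $(2,1)$-categories furnished by Theorem~\ref{Thm:fusedMackey_vs_gpdGfuz} (together with Proposition~\ref{Prop:bieq-Gsets-gpdG}), the horizontal biequivalences becoming equivalences after $\spancat_\kk(-)$. Your verifications that the vertical inclusions are morphisms of spannable pairs are the right supporting checks; the only small caveat is that preservation of coproducts by these inclusions is, like preservation of Mackey squares, a statement about the enlarged Hom categories (see the proof of Proposition~\ref{Prop:G-set-fuz} for $G\ssetfuz$, or transport along the two biequivalences for $\gpdGfuz$) rather than being literally automatic from the inclusions acting as the identity on $0$- and $1$-cells.
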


\begin{Cor} \label{Cor:fusedMackeyfun_vs_gpdGfuz-Mackeyfun}
By taking functor categories in \Cref{Cor:fusedMackey_vs_gpdGfuz}, we obtain the following commutative square of $\kk$-linear categories of Mackey functors (\Cref{Def:Mackey-fun})
\[
\vcenter{
\xymatrix{
\Mackey_\kk(G)  & \Mackey_\kk(\gpdG)  \ar[l]_-{\simeq} \\
\Mackey_\kk(G\ssetfuz) \ar[u] & \Mackey_\kk(\gpdGfuz) \ar[u] \ar[l]_-{\simeq} \ar[ul]_{=:\,\Phi}
}}
\]
with horizontal equivalences. Moreover, the full image in $\Mackey_\kk(G)$ of the diagonal functor $\Phi$ coincides with the abelian subcategory $\Mackey^f_\kk(G)$ of \emph{fused Mackey functors} of~\cite{Bouc15}, also called \emph{conjugation-invariant Mackey functors} by Hambleton-Taylor-Williams~\cite{HambletonTaylorWilliams10}. Concretely, they are precisely those $M\in \Mackey_\kk(G)$ such that for every subgroup $H\leq G$ the centralizer $C_G(H)$ acts trivially (via the conjugation maps) on~$M(G/H)$.
\end{Cor}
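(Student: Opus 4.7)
The commutative square with horizontal equivalences is obtained for free by applying $\Fun_\kk(-, \kk\MMod)$ to the square of \Cref{Cor:fusedMackey_vs_gpdGfuz}: this operation reverses horizontal arrows and preserves equivalences, defining $\Phi$ as the diagonal. All real work lies in identifying the image of $\Phi$, which I would do in two steps.

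As a preparatory step, I would identify $\Mackey_\kk(G\ssetfuz)$ with Bouc's category $\Mackey^f_\kk(G) := \Fun_\kk(\spancat_\kk(G\ssetfused), \kk\MMod)$. By \Cref{Prop:G-set-fuz}, the 1-truncation of $G\ssetfuz$ is $G\ssetfused$ and the Mackey squares of $G\ssetfuz$ are ordinary pullbacks of $G$-sets, which descend to pullback squares in the quotient category; so \Cref{Rem:alt_descr_Span} applies and yields a canonical equivalence $\spancat_\kk(G\ssetfuz) \simeq \spancat_\kk(G\ssetfused)$. Under the resulting equivalence on functor categories, the diagonal $\Phi$ becomes precomposition along the canonical projection $q\colon \spancat_\kk(G\sset) \twoheadrightarrow \spancat_\kk(G\ssetfused)$, so its full image is the subcategory of $M \in \Mackey_\kk(G)$ factoring through~$q$.

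The main step, and the only real obstacle, is describing this factorization condition explicitly. By additivity, $M$ factors through $q$ iff $M$ identifies $f_1$ and $f_2$ as spans in both directions, for every pair of parallel $G$-maps $f_i\colon G/H \to G/K$, $xH \mapsto xg_iK$, related by a twisting. A short equivariance calculation with a twisting $\tau\colon G/H \to G^c$ forces $\tau(H) \in C_G(H)$, and shows that $f_1, f_2$ are twist-related iff $g_2 \in C_G(H)\, g_1 K$; absorbing the $K$-factor (which does not change $f_i$) we may take $g_2 = cg_1$ with $c \in C_G(H)$. The clean factorization $f_{cg_1} = f_{g_1}\circ f_c$, where $f_c\colon G/H \to G/H$, $xH \mapsto xcH$, is the $G$-automorphism realizing $c\in C_G(H) \subseteq N_G(H)$ in the Weyl group, reduces all twist-relations to the single family $M(f_c) = \id$ for $H \leq G$ and $c \in C_G(H)$ (the relations on restrictions $f^*$ being automatic given those on pushforwards, since $f_c^{-1} = f_{c^{-1}}$ is already in the family). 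Since $f_c$ realizes precisely the action of $c$ on $M(G/H)$ via $C_G(H) \to N_G(H)/H \to \Aut(M(G/H))$, this is exactly the stated conjugation-triviality condition.
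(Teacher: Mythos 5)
Your proposal is correct and follows essentially the same route as the paper: apply $\Fun_\kk(-,\kk\MMod)$ to \Cref{Cor:fusedMackey_vs_gpdGfuz}, identify $\spancat_\kk(G\ssetfuz)$ with Bouc's span category of fused $G$-sets via \Cref{Prop:G-set-fuz} and \Cref{Rem:alt_descr_Span}, and characterize the Mackey functors factoring through the quotient by triviality of the $C_G(H)$-action on $M(G/H)$. The only divergence is one of detail: where the paper obtains necessity from automorphisms of subgroup inclusions transported by \Cref{Lem:twists_nat} and dispatches sufficiency with ``all 2-cells of $G\ssetfuz$ are generated by such $\tau_a$'' (citing Bouc), you prove both directions on the $G$-set side by classifying twist-related parallel maps $G/H\to G/K$ (the condition $g_2\in C_G(H)\,g_1K$) and reducing via $f_{cg_1}=f_{g_1}\circ f_c$, which amounts to an explicit verification of the step the paper leaves to the reader.
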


In particular, this shows that fused Mackey functors too are a special case of our generalized Mackey functors (take $\GG=\JJ=\gpdGfuz$).

\begin{proof}[Proof of \Cref{Cor:fusedMackeyfun_vs_gpdGfuz-Mackeyfun}]
The identification of the image of $\Phi$ with Bouc's $\Mackey^f_\kk(G)$ is now immediate from the definitions. Indeed, the latter is defined to be the (image in $\Mackey_\kk(G)$ of the) $\kk$-linear category of spans on $G\ssetfused=\pih (G\ssetfuz)$ with composition induced by the pullbacks of $G\sset$;  by \Cref{Prop:G-set-fuz} and \Cref{Rem:alt_descr_Span} this is precisely the category $\spancat_\kk(G\ssetfuz)$.

Finally, the explicit characterization of the Mackey functors in $\Mackey^f_\kk(G)$ follows, via \Cref{Lem:twists_nat}, from the fact that an automorphism $\alpha\colon i\Rightarrow i$ in $\gpd$ of a subgroup inclusion homomorphism $i\colon H\hookrightarrow G$ is given by (\ie has for its unique component) an element of $G$ which centralizes~$H$. More precisely, such an element $a\in C_G(H)$ of the centralizer defines a $G$-map $\tau_a\colon G/H\to G^c$ via $\tau_a(gH)=gag^{-1}$, hence a 2-cell $\tau_a\colon \Id_{G/H} \Rightarrow \tau_a*\Id_{G/H}$, hence an equivalence of spans in $G\ssetfuz$:
\[
\vcenter{
\xymatrix@R=.5em@C=4em{ 
& G/H \ar@{=}[dl] \ar@{=}[dr] \ar@{=}[dd] & \\
G/H   && G/H \ar@{}[l]|{\SWcell\tau_a\;\;\;\;\;} \\
& G/H \ar@{=}[ul] \ar[ur]_{\tau_a*\Id} &
}}
\]
The map $\tau_a*\Id_{G/H}$ sends $gH$ to $gaH$, \ie it is precisely the $G$-isomorphism $G/H\overset{\sim}{\to} G/H$ of conjugation by~$a$ (or its inverse, depending on conventions). 
This shows that any Mackey functor $M$ factoring through the quotient $\spancat_\kk(G)\to \spancat_\kk(G\ssetfuz)$ has the property that $C_G(H)$ acts trivially on $M(G/H)$ for all $H\leq G$.

The latter condition is also sufficient for a Mackey functor to factor via the quotient because, as one sees easily, \emph{all 2-cells} of $G\ssetfuz$ are generated by such $\tau_a$ by taking sums (of orbits~$G/H$) and composites (of spans).
(See \cite[Thm.\,2.11]{Bouc15} for more details on this.)
\end{proof}

\begin{Rem} \label{Rem:biset(G)}
The motivation for studying fused Mackey functors is that they are precisely the Mackey functors for~$G$ which can be formulated as biset functors (\cf \Cref{sec:bisets}). More precisely, the authors of \cite{HambletonTaylorWilliams10} consider the (non-full) $\kk$-linear subcategory $\bisetcat_\kk(G) \subset \bisetcatbif(\group)$ (see \Cref{Not:free-bifree}) whose objects are the finite subgroups of~$G$ and whose maps are the \emph{conjugation bisets} between them; the latter are all bisets which can be obtained by combining restriction and induction bisets (see \Cref{Rem:elementary_bisets}) as well as those isomorphism bisets given by conjugation by an element of~$G$. 
Bouc \cite{Bouc15} constructs a $\kk$-linear functor $\spancat_\kk(G\sset)\to \bisetcat_\kk^\oplus(G)$ to the additive hull $\bisetcat_\kk^\oplus(G)$ of $\bisetcat_\kk(G)$ which sends induction, restriction and conjugation spans to the homonymous bisets, and proves that it descends to an equivalence $\spancat_\kk(G\ssetfuz) \overset{\sim}{\to} \bisetcat_\kk^\oplus(G)$ on spans of fused $G$-sets  (\cite[Thm.\,2.11]{Bouc15}). 
Hence, fused Mackey functors are indeed expressible as a kind of biset functors. 

We suspect the latter equivalence is the 1-truncation of a biequivalence between $G\ssetfuz \simeq \gpdGfuz$ and a suitable comma bicategory of bisets over~$G$, which can be obtained as a variant of the realization pseudo-functor $\cat R$ of \Cref{Thm:Huglo_bifunctor} below. We do not pursue this idea here, as it would take us a little afield, but the next section should provide most of the ingredients to do so.
\end{Rem}

\section{Spans vs bisets}
\label{sec:bisets}%

In this section we investigate the relationship between Mackey functors and biset functors. 
Recall from Examples~\ref{Def:biset_functor_gpd} and~\ref{Exa:truncated-bisets} the bicategory $\biset(\gpd)$ of finite groupoids, bisets and biset morphisms, as well as its full sub-bicategory $\biset(\group)$ of finite groups. Consider their 1-truncations $\pih (\biset(\gpd))$ and $\pih (\biset(\group))$.

\begin{Lem} \label{Lem:biset-add}
The category $\pih \biset(\gpd)$ is semi-additive (\S\,\ref{parag:k-lin}), with direct sums induced by the disjoint sums of groupoids. 
The sum of two bisets $H\to G$ and the zero biset $H\to G$ are given by the coproduct biset and the constantly empty biset 
\[
[{}_GU_H] + [{}_GV_H] = [U \sqcup V]  
\quad \textrm{ and } \quad
0_{H,G}=[ \varnothing ]
\]
respectively. 
Moreover, $\pih \biset(\gpd)$ is the semi-additive hull of its full subcategory $\pih \biset(\group)$, meaning that every object of $\pih \biset(\gpd)$ is a direct sum of finite groups and every arrow is a matrix of arrows between groups.
\end{Lem}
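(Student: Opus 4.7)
The plan is to establish three ingredients in turn: (i) the empty groupoid $\varnothing$ is a zero object in $\pih\biset(\gpd)$; (ii) for any two finite groupoids $G_1, G_2$, the disjoint union $G_1\sqcup G_2$, together with the bisets represented by the inclusion functors $\iota_k\colon G_k\hookrightarrow G_1\sqcup G_2$, is a biproduct of $G_1$ and $G_2$ in $\pih\biset(\gpd)$; and (iii) the resulting canonical abelian-monoid structure on Hom sets matches the claimed explicit formulas. The semi-additive hull statement will then follow.

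For (i), I would note that a biset $\varnothing\to G$ is by definition a functor $G^{\op}\times\varnothing\to\set$, of which there exists exactly one, namely the empty functor; symmetrically for $G\to\varnothing$. Hence $\varnothing$ is both initial and terminal in $\pih\biset(\gpd)$, making it a zero object, with the constantly empty biset representing the zero map.

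For (ii), the key observation is the equality of groupoids
\[
(G_1 \sqcup G_2)^{\op} \times H \;=\; (G_1^{\op} \times H) \,\sqcup\, (G_2^{\op} \times H)
\]
for every $H$. Since functors out of a coproduct of categories decompose pairwise, this yields a natural isomorphism of Hom categories of $\biset(\gpd)$,
\[
\biset(\gpd)(G_1 \sqcup G_2,\, H)\;\cong\;\biset(\gpd)(G_1, H)\,\times\,\biset(\gpd)(G_2, H),
\]
under which the projections correspond to precomposition with the representable bisets $(\iota_k)_*$. Upon passing to isomorphism classes this becomes a bijection of sets, exhibiting $G_1\sqcup G_2$ as a coproduct in $\pih\biset(\gpd)$ with inclusions $(\iota_k)_*$. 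A symmetric argument applied to the second variable shows that $G_1\sqcup G_2$ is simultaneously a product with projections $(\iota_k)^*$. Combined with (i), this gives biproducts in $\pih\biset(\gpd)$, so the category is semi-additive.

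For (iii), the semi-additive structure determined by biproducts is unique, with sum of morphisms given by the standard formula $f+g = \nabla\circ(f\oplus g)\circ \Delta$ (see \S\ref{parag:k-lin}). It therefore suffices to check that the proposed operations---disjoint union of bisets as sum and empty biset as zero---satisfy the axioms of a semi-additive enrichment: that is, they are associative, commutative, admit the empty biset as neutral element, and are bilinear in the composition of bisets. Bilinearity reduces to the fact that the coend defining composition commutes with coproducts in $\set$ in each variable. Since there is at most one such enrichment, these operations must coincide with the ones induced by the biproduct structure. For the final claim, note that every finite groupoid is equivalent in $\gpd$ to a finite disjoint union of its vertex groups, and any equivalence $G\xrightarrow{\sim}G'$ in $\gpd$ induces an invertible biset, hence an isomorphism in $\pih\biset(\gpd)$; together with (ii) this gives that every object is a direct sum of groups and every morphism a matrix of bisets between groups.

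The main obstacle is a bookkeeping one: the product/coproduct arguments in (ii) take place at the level of Hom \emph{categories}, while the statement concerns the 1-truncated category $\pih\biset(\gpd)$. One must verify that the category-level isomorphism of functor categories descends to a bijection on isomorphism classes (automatic, as equivalences of categories preserve isomorphism classes) and that the canonical inclusion/projection bisets identified at the category level indeed play the role of biproduct data in the truncation.
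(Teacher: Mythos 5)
Your proposal is correct and supplies exactly the kind of routine verification that the paper dismisses with ``This is all straightforward'': zero object via the empty groupoid, disjoint unions as (co)products via the decomposition of functors out of a coproduct of categories, the commutative-monoid enrichment by disjoint union of bisets (bilinearity from coends commuting with coproducts), and reduction to groups via equivalences of groupoids realized as invertible bisets. The only phrasing to tighten is the end of your step (ii): a zero object together with coinciding products and coproducts does not by itself yield biproducts, but this is repaired either by your step (iii) (a category enriched in commutative monoids with finite coproducts automatically has biproducts, and the enrichment is then the canonical one) or by the one-line co-Yoneda computation $(\iota_j)^*\otimes(\iota_k)_*\cong (G_1\sqcup G_2)(\iota_k-,\iota_j-)$, which is the identity biset for $j=k$ and the empty (zero) biset for $j\neq k$.
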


\begin{proof} This is all straightforward. \end{proof}

\begin{Not} \label{Not:bisetcats}
As usual, fix a ground commutative ring~$\kk$.
We will write 
\[
\bisetcat(\gpd) := \pih \biset(\gpd)
\quad \textrm{ and } \quad
\bisetcat(\group) := \pih \biset(\group)
\]
for the semi-additive categories of bisets of \Cref{Lem:biset-add}, and we will denote by
\[
\bisetcat_\kk(\gpd) := \kk (\pih \biset(\gpd)) 
\quad \textrm{ and } \quad
\bisetcat_\kk(\group) := \kk (\pih \biset(\group))
\]
their $\kk$-linearization as in \Cref{Cons:kk-linearization}. 
\end{Not}

\begin{Rem}
In the literature, the category $\bisetcat_\kk(\group)$ is the one usually referred to as `the biset category', rather than its additive hull $\bisetcat_\kk (\gpd)$.
In \cite{IbarraRaggiCardenasRomero18}, the authors provide an alternative explicit description of the additive completion, which avoids the use of groupoids and is related to the 2-category~$\mathbb S$ of \cite{Nakaoka16}.
\end{Rem}

\begin{Def}[{Biset functor~\cite{Bouc10}}]
\label{Def:biset-functor}
A \emph{\textup($\kk$-linear\textup)  biset functor} is a $\kk$-linear (hence also additive, \ie direct-sum preserving) functor
\[
F\colon \bisetcat_\kk(\gpd) \longrightarrow \kk\MMod
\]
to the abelian category of $\kk$-modules. A morphism of biset functors is simply a natural transformation.
\end{Def}

\begin{Rem} \label{Rem:various-cats-of-biset-functors}
By the general properties of $\kk$-linearization, we have a canonical additive functor $\bisetcat(\gpd) \to \bisetcat_\kk(\gpd)$ inducing a $\kk$-linear isomorphism
\begin{equation*} 
\Fun_\kk(\bisetcat_\kk(\gpd), \kk\MMod ) \stackrel{\sim}{\longrightarrow} \Fun_+(\bisetcat(\gpd), \kk\MMod ) 
\end{equation*}
of functor categories. Moreover, since additive functors extend essentially uniquely to the additive hull, we also have an equivalence
\begin{equation*} 
\Fun_\kk(\bisetcat_\kk(\gpd), \kk\MMod ) \stackrel{\sim}{\longrightarrow} \Fun_\kk(\bisetcat_\kk(\group), \kk\MMod ) = :\cat F
\end{equation*}
induced by the inclusion $\bisetcat_\kk(\group)\hookrightarrow \bisetcat_\kk(\gpd)$. This last functor category, under the notation~$\cat F$, is the category of biset functors as defined in~\cite{Bouc10}.
\end{Rem}

\begin{Rem}
A more combinatorial description of biset functors, similar to Green's original axioms for Green and Mackey functors~\cite{Green71}, is given in~\cite[\S8]{Webb00} (under the name \emph{globally defined Mackey functors}). See also \Cref{Rem:Webb's-variations}. 
\end{Rem}

The following result provides a direct connection between bisets and spans:

\begin{Thm}[{Huglo \cite{Huglo19pp}}]
\label{Thm:Huglo_bifunctor}
There is a well-defined pseudo-functor
\[
\cat R\colon \Span(\groupoid) \longrightarrow \biset(\groupoid)
\]
from the bicategory of spans in $\gpd$ (\Cref{Rem:span-bicats}) to that of bisets (\Cref{Def:biset_functor_gpd}),  which is the identity on objects, and which sends a span $H \stackrel{b}{\gets} S \stackrel{a}{\to} G$ of groupoids to the $G,H$-biset
\[
\cat R(H \stackrel{b}{\gets} S \stackrel{a}{\to} G)
:= G(a-,-) \otimes_S H(-,b-) \colon H^\op\times G \too \Set \,.
\]
Moreover, the functor it induces on truncated 1-categories
\[
\Real:= \pih \cat R \colon \spancat(\groupoid) \longrightarrow \bisetcat(\groupoid)
\]
is additive and full. 
\end{Thm}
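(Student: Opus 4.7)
My plan is to construct $\cat R$ in stages, verify the coherence axioms, and then deduce the claims about the truncation $\Real$. First I would define $\cat R$ on 0-cells (identity), on 1-cells (by the given coend formula), and on 2-cells. Since 2-cells of $\Span(\gpd)$ are equivalences of spans given by $f\colon S\isoto S'$ plus invertible 2-cells $\alpha,\beta$ (see \Cref{Cons:1-spans}), functoriality of the Hom-bifunctors in $f$ combined with the universal property of coends should produce the required isomorphism $\cat R(\sigma)\isoto \cat R(\sigma')$, which simultaneously shows well-definedness on equivalence classes of spans.

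Next I would supply the pseudo-functor structure isomorphisms. The unitor $\cat R(\Id_G)=G(-,-)\otimes_G G(-,-)\cong G(-,-)={}_GG_G$ follows immediately from the coYoneda lemma. The compositor is the crux. Given composable spans $\sigma_1=[H\xleftarrow{b}S\xrightarrow{a}G]$ and $\sigma_2=[G\xleftarrow{d}T\xrightarrow{c}K]$, the composite $\sigma_2\circ\sigma_1$ has middle object the iso-comma groupoid $(a/d)$ of \Cref{Exa:iso-comma-cats}, with projections $p\colon (a/d)\to S$ and $q\colon (a/d)\to T$. The key observation to deploy is that $(a/d)$ realizes the (discrete) Grothendieck construction of the bifunctor $G(a-,d-)\colon S^{\op}\times T\to \set$. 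Combining this with Fubini for coends and two applications of coYoneda should give a canonical natural isomorphism
\begin{align*}
\cat R(\sigma_2\circ\sigma_1)(h,k)
&=\textstyle \int^{x\in(a/d)} K(cq(x),k)\times H(h,bp(x)) \\
&\cong\textstyle \int^{(s,t)\in S^{\op}\times T} G(a(s),d(t))\times K(c(t),k)\times H(h,b(s)) \\
&\cong\textstyle \int^{g\in G}\cat R(\sigma_1)(h,g)\times \cat R(\sigma_2)(g,k)
= \big(\cat R(\sigma_2)\otimes_G\cat R(\sigma_1)\big)(h,k).
\end{align*}
The main obstacle I anticipate lies in checking the pseudo-functor coherence axioms (unit and associativity pentagons) for these structure isomorphisms; this should reduce to the strict associativity and Fubini properties of coends together with the 2-universal property of iso-commas as used in \Cref{Cons:1-spans}, but the bookkeeping is the delicate part.

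To deduce the claims about $\Real=\pih \cat R$, additivity (preservation of the zero element and of sums in each Hom-monoid) should follow at once from two observations: the empty span realizes to the empty biset, which is the zero of \Cref{Lem:biset-add}; and a span with middle object $S_1\sqcup S_2$ realizes to a disjoint union of bisets, since Hom-sets and coends both split over disjoint unions of groupoids, and this disjoint union is precisely the sum in $\bisetcat(\gpd)$. For fullness, I would argue constructively: given a biset $U\colon H^{\op}\times G\to\set$, form its category of elements $S_U$, whose objects are triples $(h,g,u)$ with $u\in U(h,g)$ and whose morphisms $(h,g,u)\to(h',g',u')$ are pairs $(\alpha\colon h\to h',\beta\colon g\to g')$ with $U(\alpha,\beta)(u)=u'$. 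Since $H^{\op}\times G$ is a groupoid, so is $S_U$, and together with its evident projections $b\colon S_U\to H$ and $a\colon S_U\to G$ it provides a span whose realization recovers $U$ up to natural isomorphism by coYoneda. This exhibits a preimage in $\spancat(\gpd)(H,G)$ for every biset class, whence fullness.
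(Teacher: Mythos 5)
Your construction is essentially correct, but it takes a genuinely different route from the paper. The paper does not build the compositor by hand: it first observes that for every functor $u\colon H\to G$ the bisets $\cat R_!(u)=G(u-,-)$ and $\cat R^*(u)=G(-,u-)$ form an internal adjunction in $\biset(\groupoid)$, assembles these into pseudo-functors $\cat R_!\colon\gpd\to\biset(\gpd)$ and $\cat R^*\colon\gpd^\op\to\biset(\gpd)$, checks by an explicit computation that these adjunctions satisfy base-change along iso-comma squares, and then invokes the universal property of the span bicategory (\cite[Thm.\,5.2.1]{BalmerDellAmbrogio18pp}) to paste them into $\cat R$ with $\cat R[\,{\gets}b\;a{\to}\,]=\cat R_!(a)\circ\cat R^*(b)$. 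That route buys all the pseudo-functor coherence for free, which is exactly the part you flag as ``delicate bookkeeping'' and do not carry out: your hands-on compositor, obtained by identifying the iso-comma $(a/d)$ with the category of elements of $G(a-,d-)$ and then applying the weighted-colimit/Fubini/coYoneda calculus, is the same computational content as the paper's base-change verification, but you still owe the unit and associativity coherence diagrams (and their compatibility with the associators of $\Span(\gpd)$, which involve the comparison equivalences between iterated iso-commas). This is doable --- it is what Huglo's thesis does in detail --- but it is the one place where your plan is a plan rather than a proof. Your fullness argument via the category of elements $S_U$ is precisely the first of the two options the paper mentions (the paper also notes the quicker alternative of combining additivity with the decomposition of bisets into elementary ones, which are visibly realized by elementary spans), and your additivity argument matches the paper's. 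One small inaccuracy: the 2-cells of the bicategory $\Span(\gpd)$ are not only the equivalences of spans used in \Cref{Cons:1-spans} (the middle 1-cell of a morphism of spans need not be an equivalence), so as stated your $\cat R$ is only defined on a sub-bicategory; this is harmless, since your coend recipe is functorial in an arbitrary middle functor $f\colon S\to S'$, but it should be said.
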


For the present purposes, we really only need the following corollary which is immediately obtained by $\kk$-linearization:

\begin{Cor} \label{Cor:kklinear-Re}
There exists a full $\kk$-linear functor
\[
\Real_\kk := \kk (\pih \cat R) \colon \spancat_\kk (\gpd) \longrightarrow \bisetcat_\kk(\gpd)
\]
which is the identity on objects and sends the class of a span  $[H \stackrel{b}{\gets} S \stackrel{a}{\to} G]$ to the class of the tensor product $G,H$-biset $[G(a-,-)\otimes_S H(-,b-)]$. 
\qed
\end{Cor}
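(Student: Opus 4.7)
The proof is essentially a direct application of the $\kk$-linearization construction to the additive full functor provided by \Cref{Thm:Huglo_bifunctor}, so my plan is primarily to unwind the definitions and check that each claimed property survives this formal extension.

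First, I would recall that by \Cref{Thm:Huglo_bifunctor} we already have an additive functor $\Real = \pih \cat R \colon \spancat(\gpd) \to \bisetcat(\gpd)$ between semi-additive categories, which is the identity on objects and sends $[H \stackrel{b}{\gets} S \stackrel{a}{\to} G]$ to $[G(a-,-)\otimes_S H(-,b-)]$. By the universal property of the $\kk$-linearization construction recalled in \Cref{Cons:kk-linearization}, the composite of $\Real$ with the canonical additive functor $\bisetcat(\gpd) \to \bisetcat_\kk(\gpd)$ extends uniquely to a $\kk$-linear functor out of the $\kk$-linearization of the source, yielding
\[
\Real_\kk \colon \spancat_\kk(\gpd) = \kk\bigl(\spancat(\gpd)\bigr) \longrightarrow \bisetcat_\kk(\gpd).
\]
Since the $\kk$-linearization leaves the objects untouched and the above extension is defined on a generating map by the same formula as~$\Real$, the first two claims (identity on objects, and the tensor-product formula on spans) are immediate.

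The only substantive point is fullness. For fixed groupoids $H,G$, fullness of $\Real$ means that the monoid map $\Real_{H,G}\colon \spancat(\gpd)(H,G) \to \bisetcat(\gpd)(H,G)$ is surjective. After $\kk$-linearization, the Hom $\kk$-module $\bisetcat_\kk(\gpd)(H,G)$ is generated over~$\kk$ by elements of the form $1 \otimes (\beta - 0)$ with $\beta$ a biset class; such a $\beta$ lifts to a span via~$\Real$, so its image in $\bisetcat_\kk(\gpd)$ lies in the image of~$\Real_\kk$. As the image of a $\kk$-linear map is a $\kk$-submodule, this shows $\Real_\kk$ is full.

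I do not anticipate any real obstacle: this corollary is a formal consequence of \Cref{Thm:Huglo_bifunctor} together with the very definition of $\kk$-linearization, and the only thing to watch out for is making the (trivial) surjectivity-passes-to-linearization argument explicit.
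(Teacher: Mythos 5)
Your proposal is correct and matches the paper's own treatment: the corollary is stated there as "immediately obtained by $\kk$-linearization" of the additive full functor $\pih\cat R$ from \Cref{Thm:Huglo_bifunctor}, which is exactly your argument, with your explicit check that surjectivity on Hom monoids survives group completion and extension of scalars being the only (routine) detail the paper leaves implicit.
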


\begin{proof}[Sketch of proof for \Cref{Thm:Huglo_bifunctor}]
More details can be found in Huglo's PhD thesis \cite{Huglo19pp}, together with other properties of the pseudo-functor~$\cat R$; see also  \cite[\S5]{DellAmbrogioHuglo20pp} for a compact account. The main observation is that for every functor $u\colon H\to G$ there is an internal adjunction (\S\,\ref{parag:internal-adj-equiv}) in the bicategory $\biset(\groupoid)$ as follows:
\[
\xymatrix{
H
\ar@/_2ex/[d]_{\cat R_!(u)\,:=\,G(u-,-)}
\ar@/^2ex/@{<-}[d]^{G(-,u-)\,=:\,\cat R^*(u)}
 \ar@{}[d]|{\dashv} \\
G
}
\]
Routine properties of adjunctions allow us to extend the collection of the left adjoints to a pseudo-functor $\cat R_!\colon \gpd\to \biset$, and similarly the right adjoints to a pseudo-functor $\cat R^*\colon \gpd^{\op}\to \biset$, both pseudo-functors being the identity on objects. By an explicit computation, one verifies that these adjunctions satisfy the base-change formula with respect to iso-comma squares. It follows then by the universal property of the span bicategory (see \cite[Thm.\,5.2.1]{BalmerDellAmbrogio18pp}) that the pseudo-functors $\cat R_!$ and $\cat R^*$ can be `pasted together' in order to obtain a pseudo-functor~$\cat R$ on $\Span(\gpd)$ which sends a span $H \stackrel{b}{\gets} S \stackrel{a}{\to} G$ to the horizontal composite $\cat R_!(a) \circ \cat R^*(b)$ in~$\biset(\gpd)$, as claimed.
\[
\xymatrix@R=14pt{
\groupoid
 \ar[d]_{\incl}
 \ar@/^2ex/[drr]^-{\cat R_!} && \\
\Span(\groupoid) \ar@{-->}[rr]^-{\exists\,!\;\cat R} && \biset(\groupoid)
\\
{\groupoid^{\op}}
 \ar[u]^{\incl}
 \ar@/_2ex/[urr]_-{\cat R^*} && \\
}
\]
The additivity of $\pih \cat R$ boils down to the fact that $\cat R$ is the identity on objects and preserves disjoint unions.

In order to see that the functor $\pih{\cat R}$ is full, it can be shown that every biset ${}_GU_H$ is isomorphic to the image under $\cat R$ of a canonical span $H \gets S_U \to G$, where the groupoid $S_U$ is a suitable Grothendieck construction (category of elements).
Alternatively, and more simply, it suffices to combine the additivity of $\pih \cat R$ with Remarks~\ref{Rem:elementary_bisets} and~\ref{Rem:comparison-of-elementary-generators} below.
\end{proof}

We are going to upgrade \Cref{Cor:kklinear-Re} to the following more precise result:

\begin{Thm} \label{Thm:biset-as-span-quot}
The realization pseudo-functor of \Cref{Thm:Huglo_bifunctor} induces 
an isomorphism of $\kk$-linear categories 
\[
\xymatrix@C=5pt@R=10pt{
\spancat_\kk (\gpd)
 \ar@{->>}[dr] \ar[rr]^-{\Real_\kk} &&
 \bisetcat_\kk(\gpd) \\
& \spancat_\kk (\gpd) /_\sim \ar[ru]_{\simeq} &
}
\]
which identifies the biset category with the quotient of the full span category of \Cref{sec:presentation} obtained by factoring out the additive $\kk$-linear ideal generated by the relations 
\begin{align*}
[Q \overset{\;p}{\leftarrow} G \overset{p\;}{\rightarrow} Q] \sim \id_{Q}
\end{align*}
for all surjective group homomorphisms $p\colon G\to Q$. Or equivalently  (in terms of the elementary spans of \Cref{Not:generators}), generated by the `deflativity relations'
\begin{align} \label{eq:deflative_equation}
\Defl^G_{G/N} \circ \Infl^G_{G/N} \sim \id_{G/N}\end{align}
for every normal subgroup $N\unlhd G$.
\end{Thm}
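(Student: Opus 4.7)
The strategy has two main steps: first show that the realization $\Real_\kk$ descends to the proposed quotient, then use the presentation of \Cref{Thm:pres-Span} together with Bouc's parallel presentation of the biset category to identify this quotient with $\bisetcat_\kk(\gpd)$.

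\textit{Step 1: $\Real_\kk$ kills deflativity.} For a surjective group homomorphism $p\colon G \twoheadrightarrow Q$, I would compute directly that the biset
\[
\cat R([Q \overset{p}{\gets} G \overset{p}{\to} Q]) = Q(p-,-) \otimes_G Q(-, p-)
\]
is canonically isomorphic to the identity biset ${}_QQ_Q$. Concretely, the coend description in \Cref{Def:biset_functor_gpd} gives the set of pairs $(\beta, \alpha) \in Q \times Q$ modulo $(\beta p(g), \alpha) \sim (\beta, p(g)\alpha)$, and since $p$ is surjective this relation lets us absorb all of $Q$ into either coordinate. The surviving map $(\beta, \alpha) \mapsto \beta\alpha$ is a bijection $Q$-equivariant on both sides, so the biset is indeed $\Id_Q$. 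Hence $\Real_\kk$ sends each deflativity span to $\id_{Q}$ and factors as a full $\kk$-linear functor $\overline{\Real_\kk}\colon \spancat_\kk(\gpd)/_\sim \to \bisetcat_\kk(\gpd)$; the fullness is inherited from $\Real_\kk$ (\Cref{Thm:Huglo_bifunctor}).

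\textit{Step 2: Construct an inverse via presentations.} By \Cref{Lem:add-closure-spans} and \Cref{Lem:biset-add}, both $\spancat_\kk(\gpd)$ and $\bisetcat_\kk(\gpd)$ are the additive hulls of their full subcategories on groups, so it suffices to produce mutual inverses at the level of groups. \Cref{Thm:pres-Span} gives a presentation of $\spancat_\kk(\group)$ and Bouc~\cite[\S 1.1.3]{Bouc10} gives one of $\bisetcat_\kk(\group)$; as recorded in \Cref{Rem:comparison-biset}, they coincide except that Bouc's relation 2.(d) is required to hold for \emph{every} pair of normal subgroups $M, N \trianglelefteq G$, not only when $M \cap N = 1$. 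I would first check that on elementary generators $\overline{\Real_\kk}$ agrees with the homonymous biset (e.g.\ $\Ind^G_H$ goes to the induction biset ${}_G G_H$, $\Infl^G_{G/N}$ to ${}_G G_{G/N}$, and so on), which is a direct coend calculation similar to Step~1.

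\textit{Step 3: Deflativity implies Bouc's stronger 2.(d).} The main point is that the extra deflativity relation, combined with the span-category relations, forces Bouc's unrestricted 2.(d). Given arbitrary $M, N \trianglelefteq G$, set $L := M \cap N$ and $M' := M/L$, $N' := N/L$ in $\bar G := G/L$, so that $M' \cap N' = 1$. Using transitivity 1.(c) we factor
\[
\Defl^G_{G/N} \circ \Infl^G_{G/M} \;=\; \Defl^{\bar G}_{G/N} \circ \bigl(\Defl^G_{\bar G} \circ \Infl^G_{\bar G}\bigr) \circ \Infl^{\bar G}_{G/M}
\]
and kill the parenthesised composite by deflativity. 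The remaining composite $\Defl^{\bar G}_{G/N} \circ \Infl^{\bar G}_{G/M}$ lives in $\bar G$ where $M' \cap N' = 1$, so the span-category 2.(d) applies and, after invoking the third isomorphism theorem to identify $\bar G/M'N' = G/MN$, yields precisely $\Infl^{G/N}_{G/MN} \circ \Defl^{G/M}_{G/MN}$. Hence Bouc's relations all hold in $\spancat_\kk(\group)/_\sim$, providing a well-defined $\kk$-linear functor $\bisetcat_\kk(\group) \to \spancat_\kk(\group)/_\sim$ sending elementary bisets to elementary spans. It is inverse to $\overline{\Real_\kk}$ on generators, hence an inverse of $\overline{\Real_\kk}$; passing to additive hulls concludes. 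The only genuinely non-trivial step is the manipulation in Step~3 relating the different intersection conditions, which is a clean application of the third isomorphism theorem once the right factorization is in place.
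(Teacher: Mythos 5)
Your proposal is correct, and its overall architecture — reduce to groups via additive hulls, match generators, compare the presentation of \Cref{Thm:pres-Span} with Bouc's, and show that deflativity upgrades the restricted relation 2.(d) to Bouc's unrestricted one — is exactly the paper's strategy; the genuinely different ingredient is your Step 3. The paper derives Bouc's 2.(d) by forming the pullback $P=G/M\times_{G/MN}G/N$, noting that the comparison map $G\to P$ is surjective, and applying the relation $[P\leftarrow G\rightarrow P]\sim\id_P$ in its ``all surjections'' form, computing $b^*a_*=\tilde a_*\tilde b^*=\tilde a_*p_*p^*\tilde b^*=(\tilde ap)_*(\tilde bp)^*$. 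You instead pass through $\bar G=G/(M\cap N)$: transitivity 1.(c) inserts $\Defl^G_{\bar G}\circ\Infl^G_{\bar G}$, deflativity for $M\cap N$ removes it, and the span-category 2.(d) applies in $\bar G$ since $M/(M\cap N)$ and $N/(M\cap N)$ intersect trivially, with the third isomorphism theorem supplying the identifications. Since $P\cong G/(M\cap N)$, the two arguments are essentially the same computation in different clothing; yours only ever invokes deflativity for honest normal subgroups (no surjection form, no appeal to \Cref{Rem:pullbacks-and-2d}), at the mild cost of carrying the canonical identifications $(G/L)/(M/L)\cong G/M$, which the paper's conventions permit. Your Step 1 (the coend computation showing $\cat R[Q\leftarrow G\rightarrow Q]\cong{}_QQ_Q$) is a correct direct verification that the paper does not actually need, since in the presentation approach the factorization is automatic (deflativity is Bouc's 2.(d) with $M=N$). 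Two small points to make explicit in a polished write-up: the theorem also asserts that the surjection-form relations and the deflativity relations generate the same ideal, and you silently switch between the two (Step 1 versus Step 3), so record the one-line equivalence $[Q\xleftarrow{p}G\xrightarrow{p}Q]=p_*p^*$ with $p$ isomorphic to a quotient map; and when constructing the inverse functor from Bouc's presentation, note that all his relation families other than 2.(d) hold verbatim in the quotient because they are already span relations.
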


Before proving the theorem, let us immediately record:

\begin{Cor} 
\label{Cor:Nakaoka-embedding}%
Precomposition with the functor $\Real_\kk$ of \Cref{Cor:kklinear-Re} induces a fully faithful embedding of functor categories
\[
\Real_\kk^*\colon
\Fun_\kk (\bisetcat_\kk(\groupoid), \kk\MMod)
\hookrightarrow
\Fun_\kk (\spancat_\kk(\groupoid) , \kk\MMod)
\]
which identifies biset functors (\Cref{Def:biset-functor}) as a full reflexive $\kk$-linear subcategory of global Mackey functors on~$\gpd$ (\Cref{Exa:gpd}).
The essential image of the embedding consists precisely of the Mackey functors  $M$ with the property that $M([Q\leftarrow G \rightarrow Q]) = \id_{M(Q)}$  for every surjective group homomorphism $G\to Q$.
\end{Cor}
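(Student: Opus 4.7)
The strategy is to deduce the corollary directly from Theorem \ref{Thm:biset-as-span-quot} by applying the general machinery of functor categories over quotients by two-sided $\kk$-linear ideals. By the theorem, after identifying $\bisetcat_\kk(\gpd)$ with $\spancat_\kk(\gpd)/{\sim}$, the functor $\Real_\kk$ becomes the canonical quotient functor
\[
q\colon \spancat_\kk(\gpd) \longrightarrow \spancat_\kk(\gpd)/{\sim}
\]
which is the identity on objects and which is the canonical surjection $q_{X,Y}\colon \spancat_\kk(\gpd)(X,Y)\twoheadrightarrow \spancat_\kk(\gpd)(X,Y)/\sim$ on each Hom $\kk$-module. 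The first step is then to invoke the following general fact: for any $\kk$-linear functor $q\colon\cat C\to \cat D$ between (essentially small) $\kk$-linear categories which is surjective on objects and surjective on each Hom module, the precomposition functor $q^*\colon \Fun_\kk(\cat D,\kk\MMod)\to \Fun_\kk(\cat C,\kk\MMod)$ is fully faithful, with essential image precisely those $\kk$-linear $F\colon \cat C\to \kk\MMod$ which kill the ideal $\ker(q)$, equivalently which send every generator of that ideal to zero.

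For fully faithfulness, given two biset functors $F_1,F_2$ and a natural transformation $\eta\colon q^*F_1\to q^*F_2$, the components $\eta_X$ automatically assemble into a natural transformation $F_1\to F_2$ because the naturality squares in $\cat D$ involve morphisms which, Hom-module-wise, are images under $q$ of morphisms in $\cat C$, so naturality in $\cat C$ already forces naturality in $\cat D$. Conversely any natural transformation $F_1\to F_2$ has a well-defined image under $q^*$. For the essential image, a $\kk$-linear $F\colon \cat C\to \kk\MMod$ factors through $q$ (uniquely) iff it vanishes on $\ker(q)$, iff it vanishes on a set of $\kk$-linear generators of the ideal. Applied to our setting, the generators of $\sim$ provided by Theorem \ref{Thm:biset-as-span-quot} are precisely the elements $[Q\xleftarrow{p} G\xrightarrow{p} Q]-\id_Q$ for every surjection $p\colon G\twoheadrightarrow Q$, so that the essential image is the subcategory of Mackey functors $M$ satisfying $M([Q\leftarrow G\rightarrow Q])=\id_{M(Q)}$ for every such~$p$, as claimed.

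For reflectivity, since the target $\kk\MMod$ is a cocomplete $\kk$-linear category, the functor $q^*=\Real_\kk^*$ has a left adjoint given by the left Kan extension $\mathrm{Lan}_q$ along~$q$; this is a standard consequence of the pointwise construction of Kan extensions using colimits in $\kk\MMod$. This left adjoint is the desired reflector onto the subcategory of biset functors. Concretely, it sends a global Mackey functor $M$ to the biset functor $\bar M$ obtained by iteratively quotienting each $M(Q)$ by the submodule generated by all elements of the form $M([Q\leftarrow G\rightarrow Q])(m)-m$ and propagating these identifications through all transfer maps until the deflativity relation holds universally; this can be packaged as a coequalizer in the category of Mackey functors, or equivalently as the pointwise colimit formula for $\mathrm{Lan}_q$. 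The main (but straightforward) point to check is that the ideal $\sim$, and hence the precomposition $q^*$, interacts well with the Hom-module quotients: this is essentially the content of the $\kk$-linearization universal property already employed in \Cref{Cons:kk-linearization} and \Cref{Rem:various-cats-of-biset-functors}, so there is no genuine obstacle, only bookkeeping.
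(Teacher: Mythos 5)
Your proposal is correct and takes essentially the same route as the paper: the characterization of the essential image is read off from \Cref{Thm:biset-as-span-quot} (a $\kk$-linear functor kills the ideal iff it kills its generators, i.e.\ satisfies deflativity), full faithfulness of $\Real_\kk^*$ follows from $\Real_\kk$ being full and (here, bijective) surjective on objects, and reflectivity is obtained from the $\kk$-linear left Kan extension along $\Real_\kk$, exactly as in the paper's proof. The only caveat is your parenthetical ``iteratively quotienting'' description of the reflector, which is dispensable and slightly imprecise (the reflection of $M$ is the single quotient of $M$ by the subfunctor generated by the images of the morphisms in the ideal); the Kan-extension argument alone already establishes the adjoint, so this does not affect correctness.
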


\begin{proof}
The characterization of the image is immediate from \Cref{Thm:biset-as-span-quot}.

The rest follows by standard arguments.
Using that $\Real_\kk$ is full and surjective on objects, one verifies immediately that it induces a fully faithful functor between functor categories.
Its image is a reflective subcategory, \ie the inclusion functor admits a left adjoint. In fact, this left adjoint  (the `reflection') is provided (after composing with~$\Real_\kk^*$) by the $\kk$-linear left Kan extension along~$\Real_\kk$, \ie by the unique colimit-preserving functor sending a representable Mackey functor $\spancat_\kk(\groupoid)(G, -)$ to the corresponding representable biset functor~$\bisetcat_\kk(\groupoid)(G,-)$. 
\end{proof}

\begin{Rem} \label{Rem:Nakaoka-on-bisets-vs-spans}
\Cref{Cor:Nakaoka-embedding} contains the main result of \cite{Nakaoka16}. Nakaoka calls \eqref{eq:deflative_equation} the \emph{deflativity condition} and the Mackey functors arising this way from biset functors \emph{deflative Mackey functors}.
\end{Rem}

In order to prove \Cref{Thm:biset-as-span-quot}, it will suffice to compare suitable presentations of the two categories. In fact, the realization pseudo-functor $\cat R$ of \Cref{Thm:Huglo_bifunctor} is not really necessary for the proof: once the two presentations are established, one can see that the functor of \Cref{Cor:kklinear-Re} must exist for formal reasons. However, it is nice to know that the comparison of spans and bisets comes from such a natural construction as~$\cat R$, whose 2-categorical nature fits well in this article's philosophy. 

\begin{Rem}[A presentation of the biset category] \label{Rem:elementary_bisets}
We recall from \cite[\S2.3]{Bouc10} that every $G,H$-biset between finite groups decomposes as a coproduct of \emph{transitive} $G,H$-bisets (\ie those which are indecomposable with respect to coproducts), and every transitive biset ${}_GX_H\colon H\to G$ is isomorphic to a unique horizontal composite of the form:
\begin{equation} \label{eq:can-form-bisets}
\xymatrix{
G &
D \ar[l]_-{\Ind_D^G} &
D/C \ar[l]_-{\Infl_{D/C}^D} &
B/A \ar[l]_-{\Iso(f)}^-{\sim} &
B \ar[l]_-{\Defl^B_{B/A}} &
H \ar[l]_-{\Res^H_B}
}
\end{equation}
Here $A\unlhd B\leq H$ and $C\unlhd D\leq G$ are subgroups, with $A$ and $C$ normal in $B$ and~$D$, respectively, $f\colon B/A\stackrel{\sim}{\to}D/C$ is an isomorphism of groups, and the notations refer to the following five kinds of \emph{elementary bisets} (\cite[\S2.3.9]{Bouc10}):
\begin{itemize}
\item \emph{Isomorphisms:} $\Iso(f):= {}_GG_{G'}$, with $G$ acting on itself on the left and $G'$ acting on $G$ by right multiplication via a group isomorphism $f\colon G'\stackrel{\sim}{\to} G$. 
\item \emph{Restrictions:} $\Res^G_H:= {}_HG_G$, defined whenever $H$ is a subgroup of~$G$.
\item \emph{Inductions:} $\Ind_H^G:= {}_GG_H$, again for $H$ a subgroup of~$G$.
\item \emph{Deflations:} $\Defl^G_{G/N}:= {}_{G/N}(G/N)_G$, for any normal subgroup $N$ of~$G$.
\item \emph{Inflations:} $\Infl_{G/N}^G:= {}_{G}(G/N)_{G/N}$, again for a normal subgroup $N$ of~$G$.
\end{itemize}
In particular, the morphisms of the $\kk$-linear category $\bisetcat_\kk(\group)$ are generated by the (isomorphism classes of) elementary bisets.
As already mentioned in \Cref{Rem:comparison-biset}, Bouc also provides a full list of relations for the elementary bisets, and therefore a presentation of $\bisetcat_\kk(\group)$ as a $\kk$-linear category (cf.\ \cite[\S\,3.1]{Bouc10}). Bouc's list is nearly identical with that in \Cref{Thm:pres-Span}, the only difference being that Bouc's relation set 2.(d) does not require the condition ``$M\cap N=1$''.
\end{Rem}

\begin{Rem} \label{Rem:comparison-of-elementary-generators}
It is immediate to verify that the functor $\Real_\kk$ maps each elementary span of \Cref{Not:generators} to the homonymous elementary biset of \Cref{Rem:elementary_bisets}. Just remember that, in our conventions, a span $[G \leftarrow S \rightarrow H]$ is read left-to-right while a biset ${}_HU_G$ is read right-to-left, \ie they both stand for a map from $G$ to~$H$.
\end{Rem}

\begin{proof}[Proof of \Cref{Thm:biset-as-span-quot}]
In view of \Cref{Cor:kklinear-Re}, it only remains to identify the kernel on morphisms of the $\kk$-linear functor $\Real_\kk\colon \spancat_\kk(\gpd)\to \bisetcat_\kk(\gpd)$.
In fact, since the two inclusions $\spancat_\kk(\group) \subset \spancat_\kk(\gpd)$ and $\bisetcat_\kk(\group) \subset \bisetcat_\kk(\gpd)$ are additive hulls (by \Cref{Lem:add-closure-spans} and \Cref{Lem:biset-add} respectively), we only need to look at the restriction of $\Real_\kk$ to a functor $\spancat_\kk(\group)\to \bisetcat_\kk(\group)$ between groups.

Let us begin by noting that, since in $\spancat_\kk(\group)$ we have the equality
\[
[G = G \stackrel{p\;}{\rightarrow} Q] \circ [Q \stackrel{\;p}{\leftarrow} G= G] = [Q \stackrel{\;p}{\leftarrow} G \stackrel{p\;}{\rightarrow} Q]
\]
and since every surjective homomorphism $G \stackrel{p\;}{\to} Q$ is isomorphic to one of the form $G\to G/N$,  the relations \eqref{eq:deflative_equation} are evidently equivalent to the family of relations
\begin{align*}
[Q \overset{\;p}{\leftarrow} G \overset{p\;}{\rightarrow} Q] \sim \id_{Q} 
\end{align*}
for all surjective group homomorphisms $p\colon G\to Q$, as stated.

Now we compare the presentation of $\spancat_\kk(\group)$ given in \Cref{Thm:pres-Span} and Bouc's presentation of $\bisetcat_\kk(\group)$ recalled in \Cref{Rem:elementary_bisets}.

As already mentioned in \Cref{Rem:comparison-of-elementary-generators}, the functor $\Real_\kk$ maps generators (the elementary spans) to the homonymous generators (the elementary bisets), and the only difference between the two presentations lies within the relation family~2.(d). 
Hence it only remains to prove that the relations \eqref{eq:deflative_equation}, together with all the relations of the span category (as in \Cref{Thm:pres-Span}), imply the following:

\begin{enumerate}[\rm1.]
\item[] \emph{Bouc's 2.(d):}
For any two normal subgroups $M,N\trianglelefteq G$ we have
\[
\Defl^G_{G/N} \circ \Infl^G_{G/M} = \Infl^{G/N}_{G/NM} \circ \Defl^{G/M}_{G/NM}  \,.
\]
\end{enumerate} 
 
To this end, let $M,N$ be any two normal subgroups of a group~$G$, and consider the following diagram of group homomorphisms, where the outer square consists of the quotient maps and the inner square is a pullback:
\[
\xymatrix@C=14pt@R=16pt{
&  G \ar[d]_p \ar@/_3ex/@{->>}[ddl]_{\overline b} \ar@/^3ex/@{->>}[ddr]^{\overline a} & \\
& P \ar@{->>}[dr]^{\tilde a} \ar@{->>}[dl]_{\tilde b} & \\
G/M   \ar@{->>}[dr]_a && G/N \ar@{->>}[dl]^b \\
& G/MN&
}
\]
One verifies easily that the comparison map $p\colon G\to P$ is also surjective, hence by \eqref{eq:deflative_equation} we get the relation $p_*p^* = \id_P$; here and below we use the short-hand notations $(-)_*$ and $(-)^*$ as in \Cref{Rem:*notation}. Now we compute as follows, using also the basic functoriality of $(-)_*$ and $(-)^*$ afforded by the relations 0.(a) and~1.(c):
\begin{align*}
b^*a_* 
 &\;=\; \tilde a_* \tilde b^* && \textrm{by \Cref{Rem:pullbacks-and-2d} and 2.(d) for spans} \\
 &\;=\; \tilde a_* \,\id_P\, \tilde b^* && \\
 &\;=\; \tilde a_* p_*p^* \tilde b^* && \textrm{by \eqref{eq:deflative_equation} for }p \\
 &\;=\; (\tilde a p)_* (\tilde b p)^* && \\
 &\;=\; (\overline a)_* (\overline b)^* &&
\end{align*}
This proves Bouc's relation~2.(d), and concludes the proof of the theorem.
\end{proof}

\begin{Rem} \label{Lem:Bouc_simplified}
The argument of the above proof shows in fact that, in the presence of his other relation families, Bouc's relations of type 2.(d) for bisets, \ie the relations
\[ \Defl^G_{G/N} \circ \Infl^G_{G/M} = \Infl^{G/N}_{G/NM} \circ \Defl^{G/M}_{G/NM} \quad \quad \textrm{ for all } N,M \unlhd G \,,\]
follow already from the special case with $N=M$.
\end{Rem}

\begin{center}$***$\end{center}

In the remainder of this last section we consider a couple of frequently used variants of biset functors and show that the analogue of \Cref{Thm:biset-as-span-quot} provides for each of them an \emph{equivalence} with the corresponding notion of Mackey functors.

\begin{Ter} \label{Ter:free_bifree}
Recall that (for groups $G$ and $H$) a $G,H$-biset ${}_GU_H$ is \emph{right-free} if $H$ acts freely on~$U$, \emph{left-free} if $G$ acts freely, and \emph{bifree} if both actions are free. Of the five elementary kinds of bisets in Remark~\ref{Rem:elementary_bisets}, we see that isomorphisms, restrictions and inductions are bifree; inflations are only right-free; and deflations are only left-free (unless of course, for the latter two kinds, we are in the degenerate case $N=1$).
Both right-free and bifree bisets form classes closed under horizontal composition, so we can consider the 2-full sub-bicategories containing them.
\end{Ter}

\begin{Not}
\label{Not:free-bifree}%
In a way which is hopefully self-explanatory, we may write
\[
\biset^\rfree\!(\group), \quad
\biset^\bifree\!(\group) \quad\quad \textrm{and} \quad\quad
\biset^\rfree\!(\groupoid), \quad
\biset^\bifree\!(\groupoid)
\]
for the 2-full sub-bicategories of $\biset(\group)$ and $\biset(\groupoid)$, respectively, where only right-free (``$\rfree$''), or bifree (``$\bifree$''), bisets are allowed as 1-cells. (These notations are used at the end of the Introduction.) We will be interested in their $\kk$-linearized 1-truncations, for which we use the following notations:
\[
\bisetcatrf(\group), \quad
\bisetcatbif(\group) \quad\quad \textrm{and} \quad\quad
\bisetcatrf(\groupoid), \quad
\bisetcatbif(\groupoid) \,.
\]
\end{Not}

\begin{Rem} \label{Rem:biset-var-generation}
Clearly, both $\bisetcatrf(\group)\subset \bisetcatrf(\groupoid)$ and $\bisetcatbif(\group) \subset \bisetcatbif(\groupoid)$  are again the inclusion of a $\kk$-linear category in its additive hull. By construction, moreover, they are generated as $\kk$-linear categories, respectively as additive $\kk$-linear categories, by the following elementary bisets: 
\begin{itemize}
\item $\bisetcatrf(\group)$ and $\bisetcatrf(\groupoid)$: by isomorphisms, restrictions, inductions and inflations (\ie deflations are not allowed).
\item $\bisetcatbif(\group)$ and $\bisetcatbif(\groupoid)$: by isomorphisms, restrictions and inductions (\ie neither inflations nor deflations are allowed).
\end{itemize}
\end{Rem}

\begin{Rem} \label{Rem:Webb's-variations}
For even more variation, we may follow \cite[\S\,8]{Webb00} and choose three classes of finite groups $\cat D, \cat X, \cat Y$, with $\cat X$ and $\cat Y$ closed under the formation of group extensions and subquotients. Such a triple defines a full subcategory of bisets where the objects are the groups in $\cat D$, and where the morphisms are those bisets whose right and left isotropy groups belong to $\cat X$ and $\cat Y$, respectively. In other words, the morphisms include all inductions, restrictions and isomorphisms between the available groups of~$\cat D$, but only the inflations along quotient homomorphisms with kernel in $\cat X$ and only deflations for those with kernel in~$\cat Y$.
For instance, right-free bisets correspond to choosing $\cat X=\{1\}$ and $\cat Y=\{\textrm{all groups}\}$ and bifree bisets to $\cat X=\cat Y=\{1\}$.

For each triple $(\cat D,\cat X,\cat Y)$, the $\kk$-linear functors on the associated biset category is equivalent to Webb's category $\Mackey_\kk^{\cat X,\cat Y}\!(\cat D)$ of \emph{globally defined Mackey functors}.
Thus  the latter can be identified with a particular kind of biset functors. Hence, by the arguments of this section, they can be seen as a (full subcategory of a) kind of generalized Mackey functors. We leave the details to the interested reader and only treat here the above two chosen special cases.
\end{Rem}

\begin{Thm} \label{Thm:no-deflations}
The realization pseudo-functor of \Cref{Thm:Huglo_bifunctor} induces 
two isomorphisms of $\kk$-linear categories 
\[
\spancat_\kk(\groupoid;\groupoidf)
\overset{\sim}{\too}
\bisetcatrf(\groupoid)
\quad \textrm{ and } \quad
\spancat_\kk(\groupoidf; \groupoidf)
\overset{\sim}{\too}
\bisetcatbif(\groupoid)
\]
which identify the right-free biset category and the bifree biset category with suitable categories of spans of groupoids (see Examples~\ref{Exa:gpdf} and~\ref{Exa:gpd-gpdf}).
\end{Thm}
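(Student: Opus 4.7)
The plan is to parallel the proof of Theorem~\ref{Thm:biset-as-span-quot} and exploit the fact that the sole source of discrepancy between spans and bisets there, namely the deflativity relation $\Defl\circ\Infl\sim\id$, disappears once deflations (resp.~both deflations and inflations) are excluded from the allowed morphisms.

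First, I would check that the realization pseudo-functor $\cat R$ of Theorem~\ref{Thm:Huglo_bifunctor} restricts to well-defined pseudo-functors
$$\cat R\colon \Span(\groupoid;\groupoidf) \longrightarrow \biset^{\rfree}(\groupoid) \quad\textrm{and}\quad \cat R\colon \Span(\groupoidf) \longrightarrow \biset^{\bifree}(\groupoid).$$
For a span $[H \xleftarrow{b} S \xrightarrow{a} G]$ between groups the realised biset is $G \otimes_S H$ with right $H$-action by multiplication on the $H$-factor; a direct calculation using the coend relation $(g a(s), h)\sim(g, b(s) h)$ shows that the stabiliser of $[g,h]$ under this action is $h^{-1}b(\Ker a)h$, which is trivial whenever $a$ is faithful, and symmetrically for the left $G$-action and~$b$. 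The groupoid case reduces to the group case after passing to connected components and choosing skeletons. Applying $\pih$ and $\kk$-linearisation yields the desired $\kk$-linear functors $\Real_\kk$.

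Second, both the source and the target of each claimed isomorphism are the additive hulls of their full subcategories on finite groups, by Lemmas~\ref{Lem:add-closure-spans} and~\ref{Lem:biset-add}. Since $\Real_\kk$ is the identity on objects, it suffices to prove bijectivity of the induced maps on Hom $\kk$-modules between finite groups. For this I would compare presentations on both sides. Inspecting the canonical form~\eqref{eq:can-form} arising in the proof of Theorem~\ref{Thm:pres-Span}, faithfulness of the right leg $a$ of a span of groups forces $M = \Ker(a) = 1$, so by relation 0.(a) the factor $\Defl^S_{S/M}$ becomes the identity; if additionally $b$ is faithful, then also $N = \Ker(b) = 1$ and the $\Infl^S_{S/N}$ factor likewise disappears. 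The remainder of that proof goes through verbatim in the restricted setting, so the full subcategory on groups inside $\spancat_\kk(\groupoid;\groupoidf)$ (resp.~$\spancat_\kk(\groupoidf)$) admits the presentation of Theorem~\ref{Thm:pres-Span} with the generator $\Defl$ (resp.~both $\Defl$ and $\Infl$) removed, together with every relation featuring it.

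As recorded in Remark~\ref{Rem:biset-var-generation} and in \cite[\S3]{Bouc10}, the biset categories $\bisetcatrf(\group)$ and $\bisetcatbif(\group)$ admit precisely the analogous sub-presentations. Since the \emph{only} discrepancy between Bouc's relations and ours in Theorem~\ref{Thm:pres-Span} is relation~2.(d), which mentions both $\Infl$ and $\Defl$ and so is absent from both sub-presentations, the two presentations coincide literally; the identification is realised by $\Real_\kk$ thanks to Remark~\ref{Rem:comparison-of-elementary-generators}, yielding the claimed isomorphisms. The main obstacle will be the first step: while the freeness of the realised biset is a routine coset computation for spans of groups, its extension to groupoids requires some care with the coend $G(a-,-)\otimes_S H(-,b-)$, tracking how faithfulness of $a$ (equivalently, injectivity of each $\Aut_S(s)\hookrightarrow \Aut_G(a(s))$) forces trivial isotropy of the right $H$-action on $\cat R[H\xleftarrow{b}S\xrightarrow{a}G]$. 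Once this is in place, the rest is a bookkeeping exercise directly building on Theorems~\ref{Thm:pres-Span} and~\ref{Thm:biset-as-span-quot}.
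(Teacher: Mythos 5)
Your proposal is correct and follows essentially the same route as the paper: restrict the realization to the right-free/bifree settings, reduce to groups via additive hulls, and conclude by matching the two presentations, observing that the only discrepancy, relation 2.(d), involves deflations (and inflations) and hence disappears on both sides. The only real deviations are cosmetic: you justify that $\cat R$ restricts by a direct coend/stabilizer computation (the paper instead deduces it from the decomposition into elementary spans and \Cref{Rem:comparison-of-elementary-generators}), and where you say the faithfulness argument of \Cref{Thm:pres-Span} ``goes through verbatim'', the paper makes explicit the small point you leave implicit, namely that since $\groupoidf\subset\gpd$ is 2-full and contains all equivalences, equivalence of spans in the restricted categories agrees with equivalence in $\spancat_\kk(\gpd)$, so the uniqueness of canonical forms up to compatible isomorphisms (and the retained commutation relations with $\Iso$'s) indeed transfers.
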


Therefore, once again, we can subsume the corresponding notion of functors under our generalized Mackey functors:

\begin{Cor} 
\label{Cor:no-deflations}
Precomposition with the functor of \Cref{Thm:no-deflations} induces equivalences (in fact isomorphisms) of functor categories
\[
\Fun_\kk (\bisetcatrf(\groupoid), \kk\MMod)
\overset{\sim}{\too}
\Fun_\kk (\spancat_\kk(\groupoid;\groupoidf) , \kk\MMod)
\]
and
\[
\Fun_\kk (\bisetcatbif(\groupoid), \kk\MMod)
\overset{\sim}{\too}
\Fun_\kk (\spancat_\kk(\groupoidf; \groupoidf) , \kk\MMod)
\]
which identify right-free and bifree biset functors with certain categories of generalized Mackey functors.
\qed
\end{Cor}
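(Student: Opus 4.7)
The strategy for both theorem and corollary is to compare presentations of the span and biset sides, largely available from the earlier results of the paper. As a first step, I verify that $\Real_\kk$ of \Cref{Cor:kklinear-Re} restricts to the claimed sub-categories. Since $\spancat_\kk(\groupoid;\groupoidf)$ and $\bisetcatrf(\groupoid)$ are both additive hulls of their full subcategories on finite groups (by \Cref{Lem:add-closure-spans} adapted to the pair $(\groupoid;\groupoidf)$, and by \Cref{Lem:biset-add} restricted to right-free bisets), it suffices to reason on morphisms between groups, and analogously in the bifree case. Writing a span of groups in the canonical form~\eqref{eq:can-form}, the requirement that the forward leg be faithful forces its kernel to be trivial, so the $\Defl$ factor drops out; likewise requiring both legs to be faithful further kills the $\Infl$ factor. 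By \Cref{Rem:comparison-of-elementary-generators}, $\Real_\kk$ sends the surviving elementary spans $\Iso,\Res,\Ind,\Infl$ (resp.\ $\Iso,\Res,\Ind$) to the homonymous elementary bisets, which are all right-free (resp.\ bifree) in the sense of \Cref{Ter:free_bifree}; closure of right-freeness (resp.\ bifreeness) under horizontal composition ensures that the restricted functors take values in $\bisetcatrf(\groupoid)$ and $\bisetcatbif(\groupoid)$, respectively.

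The heart of the argument is to check that the restricted functors are bijective on Hom modules. The proof of \Cref{Thm:pres-Span} restricts \emph{verbatim} to yield presentations of $\spancat_\kk(\groupoid;\groupoidf)$ and $\spancat_\kk(\groupoidf;\groupoidf)$ on groups, by the sub-list of elementary spans and by the sub-list of relations from \Cref{Thm:pres-Span} that involve only the remaining generators. Indeed, the reduction of an arbitrary composable word of elementary spans to the form~\eqref{eq:can-form} invokes only relations that remain expressible after removing $\Defl$ (respectively $\Defl$ and $\Infl$), and the uniqueness argument via the diagram~\eqref{eq:standard-form-almost-uniqueness} goes through unchanged once $M=1$ (resp.\ $M=N=1$). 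On the biset side, Bouc's presentation of $\bisetcat_\kk(\group)$ restricts similarly to presentations of $\bisetcatrf(\group)$ and $\bisetcatbif(\group)$. As emphasised in \Cref{Rem:comparison-biset}, Bouc's presentation differs from that of \Cref{Thm:pres-Span} only in relation~2.(d); but both sides of this lone discrepancy involve $\Defl$, which is absent from our sub-categories. Hence the two restricted presentations coincide on the nose, and $\Real_\kk$ (bijecting generators with generators and preserving all the common relations) induces isomorphisms on Hom modules.

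Assembling the two steps gives the desired isomorphisms of $\kk$-linear categories, and \Cref{Cor:no-deflations} follows immediately by applying $\Fun_\kk(-,\kk\MMod)$ to each. The main obstacle is the bookkeeping involved in verifying that every reduction step in the proof of \Cref{Thm:pres-Span} remains expressible purely in terms of the surviving generators; but this is routine by inspection, since each reduction step merely permutes one kind of elementary span past another and in particular never introduces a forbidden $\Defl$ out of thin air.
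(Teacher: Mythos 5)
Your proposal is correct and follows essentially the same route as the paper: the corollary is obtained, exactly as in the paper, by applying $\Fun_\kk(-,\kk\MMod)$ to the isomorphisms of \Cref{Thm:no-deflations}, and your proof of those isomorphisms (restricting $\Real_\kk$, then matching the presentations of \Cref{Thm:pres-Span} and of Bouc after discarding the deflation, resp.\ deflation and inflation, generators, so that the lone divergent relation 2.(d) disappears) is the paper's own argument for that theorem. The only point you gloss, which the paper spells out, is that the restricted span categories embed faithfully into $\spancat_\kk(\gpd)$ (because $\groupoidf\subset\gpd$ is 2-full and contains all equivalences), which is what licenses quoting the uniqueness-up-to-isomorphism of the canonical form \eqref{eq:can-form} verbatim in the subcategories.
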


\begin{proof}[Proof of \Cref{Thm:no-deflations}]
It is an immediate consequence of Remarks~\ref{Rem:elementary_bisets} and~\ref{Rem:comparison-of-elementary-generators} that the realization pseudo-functor $\cat R$ of Theorem~\ref{Thm:Huglo_bifunctor} restricts to pseudo-functors
\[
\Span(\groupoid;\groupoidf) \stackrel{\cat R}{\longrightarrow} \biset^\rfree\!(\groupoid)
\quad \textrm{ and } \quad
\Span(\groupoidf; \groupoidf) \stackrel{\cat R}{\longrightarrow} \biset^\bifree\!(\groupoid) \,.
\]
Indeed, on the side of spans, limiting right (resp.\ right and left) legs to faithful functors precisely eliminates the elementary spans $G = G \onto G/N$ (resp.\ both $G = G \onto G/N$ and $G/N \twoheadleftarrow G = G$) from the set of generators. 

The two 1-truncated functors $\pih \cat R$ are full, like the one of \Cref{Thm:Huglo_bifunctor}, and so are the induced $\kk$-linear functors
\begin{equation} \label{eq:wanna-be-isos-rf-bif}
\spancat_\kk(\groupoid;\groupoidf)
\too
\bisetcatrf(\groupoid)
\quad \textrm{ and } \quad
\spancat_\kk(\groupoidf; \groupoidf)
\too
\bisetcatbif(\groupoid)
\end{equation}
both of which will still be denoted by $\Real_\kk$.
It only remains to see that these two functors are faithful, and we will do so by comparing two presentations, as in the proof of \Cref{Thm:biset-as-span-quot}.

To this end, we make the following claim: For each of the four $\kk$-linear categories involved in~\eqref{eq:wanna-be-isos-rf-bif}, we can obtain a presentation simply by `restricting' the presentation of \Cref{Thm:pres-Span} (for the two categories of spans) or \Cref{Rem:elementary_bisets} (for the two categories of bisets). In other words, it suffices to ignore the irrelevant generators and relations and keep the rest. Thus, explicitly, the categories $\spancat_\kk(\groupoid;\groupoidf)$ and $\bisetcatrf(\groupoid)$ are generated by the elementary spans (resp.\ bisets) other than the deflations, and all relations follow from those of the families 0.-2.\ other than 1.(c), 2.(b), 2.(d) and some of the ones in 0.(a) and 2.(e-f) (because they involve deflations). Similarly, $\spancat_\kk(\groupoidf; \groupoidf)$ and $\bisetcatbif(\groupoid)$ are generated by the elementary spans/bisets other than inflations and deflations, with relations determined by the relations 0.-2.\ other than 1.(c), 2.(b), 2.(d-f) and some of those in 0.(a).
Since in both cases we are led to ignore the relations of type 2.(d), the corresponding presentations for spans and bisets now look identical, from which it follows that the two functors $\Real_\kk$ of~\eqref{eq:wanna-be-isos-rf-bif} are indeed $\kk$-linear isomorphisms.

To see why the above claim on presentations is true, first notice that in each of the four categories the retained generators \emph{do} generate, by construction. Moreover, it is immediate to see that in each case the retained relations still allow us to commute or fuse any pair of the remaining generators, hence they still let us reduce an arbitrary string of generators to a linearly independent finite sum of short strings in the length-six canonical form of \eqref{eq:can-form}
\[
\Ind^G_B \circ \Iso(f) \circ \Defl^S_{S/M} \circ \Infl^S_{S/N} \circ \Iso(\ell^{-1}) \circ \Res^H_D \quad \quad \textrm{ (for spans)}
\]
respectively in the length-five canonical form of \eqref{eq:can-form-bisets} 
\[
\Ind_D^G \circ \Infl_{D/C}^D \circ \Iso(f) \circ \Defl^B_{B/A} \circ \Res^H_B
\quad\quad \textrm{(for bisets)}
\]
(of course, deflations resp.\ deflations and inflations are now absent from both forms).
For bisets, the length-five canonical form of a string (or of an $\sqcup$-irreducible biset) is unique, hence any set of relations that allows us to bring each string of generators to its canonical form is sufficient to determine all relations that hold within the ambient category $\bisetcat_\kk(\groupoid)$. 

For spans, the argument is slightly subtler. First  notice that we may consider $\spancat_\kk(\gpd; \groupoidf)$ and $\spancat_\kk(\groupoidf; \groupoidf)$ to be (non-full) subcategories of $\spancat_\kk(\gpd)$, because the morphisms of pairs $(\gpd; \groupoidf)\to (\gpd; \gpd)$ and $(\groupoidf; \groupoidf)\to (\gpd;\gpd)$ induce faithful functors on span categories. This is because $\groupoidf\subset \gpd$ is a 2-full 2-subcategory containing all equivalences, hence the data of any equivalences between spans with one or two faithful legs is already available in $(\gpd; \groupoidf)$, resp.\ in $(\groupoidf; \groupoidf)$.
Now in the ambient category $\spancat_\kk(\gpd)$, as we have seen in the proof of \Cref{Thm:pres-Span}, the above length-six canonical form of a string (or of an $\sqcup$-irreducible span) is only unique up to changing $(D,S,N,M,B,f,\ell)$ by a bunch of compatible isomorphisms, as in~\eqref{eq:standard-form-almost-uniqueness}; but all elementary isomorphisms are still available in the two subcategories, as well as the commutativity relations between isomorphisms and the other retained generators. Thus, once again, we conclude that the retained families of relations allow us to determined all relations between strings as they hold in $\spancat_\kk(\gpd)$.

This concludes the proof.
\end{proof}

\begin{Rem} \label{Rem:Miller}
Miller \cite{Miller17} offers a different approach to \Cref{Thm:no-deflations}, already at the bicategorical level.
More precisely, the main result of \emph{loc.\,cit.\ }is an explicit biequivalence $\mathbb B \simeq \mathbb C$ between the `Burnside bicategory of groupoids'~$\mathbb B$ and the `bicategory of correspondences' ($=$~nice spans)~$\mathbb C$. Infinite groupoids are allowed in $\mathbb B$ and~$\mathbb C$, but if we restrict attention to finite groupoids we easily find biequivalences 
\[ 
(\mathbb B|_{\mathsf{fin}} )^\op\simeq \biset^\rfree\!(\gpd)_\mathsf{core}
\quad \textrm{ and } \quad
(\mathbb C|_{\mathsf{fin}})^\op \simeq \Span(\gpd; \groupoidf)_\mathsf{core} \,.
\]
Here $(\ldots)|_{\mathsf{fin}}$ denotes the 2-full bicategory of finite groupoids and finite bisets, resp.\ of spans of finite groupoids; the decoration $(\ldots)_\mathsf{core}$ indicates that we discard all non-invertible 2-cells in our two bicategories of  \Cref{Not:free-bifree} and \Cref{Rem:span-bicats}. 
\Cref{Thm:no-deflations} and \Cref{Cor:no-deflations} can then be deduced from Miller's result, by arguing as above.
While the first biequivalence above is immediate, the second one involves replacing every span $H \gets P \to G$ with $P\to G$ faithful with an equivalent span $H \gets \tilde P \to G$ with the property that $\tilde P \to G$ is a `weak finite cover' (see \cite[Def.\,4.1]{Miller17}). 
To this end, just replace $i$ by $\tilde i$ as in the iso-comma square
\begin{equation*}
\vcenter{
\xymatrix@C=14pt@R=14pt{
& \tilde P \ar[dl]_\simeq \ar[dr]^-{\tilde i} & \\
P \ar[dr]_i \ar@{}[rr]|{\oEcell{\sim}} && G \ar@{=}[dl] \\
&G &
}}
\end{equation*}
or, alternatively, using the functorial fibrant replacement in the canonical model structure of groupoids (or categories); see \eg \cite{Rezk99}. 
A final subtle point is that the horizontal composition of~$\mathbb C$ is defined using strict pullbacks, but since its spans have a leg which is a fibration, this agrees with our composition via iso-commas. 
\end{Rem}

\begin{Rem} \label{Rem:next-upgrade}
(Added `in proof':) Upgrading \Cref{Thm:no-deflations}, and sharpening the bicategorical image drawn at the end of the Introduction, we now know for a fact that the realization pseudo-functor~$\cat R$ actually induces biequivalences
\[
\Span(\groupoid;\groupoidf) \overset{\sim}{\longrightarrow} \biset^{\rfree}\!(\groupoid)
\quad \textrm{ and } \quad
\Span(\groupoidf) \overset{\sim}{\longrightarrow} \biset^{\bifree}\!(\groupoid) \,,
\]
without any need to first throw away the non-invertible 2-cells as per Miller's result (see \Cref{Rem:Miller}). This will appear and be put to good use in~\cite{BalmerDellAmbrogiopp}. 
\end{Rem}


\printindex
\end{document}